\newtheorem{theorem}{Theorem}[section]
\newtheorem{corollary}[theorem]{Corollary}
\newtheorem{lemma}[theorem]{Lemma}
\newtheorem{proposition}[theorem]{Proposition}
\newtheorem{remark}[theorem]{Remark}
\def\NN{\hbox{\sf I\kern-.13em\hbox{N}}}
\def\RR{\hbox{\sf I\kern-.14em\hbox{R}}}
\def\Cc{\hbox{\sf C\kern -.47em {\raise .48ex \hbox{$\scriptscriptstyle |$}}
   \kern-.5em {\raise .48ex \hbox{$\scriptscriptstyle |$}} }}
\newcommand{\be}{\begin{equation}}
\newcommand{\ee}{\end{equation}}
\newcommand{\cM}{{\mathcal M}}
\begin{document}

\title[Hadamard weighted geometric mean inequalities]{ Hadamard weighted geometric mean inequalities for the spectral and essential spectral radius of positive operators on Banach function and sequence spaces} 

\author{  Katarina Bogdanovi\'{c}$^{1}$, Aljo\v{s}a Peperko$^{2,3,*}$}
\date{\thanks{
Faculty of Mathematics$^1$,
University of Belgrade,
Studentski trg 16,
SRB-11000 Belgrade, Serbia,
email:   katarinabgd77@gmail.com \\ 
Faculty of Mechanical Engineering$^2$,
University of Ljubljana,
A\v{s}ker\v{c}eva 6,
SI-1000 Ljubljana, Slovenia,\\
Institute of Mathematics, Physics and Mechanics$^3$,
Jadranska 19,
SI-1000 Ljubljana, Slovenia \\
e-mail:   aljosa.peperko@fs.uni-lj.si \\
* Corresponding author
} \today}

\begin{abstract}
\baselineskip 7mm
We prove new inequalities for the spectral radius, essential spectral radius, operator norm, measure of noncompactness and numerical radius of Hadamard weighted geometric means of positive kernel operators on Banach function and sequence spaces.   Several inequalities appear to be new even in the finite dimensional case. 
\end{abstract}

\baselineskip 6,5mm

\maketitle

\noindent
{\it Math. Subj.  Classification (2010)}: {\it \small Primary:} 47A10;  47B65; 47A63; \\ 
\noindent {\it \small Secondary:} 46E30; 46B45; 15A42; 15A60.  \\
{\it Key words}: spectral radius, essential spectral radius, operator norm, 
 positive operators, kernel operators \\

\section{Introduction and preliminaries}

\vspace{5mm}

Let $\mu$ be a $\sigma$-finite positive measure on a $\sigma$-algebra $\cM$ of subsets of a non-void set $X$.
Let $M(X,\mu)$ be the vector space of all equivalence classes of (almost everywhere equal)
complex measurable functions on $X$. A Banach space $L \subseteq M(X,\mu)$ is
called a {\it Banach function space} if $f \in L$, $g \in M(X,\mu)$,
and $|g| \le |f|$ imply that $g \in L$ and $\|g\| \le \|f\|$. We assume that  $X$ is the carrier of $L$, that is, there is no subset $Y$ of $X$ of 
 strictly positive measure with the property that $f = 0$ a.e. on $Y$ for all $f \in L$ (see \cite{Za83}).
 
 Let $R$ denote the set $\{1, \ldots, N\}$ for some $N \in \NN$ or the set $\NN$ of all natural numbers.
Let $S(R)$ be the vector lattice of all complex sequences $(x_n)_{n\in R}$.
A Banach space $L \subseteq S(R)$ is called a {\it Banach sequence space} if $x \in S(R)$, $y \in L$
and $|x| \le |y|$ imply that $x \in L$ and $\|x\|_L \le \|y\|_L$. Observe that a Banach sequence space is a Banach function space over a measure space $(R, \mu)$, 
where $\mu$ denotes the counting measure on $R$. Denote by $\mathcal{L}$ the collection of all Banach sequence spaces
$L$ satisfying the property that $e_n = \chi_{\{n\}} \in L$ and
$\|e_n\|_L=1$ for all $n \in R$. For $L\in \mathcal{L}$ the set $R$ is the carrier of $L$.

%
 

Standard examples of Banach function spaces are Euclidean spaces,  the space $c_0$ 
of all null convergent sequences  (equipped with the usual norms and the counting measure), the
well-known spaces $L^p (X,\mu)$ ($1\le p \le \infty$) and other less known examples such as Orlicz, Lorentz,  Marcinkiewicz  and more general  rearrangement-invariant spaces (see e.g. \cite{BS88}, \cite{CR07} and the references cited there), which are important e.g. in interpolation theory.
 Recall that the cartesian product $L=E\times F$ 
of Banach function spaces is again a Banach function space, equipped with the norm
$\|(f, g)\|_L=\max \{\|f\|_E, \|g\|_F\}$.

If $\{f_n\}_{n\in \mathbb{N}} \subset M(X,\mu)$ is a decreasing sequence and
$f=\inf\{f_n \in M(X,\mu): n \in \NN \}$, then we write $f_n \downarrow f$. 
A Banach function space $L$ has an {\it order continuous norm}, if $0\le f_n \downarrow 0$
implies $\|f_n\|_L \to 0$ as $n \to \infty$. It is well known that spaces $L^p$, $1\le p< \infty$, have order continuous
norm. Moreover, the norm of any reflexive Banach function space is
order continuous. 
In particular, we will be interested in  Banach function spaces $L$ such that $L$ and its Banach dual space $L^*$ have order continuous norms. Examples of such spaces are $L^p  (X,\mu)$, $1< p< \infty$, while the space
$L=c_0\in \mathcal{L}$ 
is an example of a non-reflexive Banach sequence (function) space, such that $L$ and  $L^*=l^1\in \mathcal{L}$ have order continuous
norms.

By an {\it operator} on a Banach function space $L$ we always mean a linear
operator on $L$.  An operator $K$ on $L$ is said to be {\it positive} 
if it maps nonnegative functions to nonnegative ones, i.e., $KL_+ \subset L_+$, where $L_+$ denotes the positive cone $L_+ =\{f\in L : f\ge 0 \; \mathrm{a.e.}\}$.
Given operators $K$ and $H$ on $L$, we write $K \ge H$ if the operator $K - H$ is positive.
Recall that a positive  operator $K$ is always bounded, i.e., its operator norm
\be
\|K\|=\sup\{\|Kf\|_L : f\in L, \|f\|_L \le 1\}=\sup\{\|Kf\|_L : f\in L_+, \|f\|_L \le 1\}
\label{equiv_op}
\ee
is finite (the second equality in (\ref{equiv_op}) follows from $|Kf| \le K|f|$ for $f\in L$).  
Also, its spectral radius $r(K)$ is always contained in the spectrum.

In the special case $L= L^2(X, \mu)$ we can define the {\it numerical radius} $w(K)$ of 
a bounded operator $K$ on $L^2(X, \mu)$ by 
$$ w(K) = \sup \{ | \langle K f, f \rangle | : f \in L^2(X, \mu), \| f \|_2 = 1 \} . $$
If, in addition, $K$ is positive, then it is easy to prove that 
$$ w(K) = \sup \{ \langle K f, f \rangle  : f \in L^2(X, \mu)_+ , \| f \|_2 = 1 \} . $$
From this it follows easily that $w(K) \le w(H)$ for all positive operators $K$ and $H$ on $L^2(X, \mu)$ with $K \le H$.


An operator $K$ on a Banach function space $L$ is called a {\it kernel operator} if
there exists a $\mu \times \mu$-measurable function
$k(x,y)$ on $X \times X$ such that, for all $f \in L$ and for almost all $x \in X$,
$$ \int_X |k(x,y) f(y)| \, d\mu(y) < \infty \ \ \ {\rm and} \ \ 
   (Kf)(x) = \int_X k(x,y) f(y) \, d\mu(y)  .$$
One can check that a kernel operator $K$ is positive iff 
its kernel $k$ is non-negative almost everywhere. 

Let $L$ be a Banach function space such that $L$ and $L^*$ have order
continuous norms and let $K$ and $H$ be  positive kernel operators on $L$. By $\gamma (K)$ we denote the Hausdorff measure of 
non-compactness of $K$, i.e., 
$$\gamma (K) = \inf\left\{ \delta >0 : \;\; \mathrm{there}\;\; \mathrm{is} \;\; \mathrm{a}\;\; \mathrm{finite}\;\; M \subset L \;\;\mathrm{such} \;\; \mathrm{that} \;\; K(D_L) \subset M + \delta D_L  \right\},$$
where $D_L =\{f\in L : \|f\|_L \le 1\}$. Then $\gamma (K) \le \|K\|$, $\gamma (K+H) \le \gamma (K) + \gamma (H)$, $\gamma(KH) \le \gamma (K)\gamma (H)$ and $\gamma (\alpha K) =\alpha \gamma (K)$ for $\alpha \ge 0$. Also 
$0 \le K\le H$  implies $\gamma (K) \le \gamma (H)$ (see e.g. \cite[Corollary 4.3.7 and Corollary 3.7.3]{Me91}). Let $r_{ess} (K)$ denote the essential spectral radius of $K$, i.e., the spectral radius of the Calkin image of $K$ in the Calkin algebra. Then 
\be
 r_{ess} (K) =\lim _{j \to \infty} \gamma (K^j)^{1/j}=\inf _{j \in \NN} \gamma (K^j)^{1/j} 
\label{esslim=inf}
\ee
and $r_{ess} (K) \le \gamma (K)$. Note that (\ref{esslim=inf}) is valid for any bounded operator $K$ on a given complex Banach space $L$ (see e.g. \cite[Theorem 4.3.13]{Me91}).


It is well-known that kernel operators play a very important, often even central, role in a variety of applications from differential and integro-differential equations, problems from physics 
(in particular from thermodinamics), engineering, statistical and economic models, etc (see e.g. \cite{J82}, \cite{BP03}, \cite{LL05}, \cite{DLR13} 
and the references cited there).
For the theory of Banach function spaces and more general Banach lattices we refer the reader to the books \cite{Za83}, \cite{BS88}, \cite{AA02}, \cite{AB85}. 

Let $K$ and $H$ be positive kernel operators on $L$ with kernels $k$ and $h$ respectively,
and $\alpha \ge 0$.
The \textit{Hadamard (or Schur) product} $K \circ H$ of $K$ and $H$ is the kernel operator
with kernel equal to $k(x,y)h(x,y)$ at point $(x,y) \in X \times X$ which can be defined (in general) 
only on some order ideal of $L$. Similarly, the \textit{Hadamard (or Schur) power} 
$K^{(\alpha)}$ of $K$ is the kernel operator with kernel equal to $(k(x, y))^{\alpha}$ 
at point $(x,y) \in X \times X$ which can be defined only on some order ideal of $L$.

Let $K_1 ,\ldots, K_n$ be positive kernel operators on a Banach function space $L$, 
and $\alpha _1, \ldots, \alpha _n$ positive numbers such that $\sum_{j=1}^n \alpha _j = 1$.
Then the {\it  Hadamard weighted geometric mean} 
$K = K_1 ^{( \alpha _1)} \circ K_2 ^{(\alpha _2)} \circ \cdots \circ K_n ^{(\alpha _n)}$ of 
the operators $K_1 ,\ldots, K_n$ is a positive kernel operator defined 
on the whole space $L$, since $K \le \alpha _1 K_1 + \alpha _2 K_2 + \ldots + \alpha _n K_n$ by the inequality between the weighted arithmetic and geometric means.

A matrix $K=[k_{ij}]_{i,j\in R}$ is called {\it nonnegative} if $k_{ij}\ge 0$ for all $i, j \in R$. 
For notational convenience, we sometimes write $k(i,j)$ instead of $k_{ij}$. 
We say that a nonnegative matrix $K$ defines an operator on $L$ if $Kx \in L$ for all $x\in L$, where
$(Kx)_i = \sum _{j \in R}k_{ij}x_j$. Then $Kx \in L_+$ for all $x\in L_+$ and
so $K$ defines a {\rm positive} kernel operator on $L$.

Let us recall  the following result which was proved in \cite[Theorem 2.2]{DP05} and 
\cite[Theorem 5.1 and Example 3.7]{P06} (see also e.g. \cite[Theorem 2.1]{P17}, \cite{P18a}, 
\cite{P11}, \cite{P12}, \cite{DP16}).

\begin{theorem}
\label{good_work}
Let $K, K_1, \ldots , K_n$ and 
$\{K_{i j}\}_{i=1, j=1}^{l, n}$ 
be positive kernel operators on a Banach function space $L$.
Assume $\alpha _1$, $\alpha _2$,..., $\alpha _n$ are positive numbers  
such that $s_n=\sum_{j=1}^n \alpha _j = 1$ and define 
the  positive kernel operator $H$ on $L$ by
$$H:= \left(K_{1 1}^{(\alpha _1)} \circ \cdots \circ K_{1 n}^{(\alpha _n)}\right) \ldots \left(K_{l 1}^{(\alpha _1)} \circ \cdots \circ K_{l n}^{(\alpha _n)} \right).$$

\noindent (i) Then the following inequalities hold for $\rho \in \{\|\cdot\|, r\}$:

\be
 \rho(K_1 ^{( \alpha _1)} \circ K_2 ^{(\alpha _2)} \circ \cdots \circ K_n ^{(\alpha _n)} ) \le
\rho(K_1)^{ \alpha _1} \, \rho(K_2)^{\alpha _2} \cdots \rho(K_n)^{\alpha _n} ,
\label{gl1vecr}
\ee
\be
\label{basic2}
H \le  
(K_{1 1} \cdots  K_{l 1})^{(\alpha _1)} \circ \cdots 
\circ (K_{1 n} \cdots K_{l n})^{(\alpha _n)} , \\
\ee

\begin{eqnarray}
\rho \left(H \right)   &\le &  
\nonumber
\rho \left( (K_{1 1} \cdots  K_{l 1})^{(\alpha _1)} \circ \cdots 
\circ (K_{1 n} \cdots K_{l n})^{(\alpha _n)} \right) \\
&\le  &
\rho \left( K_{1 1} \cdots  K_{l 1} \right)^{\alpha _1} \cdots 
\rho \left( K_{1 n} \cdots K_{l n}\right)^{\alpha _n} .
\label{very_good}
\end{eqnarray}

If, in addition, $L$ and $L^*$ have order continuous norms, 
then  (\ref{gl1vecr}) and (\ref{very_good}) hold also for $\rho \in \{\gamma, r_{ess}\}$.

If, in addition, $L=L^2(X, \mu)$ 
then   (\ref{gl1vecr}) and (\ref{very_good}) hold also for $\rho =w$.

\noindent (ii) If $L \in \mathcal{L}$, 
$t \ge 1$ and $s_n \ge 1$, then   
 $K^{(t)}$, $K_1 ^{( \alpha _1)}
\circ K_2 ^{(\alpha _2)} \circ \cdots \circ K_n ^{(\alpha _n)}$ and $H$ define  operators on $L$ and the inequalities  
\be
k(i,j)\le \|K\| \;\;\mathit{for} \;\;\mathit{all}\;\; i,j\in R,
\label{lg} 
\ee 
\be
 K_{1} ^{(t)} \cdots  K_{n} ^{(t)} \le ( K_1 \cdots K_n )^{(t)}, 
\label{t_dobro}
\ee
 \be
 \rho (K_{1} ^{(t)} \cdots  K_{n} ^{(t)}) \le \rho (K_1 \cdots K_n)^{t}, 
 \label{gl3tr}
 \ee
(\ref{gl1vecr}) and (\ref{very_good}) hold for $\rho \in \{\|\cdot\|, r\}$.
\end{theorem}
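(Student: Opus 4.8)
The plan is to reduce everything to the two inequalities (\ref{gl1vecr}) and (\ref{very_good}) already available for $\rho\in\{\|\cdot\|,r\}$, by first establishing the pointwise (operator-inequality) facts (\ref{lg}), (\ref{t_dobro}) and the well-definedness claims, and then feeding (\ref{t_dobro}) into monotonicity of $\rho$. The only genuinely new content here is item (ii), where the hypothesis $L\in\mathcal{L}$ and $s_n\ge 1$, $t\ge 1$ are used; for item (i) and the final clause I would simply cite \cite[Theorem 2.2]{DP05} and \cite[Theorem 5.1]{P06} as the excerpt already does, and check that the proof there goes through verbatim for $\gamma$ and $r_{ess}$ once $L$ and $L^*$ have order continuous norms (this is where compact approximation of kernel operators, available under order continuity, is invoked), and for $w$ on $L^2$ using the variational formula $w(K)=\sup\{\langle Kf,f\rangle: f\in L^2_+,\ \|f\|_2=1\}$ together with its monotonicity under $0\le K\le H$.

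For the operator-definedness statements in (ii): since $L\in\mathcal{L}$ we have $e_n\in L$ with $\|e_n\|_L=1$, and the weighted arithmetic--geometric mean bound $K_1^{(\alpha_1)}\circ\cdots\circ K_n^{(\alpha_n)}\le \alpha_1 K_1+\cdots+\alpha_n K_n$ (noted in the Introduction) shows this Hadamard weighted geometric mean, and likewise $H$, is dominated by a positive operator on $L$ and hence defines a positive operator on $L$. For $K^{(t)}$ with $t\ge 1$: first prove (\ref{lg}) — evaluate $K$ on $e_j$, so $(Ke_j)(i)=k(i,j)$, and since $\|e_j\|_L=1$ we get $k(i,j)=|(Ke_j)(i)|\le \|Ke_j\|_L\le\|K\|$ using that $|f(i)|=|f(i)|\,\|e_i\|_L\le\|f\|_L$ in any $L\in\mathcal{L}$. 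Then $k(i,j)^t\le \|K\|^{t-1}k(i,j)$, so $K^{(t)}\le \|K\|^{t-1}K$ as matrices, giving that $K^{(t)}$ defines a (positive) operator on $L$ bounded by $\|K\|^{t-1}\|K\|$. The inequality (\ref{t_dobro}) is the pointwise statement $\sum_{j_1,\ldots,j_{n-1}} k_1(i,j_1)^t k_2(j_1,j_2)^t\cdots k_n(j_{n-1},j)^t\le \big(\sum_{j_1,\ldots,j_{n-1}} k_1(i,j_1)k_2(j_1,j_2)\cdots k_n(j_{n-1},j)\big)^t$; for $t\ge 1$ this is exactly superadditivity of $x\mapsto x^t$ on $\ell^1$-type sums (equivalently, $\|a\|_{1/t}\le\|a\|_1$ applied to the nonnegative sequence $a=(k_1(i,j_1)\cdots k_n(j_{n-1},j))$ raised to power $t$, or just $\sum a_k^t\le(\sum a_k)^t$ for $a_k\ge0$, $t\ge1$). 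One should record that all these sums are finite because each $K_m$ and the product $K_1\cdots K_n$ define operators on $L\in\mathcal{L}$, so applying them to $e_j$ and reading off a coordinate keeps everything finite by (\ref{lg}).

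With (\ref{t_dobro}) in hand, (\ref{gl3tr}) follows from monotonicity of $\rho\in\{\|\cdot\|,r\}$ under the order on positive operators: $\rho(K_1^{(t)}\cdots K_n^{(t)})\le\rho\big((K_1\cdots K_n)^{(t)}\big)$, and then one applies (\ref{gl1vecr}) to the single Hadamard power $(K_1\cdots K_n)^{(t)}$ — writing $t=t\cdot 1$ is degenerate, so instead use the self-improving trick: $(K_1\cdots K_n)^{(t)}$ with exponent $t\ge1$ and a single factor means we need $\rho(M^{(t)})\le\rho(M)^t$ for a positive kernel operator $M$ on $L\in\mathcal{L}$ with $t\ge1$; this is the $n=1$, $s_1=t\ge1$ instance and is proved by the same $M^{(t)}\le\|M\|^{t-1}M$ bound for $\|\cdot\|$ after the standard normalization/diagonal-similarity reduction (replace $M$ by $M/r(M)$ using that $r$ is a limit of normalized iterates, i.e. apply the bound to $M^j$ and take $j$-th roots, using $r(M^{(t)})\le r\big((M^j)^{(t)}\big)^{1/j}$ via $(M^{(t)})^j\le (M^j)^{(t)}$ which is again (\ref{t_dobro})). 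Finally (\ref{gl1vecr}) and (\ref{very_good}) for $\rho\in\{\|\cdot\|,r\}$ on $L\in\mathcal{L}$ are the specialization of item (i), whose proof does not need order continuity when $\rho\in\{\|\cdot\|,r\}$. The main obstacle is organizing the bootstrap for the spectral radius in (\ref{gl3tr})/(\ref{gl1vecr}) cleanly — i.e. passing from the easy norm bound $M^{(t)}\le\|M\|^{t-1}M$ to the sharp spectral-radius bound $r(M^{(t)})\le r(M)^t$ — which is handled by applying the norm estimate along powers and using $(M^{(t)})^j\le(M^j)^{(t)}$ together with Gelfand's formula $r(M)=\lim_j\|M^j\|^{1/j}$.
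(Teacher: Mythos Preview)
The paper does not actually prove Theorem~\ref{good_work}: it is stated as a recalled result, with explicit attribution to \cite[Theorem~2.2]{DP05} and \cite[Theorem~5.1 and Example~3.7]{P06} (and further references), so there is no in-paper proof to compare against. Your proposal recognizes this for part~(i) and correctly defers to those references; your sketch for part~(ii) is a faithful reconstruction of the standard argument from \cite{P06} and \cite{DP05}: (\ref{lg}) via evaluation on $e_j$, (\ref{t_dobro}) via the elementary inequality $\sum a_k^{\,t}\le(\sum a_k)^t$ for $t\ge1$, and then (\ref{gl3tr}) by monotonicity together with the bootstrap $(M^{(t)})^j\le(M^j)^{(t)}$ and Gelfand's formula. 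That is exactly the route taken in the cited sources.

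There is one slip you should correct. In the well-definedness paragraph you invoke the arithmetic--geometric mean bound $K_1^{(\alpha_1)}\circ\cdots\circ K_n^{(\alpha_n)}\le\alpha_1K_1+\cdots+\alpha_nK_n$ from the Introduction, but that inequality is stated (and valid) only for $s_n=\sum\alpha_j=1$; it fails when $s_n>1$. The fix is already implicit in your own toolkit: set $\beta_j=\alpha_j/s_n$ so that $\sum\beta_j=1$, write
\[
K_1^{(\alpha_1)}\circ\cdots\circ K_n^{(\alpha_n)}
=\bigl(K_1^{(\beta_1)}\circ\cdots\circ K_n^{(\beta_n)}\bigr)^{(s_n)},
\]
apply the AM--GM bound to the inner bracket (legitimate since $\sum\beta_j=1$) to see it defines an operator on $L$, and then use your inequality $M^{(t)}\le\|M\|^{t-1}M$ with $t=s_n\ge1$ to conclude that the $s_n$-th Hadamard power, and hence $H$, also defines an operator. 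With this adjustment your argument for (ii) is complete and matches the intended proof.
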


In the finite-dimensional case  Inequality (\ref{gl1vecr}) for the spectral radius goes back to Kingman \cite{K61} implicitly,
and it was later considered by several authors (\cite{HJ91}, \cite{BR97}, \cite{EJS88}, \cite{KO85}, \cite{Sch86},
\cite{D92}, \cite{HLS97}, \cite{H07}),  using different methods. 
In \cite{EJS88}  (see also \cite{HJ91}) the method of linearization was applied 
to generalize the result of Cohen  (see \cite{C79}, \cite{C81}, \cite{F81}, \cite{E84}, \cite{DN84}, \cite[Corollary 5.7.13]{HJ91}, \cite{EJS88}, \cite[Theorem 3.5.9]{BR97}) that asserts that the spectral radius, considered as a function of the diagonal entries of a nonnegative matrix, 
is a convex function. In \cite{DP10} the generalization to the setting of infinite dimensional matrices was established and very recently in \cite{P21a} a version for the essential spectral radius was proved.
It should be mentioned that a very general extension of Cohen's theorem was proved in the setting
of Banach ordered spaces by Kato \cite{Ka82}, who used extensively the theory of strongly continuous semigroups of operators.
Let us also point out that Inequalities 
(\ref{very_good}) are actually results on the joint and generalized spectral radius and their essential versions (see e.g.  \cite{P06}, \cite{P12}, \cite{P17}, \cite{P18b}) 
and have been applied to obtain several inequalities involving the Hadamard and ordinary products of operators (see, e.g., \cite{Sch11}, \cite{Sch11b}, \cite{P12}, \cite{CZ16}, \cite{DP16}, \cite{P17}, \cite{P18a}, \cite{P18b},  \cite{Z18}, \cite{MP13}, \cite{MP18}, \cite{RLP18}, \cite{P21a}).

\vspace{4mm}

Let $K_1=[k_1(i,j)]_{i,j\in R}, \ldots, K_n=[k_n(i,j)]_{i,j\in R}$
be nonnegative matrices and let $\alpha_1, \ldots, \alpha_n$ be nonnegative numbers such that $\sum_{i=1}^n \alpha_i =1$. The nonnegative matrix \\
$C(K_1, \ldots , K_n, \alpha_1, \ldots , \alpha_n) = [c(i,j)]_{i,j\in R}$ (\cite{DP10}, \cite{EJS88}, \cite{HJ91}) is defined by 

$$ c(i,j) = \left\{ \begin{array}{ccc}
k_1 ^{\alpha_1} (i,j) \cdots k_n ^{\alpha_n} (i,j) & \textrm{if} & i \neq j \\
\alpha_1 k_1(i,i)+ \ldots + \alpha_n k_n(i,i) & \textrm{if} & i = j 
\end{array} \right. .$$
In other words, the diagonal part of $C(K_1, \ldots , K_n, \alpha_1, \ldots , \alpha_n)$ is 
equal to the diagonal part of $\alpha_1 K_1 + \cdots + \alpha_n K_n$, while 
its nondiagonal part equals the nondiagonal part of $K_1^{( \alpha_1)} \circ K_2^{(\alpha_2)} \circ \cdots \circ K_n^{(\alpha_n)}$.

By the inequality between weighted geometric and weighted arithmetic means, we have
\be
 K_1 ^{( \alpha _1)} \circ K_2 ^{(\alpha _2)} \circ \cdots \circ K_n ^{(\alpha _n)} \le 
   C(K_1,\ldots , K_n, \alpha_1, \ldots , \alpha _n) \le  \alpha_1 K_1 + \cdots + \alpha _n K_n . 
\label{c_ineq}
\ee
From the right-hand inequality it follows that the matrix $C(K_1,\ldots , K_n, \alpha_1, \ldots , \alpha
_n)$ defines an operator on $L$ provided 
the matrices $K_1,\ldots,K_n$ define operators on $L\in \mathcal{L}$.

The following generalization of Cohen's theorem was obtained in \cite[Theorem 2.1]{DP10} and in \cite[Theorem 2.2]{P21a}.

\begin{theorem}
Given $L$ in $\mathcal{L}$, let $K_1, \ldots , K_n$ be nonnegative
matrices that define operators on $L$ and $\alpha_1, \ldots, \alpha
_n$ nonnegative numbers such that $\sum_{i=1} ^n \alpha_i =1$. Then for $\rho=r$ we have 
\be 
\rho \left(C(K_1,\ldots , K_n, \alpha_1, \ldots , \alpha_n)\right) \le 
\alpha _1\rho (K_1)+ \cdots + \alpha _n \rho (K_n).
\label{ckh} 
\ee
In particular, if $K_1$, $\ldots$, $K_n$ have the same non-diagonal part, then 
\be 
\rho (\alpha_1 K_1 + \cdots + \alpha_n K_n) \le \alpha _1 \rho (K_1)+ \cdots + \alpha _n \rho (K_n) . 
\label{same_non-diag}
\ee
In other words, if $D_1, \ldots , D_n$ are diagonal matrices and $K$ a matrix such that $K+D_1$, $\ldots$, $K+D_n$
are nonnegative matrices that define operators on $L$, then we have   
\be
\rho \left(\alpha _1 (K+D_1)+\cdots +\alpha _n (K+D_n)\right) \le \alpha _1 \rho (K+D_1)+ \cdots + \alpha _n \rho (K+D_n).
\label{diagCohen}
\ee
If, in addition,  $L$ and $L^*$ have order continuous norms then under the above conditions inequalities (\ref{ckh}), (\ref{same_non-diag}) and (\ref{diagCohen}) hold also for $\rho =r_{ess}$.
\label{cohen}
\end{theorem}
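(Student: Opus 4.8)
My plan is to prove the spectral-radius inequality (\ref{ckh}) and then to read off (\ref{same_non-diag}), (\ref{diagCohen}) and the essential-spectral-radius statements from it.

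\emph{Reduction to finite matrices with positive entries.} First I would pass to finite matrices. For $L\in\mathcal{L}$ with each $K_l$ defining an operator on $L$, let $K_{l,[N]}$ be the $N\times N$ principal truncation of $K_l$. Then $0\le K_{l,[N]}\le K_l$, so $r(K_{l,[N]})\le r(K_l)$, and, crucially, truncation commutes with the $C$-construction: $C(K_{1,[N]},\dots,K_{n,[N]},\alpha_1,\dots,\alpha_n)$ is precisely the $N\times N$ truncation of $C(K_1,\dots,K_n,\alpha_1,\dots,\alpha_n)$. Using the known monotone-limit formula $r(A)=\sup_N r(A_{[N]})$ for positive kernel operators, it then suffices to prove the inequality for each truncation, i.e. for finite nonnegative matrices. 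Next, since $r$ is continuous on finite matrices and the entries of $C(K_1,\dots,K_n,\alpha_1,\dots,\alpha_n)$ depend continuously on those of the $K_l$ (with the convention $0^{\alpha}=0$), I would replace each $K_l$ by $K_l+\varepsilon J$, $J$ the all-ones matrix, prove the inequality there, and let $\varepsilon\downarrow0$. So I may assume every $K_l$ is a finite matrix with strictly positive entries.

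\emph{The core estimate.} Such a $K_l$ is irreducible, hence has a strictly positive Perron eigenvector $u_l$, $K_l u_l=r(K_l)u_l$. I would test the matrix $C:=C(K_1,\dots,K_n,\alpha_1,\dots,\alpha_n)$ against $x$ with $x_i:=\prod_l u_l(i)^{\alpha_l}>0$, that is, $x=u_1^{(\alpha_1)}\circ\cdots\circ u_n^{(\alpha_n)}$. Splitting off the diagonal, applying the generalized H\"older inequality with exponents $1/\alpha_l$ to the off-diagonal sum, substituting $\sum_{j\ne i}k_l(i,j)u_l(j)=(r(K_l)-k_l(i,i))u_l(i)$, and then using the weighted AM--GM inequality (legitimate because $k_l(i,i)\le r(K_l)$, a $1\times1$ principal-submatrix bound), one gets for every $i$
\begin{align*}
(Cx)_i &\le\Big(\sum_l\alpha_l k_l(i,i)\Big)x_i+\prod_l\Big(\sum_{j\ne i}k_l(i,j)u_l(j)\Big)^{\alpha_l}\\
&=x_i\Big(\sum_l\alpha_l k_l(i,i)+\prod_l(r(K_l)-k_l(i,i))^{\alpha_l}\Big)\le x_i\sum_l\alpha_l r(K_l).
\end{align*}
As $x>0$, the Collatz--Wielandt bound $r(C)\le\max_i (Cx)_i/x_i$ then gives $r(C)\le\sum_l\alpha_l r(K_l)$, and undoing the reductions proves (\ref{ckh}) for $\rho=r$. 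When all $K_l$ share the off-diagonal part, $C(K_1,\dots,K_n,\alpha_1,\dots,\alpha_n)$ coincides with $\sum_l\alpha_l K_l$ (the weighted geometric mean of equal off-diagonal blocks is that block, the diagonal being the weighted average), so (\ref{ckh}) specializes to (\ref{same_non-diag}), and choosing $K_l=K+D_l$ gives (\ref{diagCohen}).

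\emph{The essential-radius case.} Now assume $L$ and $L^*$ have order continuous norms. Writing $K_l^{[N]}:=K_l-K_{l,[N]}\le K_l$ and $C^{[N]}:=C-C_{[N]}$, the same commutation property gives $C^{[N]}=C(K_1^{[N]},\dots,K_n^{[N]},\alpha_1,\dots,\alpha_n)$, while $C_{[N]}$ is finite-rank, hence compact; therefore $r_{ess}(C)=r_{ess}(C^{[N]})\le r(C^{[N]})\le\sum_l\alpha_l r(K_l^{[N]})$ by the case already proved. The numbers $r(K_l^{[N]})$ decrease in $N$, and here I would invoke the fact that, for a positive kernel operator $K$ on such an $L$, $r_{ess}(K)=\lim_N r(K^{[N]})$ (the essential spectral radius is the ``spectral radius at infinity''). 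Letting $N\to\infty$ then yields $r_{ess}(C)\le\sum_l\alpha_l r_{ess}(K_l)$, and (\ref{same_non-diag}), (\ref{diagCohen}) for $r_{ess}$ follow exactly as before.

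\emph{Main obstacle.} The algebraic heart of the argument is short: H\"older, weighted AM--GM, and Collatz--Wielandt applied to the test vector $u_1^{(\alpha_1)}\circ\cdots\circ u_n^{(\alpha_n)}$. The real work sits in the two limiting steps: the monotone-limit formula $r(A)=\sup_N r(A_{[N]})$, and, for the essential radius, the identity $r_{ess}(K)=\lim_N r(K^{[N]})$. The latter is the hard part, and it is precisely where the hypothesis that both $L$ and $L^*$ have order continuous norms enters, since it is what lets one capture $\gamma$, and hence $r_{ess}$, through the ``tails'' of the operator.
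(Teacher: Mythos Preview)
The paper does not actually prove this theorem; it is quoted verbatim from \cite[Theorem~2.1]{DP10} (the case $\rho=r$) and \cite[Theorem~2.2]{P21a} (the case $\rho=r_{ess}$), so there is no in-paper argument to compare against. Your proof, however, is precisely the strategy of those two references: reduce via principal truncations and a positivity perturbation to finite strictly positive matrices, test $C$ against the Hadamard-power vector $u_1^{(\alpha_1)}\circ\cdots\circ u_n^{(\alpha_n)}$ of Perron eigenvectors, and combine H\"older (this is inequality (\ref{H})) with weighted AM--GM and the Collatz--Wielandt bound --- this is the Elsner--Johnson--Dias da Silva argument carried to infinite matrices in \cite{DP10}. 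For $r_{ess}$, your tail reduction $r_{ess}(C)=r_{ess}(C^{[N]})\le r(C^{[N]})\le\sum_l\alpha_l r(K_l^{[N]})$ followed by $N\to\infty$ is exactly the route of \cite{P21a}.

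You have correctly located the only nontrivial ingredients: the monotone-limit formula $r(A)=\sup_N r(A_{[N]})$ for the first part, and the identity $r_{ess}(K)=\lim_N r(K^{[N]})$ for the second. Both are established in the cited references (the second indeed requires the order-continuity hypothesis on $L$ and $L^*$, which is what guarantees that the finite-rank truncations capture all compactness). Your observation that truncation commutes with the $C$-construction, both for principal blocks and for tails, is the small but essential algebraic point that makes the reduction work; it is used the same way in \cite{DP10} and \cite{P21a}.
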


Recall also the following well known inequality (see e.g. \cite{P06}, \cite{DP05}, \cite{H07}) for nonnegative measurable functions  and for $\alpha$ and $\beta$ nonnegative numbers such that $\alpha + \beta \ge 1$: 
 \be
f_1 ^{\alpha}g_1 ^{\beta} + \ldots +f_m ^{\alpha}g_m ^{\beta} \le (f_1+ \ldots + f_m)^{\alpha}(g_1+ \ldots + g_m)^{\beta}.
\label{H}
\ee

The rest of the article is organized as follows. In Section 2 we first state Theorem \ref{severalH} which we will need in our proofs and follows directly from results of \cite{P21a}. Then we prove new results on geometric symmetrizations of positive kernel operators and their weighted versions, which generalize several results from  \cite{Drn19}, \cite{P21a}, \cite{DP05},  \cite{P06} and \cite{H07} and we prove new versions of these results. We conclude the article with Section 3, where we establish some new additional results on Hadamard weighted geometric means of operators. In particular, in Section 3 we extend the main results of \cite{Z18} and some results of \cite{P18a}.

\section{Results on weighted geometric symmetrizations}

Given a nonnegative matrix $K$ that defines an operator on $L$ in $\mathcal{L}$, 
let us denote $\|K\|_{\infty}= \sup _{i,j \in R} k(i,j)$. 
Then we have $\|K\|_{\infty} \le \|K\|$ by (\ref{lg}).

In the following result 
we state versions of  
(\ref{very_good}). The result follows  from Inequalities (\ref{basic2}), 
and (\ref{very_good}) combined with \cite[Theorem 2.6]{P21a} applied to the matrix $H$ from (\ref{citat_H}). 
\begin{theorem}
Given $L$ in $\mathcal{L}$, 
let $\{K_{i j}\}_{i=1, j=1}^{l, n}$ be
nonnegative matrices that define operators on $L$ and
$\alpha _1$, $\alpha _2$,..., $\alpha _n$ positive numbers such that
$s_n =\sum_{i=1}^n \alpha _i \ge 1$. Let 
\begin{eqnarray} 
\label{citat_H}
H:&= &\left(K_{1 1}^{(\alpha _1)} \circ \cdots \circ K_{1 n}^{(\alpha _n)}\right) \ldots \left(K_{l 1}^{(\alpha _1)} \circ \cdots \circ K_{l n}^{(\alpha _n)} \right),\\
\nonumber
H_i: &= &K_{1 i} \cdots  K_{l i}, \;\;\;  M = \max _{i=1,\ldots, n} \|H_i\|_{\infty}, \\
\nonumber
  \beta&=&M^{s_n-1} \ \  \mathrm{and} \ \ \beta _i = \frac{\alpha _i}{ s_n} \ \  \mathrm{for}  \ \ \mathrm{ all}  \ \  i=1,\ldots, n . 
\end{eqnarray}
Then inequalities
\begin{eqnarray}
\nonumber
H & \le & H_1^{(\alpha_1)} \circ \cdots \circ H_n^{(\alpha_n)} \le \beta  H_1^{(\beta_1)} \circ \cdots \circ H_n^{(\beta_n)} \\
\nonumber
& \le & \beta C(H_1,\ldots , H_n, \beta_1, \ldots , \beta _n) \le \beta (\beta _1 H_1 + \cdots + \beta _n H_n), \\
\nonumber
\rho  \left(H \right)   &\le &  
\rho \left( H_1 ^{(\alpha _1)} \circ \cdots 
\circ H_n ^{(\alpha _n)} \right)  \le \beta \rho (  H_1^{(\beta_1)} \circ \cdots \circ H_n^{(\beta_n)} ) 
\label{very_good2}
\\
&\le &   
\nonumber
 \beta  \rho \left( H_1 \right)^{\beta _1} \cdots 
\rho \left( H_n \right)^{\beta _n} \le
\beta (\beta _1 \rho ( H_1) + \cdots + \beta _n \rho ( H_n ) ) \\
& \le &  \beta (\alpha _1 \rho ( H_1 ) + \cdots + \alpha _n \rho ( H_n ) ) ,
\\
\nonumber
\rho \left(H \right)   &\le &  \rho (H_1^{(\alpha_1)} \circ \cdots \circ H_n^{(\alpha_n)} ) 
\le  \beta \rho (  H_1^{(\beta_1)} \circ \cdots \circ H_n^{(\beta_n)} )\\
\label{new_geom_2_H}
&\le &   \beta  \rho( C(H_1,\ldots , H_n, \beta_1, \ldots , \beta _n)) \le  \beta (\beta _1 \rho(H_1)+ \cdots + \beta _n \rho (H_n))  \\
\nonumber
\mathit{hold}\;\mathit{for}\;  \mathit{all} \; &\rho \in \{r, \|\cdot\| \}&\; \mathit{and} \; \mathit{inequalities}\\
\nonumber
d(H )   &\le &  
d( H_1 ^{(\alpha _1)} \circ \cdots \circ H_n ^{(\alpha _n)} ) \le   \beta  d( H_1^{(\beta_1)} \circ \cdots \circ H_n^{(\beta_n)} )\\
\nonumber
 &\le &  \beta d(C(H_1,\ldots , H_n, \beta_1, \ldots , \beta _n) )  
\le     \beta d( \beta _1 H_1 + \cdots + \beta _n H_n ) \\
\label{new_geom_norm2_H}
& \le  &\beta (\beta _1 d(H_1)+ \cdots + \beta _n d(H_n))
\end{eqnarray}
hold for $d=\|\cdot \|$.

If, in addition, $L$ and $L^*$ have order continuous norms then inequalities (\ref{very_good2}) and (\ref{new_geom_2_H}) hold also for all $\rho \in \{r_{ess},\gamma \}$ and 
inequalities  (\ref{new_geom_norm2_H}) 
hold also for 
$d=\gamma$.

If, in addition, $L=l^2 (R)$, then inequalities (\ref{very_good2}) and (\ref{new_geom_norm2_H}) (and (\ref{new_geom_2_H})) hold also for $\rho =w$ and for $d=w$.

If, in addition, the matrices $H_1, \ldots, H_n$ are $m\times m$ matrices and the diagonal part of  $H_1^{(\alpha_1)} \circ  \cdots \circ H_n^{(\alpha_n)}$ is equal to zero, then
\be
r \left(H \right)   \le  r \left( H_1 ^{(\alpha _1)} \circ \cdots \circ H_n ^{(\alpha _n)} \right)  \le (m-1)\delta,
\label{mm}
\ee
where $\delta =\max \{M^{s_n}, 1\}$.

\label{severalH}
\end{theorem}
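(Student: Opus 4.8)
The plan is to reduce inequality (\ref{mm}) to the already-established chain (\ref{very_good2}) together with a bound on $r$ of the weighted geometric mean that exploits the vanishing diagonal. First I would record what the earlier part of the theorem already gives us: by (\ref{basic2}) we have $H \le H_1^{(\alpha_1)} \circ \cdots \circ H_n^{(\alpha_n)}$, and since all operators involved are positive kernel operators (here, nonnegative $m\times m$ matrices), monotonicity of the spectral radius yields $r(H) \le r(H_1^{(\alpha_1)} \circ \cdots \circ H_n^{(\alpha_n)})$. So the whole statement reduces to proving $r(H_1^{(\alpha_1)} \circ \cdots \circ H_n^{(\alpha_n)}) \le (m-1)\delta$ under the hypothesis that the diagonal of $H_1^{(\alpha_1)} \circ \cdots \circ H_n^{(\alpha_n)}$ is zero.

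For that reduced claim I would argue as follows. Write $G := H_1^{(\alpha_1)} \circ \cdots \circ H_n^{(\alpha_n)}$; its entries are $g(i,j) = h_1(i,j)^{\alpha_1}\cdots h_n(i,j)^{\alpha_n}$ for $i\neq j$ and $g(i,i)=0$. By the definition of $M = \max_i \|H_i\|_\infty = \max_i \sup_{p,q} h_i(p,q)$, each off-diagonal entry satisfies $g(i,j) \le M^{\alpha_1}\cdots M^{\alpha_n} = M^{s_n} \le \delta$. Hence $G$ is entrywise dominated by $\delta(J - I)$, where $J$ is the all-ones $m\times m$ matrix and $I$ the identity (this uses the zero diagonal in an essential way). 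The matrix $J-I$ has eigenvalues $m-1$ (once) and $-1$ (with multiplicity $m-1$), so $r(J-I) = m-1$, and therefore $r(\delta(J-I)) = (m-1)\delta$. By monotonicity of the spectral radius for nonnegative matrices, $r(G) \le r(\delta(J-I)) = (m-1)\delta$. Combining with the first paragraph gives $r(H) \le r(G) \le (m-1)\delta$, which is (\ref{mm}).

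I would then note the one subtlety worth spelling out: the bound $g(i,j)\le M^{s_n}$ requires $s_n \ge 1$ only to guarantee (via the hypotheses of the theorem) that $G$ is a well-defined operator and that $M^{s_n}$ is the right normalization; the entrywise estimate itself is immediate from $h_i(i,j)\le M$ and $\sum \alpha_i = s_n$. The genuinely load-bearing hypothesis is the vanishing of the diagonal of $G$ — without it one could only dominate $G$ by $\delta J$, whose spectral radius is $m\delta$, not $(m-1)\delta$. So the main (modest) obstacle is simply to make the reduction to the model matrix $\delta(J-I)$ cleanly and to invoke the correct monotonicity statement; everything else is the elementary spectral computation for $J-I$. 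No deep input beyond (\ref{basic2}) and monotonicity of $r$ on nonnegative matrices is needed.
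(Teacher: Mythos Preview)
Your argument for (\ref{mm}) is correct and is essentially the approach the paper intends: the paper does not spell out a proof but simply says the theorem follows from (\ref{basic2}), (\ref{very_good}) and \cite[Theorem 2.6]{P21a} applied to the matrix in (\ref{citat_H}); your reduction $H\le G:=H_1^{(\alpha_1)}\circ\cdots\circ H_n^{(\alpha_n)}$ via (\ref{basic2}) together with the entrywise domination $G\le \delta(J-I)$ and $r(J-I)=m-1$ is exactly the elementary content behind that citation for the final inequality (\ref{mm}). One minor remark: your estimate actually gives the slightly sharper bound $r(G)\le (m-1)M^{s_n}$, from which $(m-1)\delta$ follows since $M^{s_n}\le\delta$; and you are right that the zero-diagonal hypothesis is what allows comparison with $J-I$ rather than $J$.
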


Let us recall 
the notion of geometric symmetrization of positive kernel operators 
on $L^2(X,\mu) $.
  Let $K$ be a positive kernel operator on $L^2(X,\mu)$ with kernel $k$.
The geometric symmetrization $S(K)$ of $K$ is the 
positive {\bf selfadjoint} kernel operator on $L^2(X,\mu)$ with kernel equal to $\sqrt{k(x,y)k(y,x)}$
at point $(x, y) \in X \times X$. Note that $S(K)=K^{(1/2)} \circ (K^*)^{(1/2)}$, 
since the kernel of the adjoint operator $K^*$ is equal to $k(y,x)$ at point $(x, y) \in X \times X$.

Next we extend several results  from \cite{Drn19}, \cite{P21a}, \cite{DP05}, \cite{P06} and \cite{H07} to the setting of weighted geometric ``symmetrizations'' $S_{\alpha} (\cdot) $ of positive kernel operators and prove new related results to those from the above references. Let $K$ be a positive kernel operator on $L^2 (X, \mu)$ and $\alpha \in [0,1]$. Denote 
$S_{\alpha} (K) = K^{(\alpha )} \circ (K^*)^{(1-\alpha)} $, which is a kernel operator on $L^2 (X, \mu)$ with a kernel $k^{\alpha}(x,y)k^{1-\alpha}(y,x)$. Observe that  
$(S_{\alpha} (K))^* =S_{\alpha} (K^*)= S_{1-\alpha} (K) $.


The following result generalizes and refines \cite[Propositions 3.1 and 3.2]{P21a}. 
\begin{proposition} Let $K, K_1 ,\ldots, K_n$ be positive kernel operators on $L^2(X,\mu)$ and $\alpha \in [0,1]$. Then  we have

$$\rho (S_{\alpha} (K_{1}) \cdots  S_{\alpha}(K_{n}))\;\;\;\;\;\;\;\;\;\;\;\;\;\;\;\;\;\;\;\;\;\;\;\;\;\;\;\;\;\;\;\;\;\;\;\;\;\;\;\;\;\;\;\;\;\;\;\;\;\;\;\;\;\;\;\;\;\;\;\;\;\;\;\;\;\;\;\;$$
\be \le \rho \left((K_1 \cdots K_n )^{(\alpha)} \circ ((K_n \cdots K_1)^*)^{(1-\alpha)}  \right) \le \rho (K_1 \cdots K_n )^{\alpha} \, \rho (K_n \cdots K_1 )^{1-\alpha} ,
\label{glavnaSN_rho}
\ee 
\be
\rho(S_{\alpha}(K_1)+ \cdots + S_{\alpha}(K_m)) \le \rho\left(S_{\alpha}(K_1+ \cdots+ K_m)\right) \le \rho(K_1+ \cdots+ K_m)
\label{+several}
\ee
for all $\rho \in \{ r, r_{ess}, \gamma, \|\cdot\|, w \}$.
In particular, for all $\rho \in \{ r, r_{ess}, \gamma, \|\cdot\|, w \}$ we have
\be
\rho\left(S_{\alpha}(K)\right) \le \rho (K) . 
\label{Schwenk_rho}
\ee 
We also have
\be
\rho \left(S_{\alpha}(K_{1}) S_{\alpha}(K_{2})\right) \le \rho \left((K_1 K_2 )^{(\alpha)} \circ ((K_2 K_1)^*)^{(1-\alpha)}  \right) \le  \rho (K_1 K_2) .
\label{radius_two_rho_first}
\ee 
for  $\rho \in \{ r, r_{ess}\}$.
\label{weig_sym}
\end{proposition}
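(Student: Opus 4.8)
The plan is to prove the three chains of inequalities \eqref{glavnaSN_rho}, \eqref{+several}, \eqref{Schwenk_rho}, \eqref{radius_two_rho_first} by recognizing each as a specialization of Theorem \ref{good_work} (and its consequences) with a judicious choice of the operators $K_{ij}$ and the exponents $\alpha_j$. The key observation is that $S_\alpha(K_i) = K_i^{(\alpha)} \circ (K_i^*)^{(1-\alpha)}$ is itself a Hadamard weighted geometric mean of the two operators $K_i$ and $K_i^*$ with weights $\alpha$ and $1-\alpha$ (which sum to $1$), so the whole machinery of Theorem \ref{good_work} applies on $L^2(X,\mu)$, where all five functionals $\rho \in \{r, r_{ess}, \gamma, \|\cdot\|, w\}$ are admissible because $L^2$ and $(L^2)^* = L^2$ have order continuous norms.

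For \eqref{glavnaSN_rho}, I would apply inequality \eqref{very_good} from Theorem \ref{good_work} with $n=2$, weights $(\alpha_1,\alpha_2)=(\alpha,1-\alpha)$, and $l=n$ rows, taking $K_{i1}=K_i$ and $K_{i2}=K_i^*$ for $i=1,\ldots,n$. Then the operator $H$ of Theorem \ref{good_work} becomes exactly $S_\alpha(K_1)\cdots S_\alpha(K_n)$, while $(K_{11}\cdots K_{l1})=K_1\cdots K_n$ and $(K_{12}\cdots K_{l2})=K_1^*\cdots K_n^* = (K_n\cdots K_1)^*$; substituting into \eqref{very_good} yields both inequalities in \eqref{glavnaSN_rho} at once, for $\rho\in\{\|\cdot\|,r,\gamma,r_{ess}\}$, and the $w$ case comes from the last clause of Theorem \ref{good_work}(i). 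For \eqref{radius_two_rho_first} the same substitution with $n=2$ gives the first inequality; the second inequality $\rho\bigl((K_1K_2)^{(\alpha)}\circ((K_2K_1)^*)^{(1-\alpha)}\bigr)\le\rho(K_1K_2)$ for $\rho\in\{r,r_{ess}\}$ uses that $r(K_1K_2)=r(K_2K_1)$ (and likewise $r_{ess}$, since the spectra of $AB$ and $BA$ agree off zero, hence also their essential spectra), combined with \eqref{gl1vecr}: $\rho\bigl((K_1K_2)^{(\alpha)}\circ((K_2K_1)^*)^{(1-\alpha)}\bigr)\le \rho(K_1K_2)^\alpha\,\rho((K_2K_1)^*)^{1-\alpha}=\rho(K_1K_2)^\alpha\,\rho(K_2K_1)^{1-\alpha}=\rho(K_1K_2)$, where I also use $\rho(K^*)=\rho(K)$ for $r$ and $r_{ess}$.

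For \eqref{+several} I would first note that $S_\alpha$ is additive-dominated: since Hadamard powers satisfy $(A+B)^{(\beta)}\ge A^{(\beta)}+B^{(\beta)}$ pointwise is \emph{false} in general, instead I use inequality \eqref{H} with $m$ terms and exponents $(\alpha,1-\alpha)$ (whose sum is $1\ge 1$), applied pointwise to the kernels: $\sum_{i=1}^m k_i^\alpha(x,y)\,k_i^{1-\alpha}(y,x)\le\bigl(\sum_i k_i(x,y)\bigr)^\alpha\bigl(\sum_i k_i(y,x)\bigr)^{1-\alpha}$, which is precisely the statement that $S_\alpha(K_1)+\cdots+S_\alpha(K_m)\le S_\alpha(K_1+\cdots+K_m)$ as positive kernel operators. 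Monotonicity of each $\rho$ under the order $\le$ on positive kernel operators (stated in the preliminaries for $\|\cdot\|$, $\gamma$, $r$, $r_{ess}$ via $\gamma$, and for $w$ on $L^2$) then gives the first inequality of \eqref{+several}; the second, $\rho(S_\alpha(K))\le\rho(K)$ applied to $K=K_1+\cdots+K_m$, follows from \eqref{gl1vecr} with $n=2$: $\rho(K^{(\alpha)}\circ(K^*)^{(1-\alpha)})\le\rho(K)^\alpha\rho(K^*)^{1-\alpha}$, which equals $\rho(K)$ for $r$, $r_{ess}$ (using $\rho(K^*)=\rho(K)$) and for $\|\cdot\|$, $w$ (using $\|K^*\|=\|K\|$, $w(K^*)=w(K)$); for $\gamma$ one uses $\gamma(K^*)=\gamma(K)$ on $L^2$ (valid since the Hausdorff measure of noncompactness of an operator on a Hilbert space equals that of its adjoint). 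Specializing $m=1$ in \eqref{+several} gives \eqref{Schwenk_rho}. The main obstacle I anticipate is the bookkeeping around $\rho(K^*)=\rho(K)$ for the \emph{essential} spectral radius and for $\gamma$ on $L^2$ — one must be careful that these identities genuinely hold for positive kernel operators on $L^2$ (they do, because adjunction is an isometric anti-isomorphism that descends to the Calkin algebra), and that $S_\alpha(K_i)$ is indeed a kernel operator on all of $L^2$ (true since $S_\alpha(K_i)\le\alpha K_i+(1-\alpha)K_i^*$ by arithmetic–geometric mean, and both $K_i$, $K_i^*$ are bounded on $L^2$).
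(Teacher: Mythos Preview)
Your proposal is correct and follows essentially the same approach as the paper: apply Theorem~\ref{good_work} (specifically \eqref{very_good}) with the two-column choice $K_{i1}=K_i$, $K_{i2}=K_i^*$ and weights $(\alpha,1-\alpha)$ to obtain \eqref{glavnaSN_rho}, then deduce \eqref{Schwenk_rho} as a special case, get \eqref{radius_two_rho_first} from \eqref{glavnaSN_rho} together with $\rho(K_1K_2)=\rho(K_2K_1)$ for $\rho\in\{r,r_{ess}\}$, and derive \eqref{+several} from the pointwise inequality \eqref{H} plus \eqref{Schwenk_rho}. The only cosmetic difference is that the paper extracts \eqref{Schwenk_rho} directly from \eqref{glavnaSN_rho} (case $n=1$) rather than via \eqref{+several}, and it invokes $\rho(K^*)=\rho(K)$ without the explicit justification you supply.
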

\begin{proof}
By (\ref{very_good}) 
we have 
$$\rho \left(S_{\alpha} (K_{1}) \cdots  S_{\alpha}(K_{n})\right) = \rho \left( (K_1 ^{(\alpha)} \circ (K_1^*)^{(1-\alpha)}) \cdots 
\left(K_n ^{(\alpha)} \circ (K_n^*)^{(1-\alpha)}\right)\right) $$
$$\le \rho \left((K_1 \cdots K_n )^{(\alpha)} \circ ((K_n \cdots K_1)^*)^{(1-\alpha)}  \right)$$
$$ \le \rho (K_1 \cdots K_n)^{\alpha} \, \rho ((K_n \cdots K_1)^*)^{1-\alpha} = 
\rho (K_1 \cdots K_n )^{\alpha} \, \rho (K_n \cdots K_1 )^{1-\alpha}.$$
This proves (\ref{glavnaSN_rho}). 
Inequality  (\ref{Schwenk_rho}) is a special cases of (\ref{glavnaSN_rho}) 
while (\ref{radius_two_rho_first}) follows from (\ref{glavnaSN_rho}) and
$\rho(K_1 K_2)=\rho(K_2 K_1)$ for $\rho \in \{ r, r_{ess}\}$. 

Inequalities (\ref{+several}) 
follow from (\ref{H}) and  (\ref{Schwenk_rho}).
\end{proof}
If $K$ is a nonnegative matrix that defines an operator on $l^2 (R)$ and if $\alpha$ and $\beta$ are nonnegative numbers such that $\alpha + \beta \ge 1$, then a nonnegative matrix \\
$S_{\alpha, \beta} (K) = K^{(\alpha)} \circ (K^*)^{(\beta)}  $ also defines an operator on $l^2 (R)$ by Theorem \ref{good_work}(ii). The following result is proved in a similar way as Proposition \ref{weig_sym}.

\begin{proposition} Let $K, K_1 ,\ldots, K_n$ be nonnegative matrices that define operators on $l^2 (R)$ and let $\alpha$ and $\beta$ be nonnegative numbers such that $\alpha + \beta \ge 1$. Then  we have
$$\rho (S_{\alpha ,\beta} (K_{1}) \cdots  S_{\alpha  ,\beta}(K_{n})) \;\;\;\;\;\; \;\;\;\;\;\;\;\;\;\;\;\;\;\;\;\;\;\;\;\;\;\;\;\;\;\;\;\;\;\;\;\;\;\;\;\;\;\;\;\;\;\;\;\;\;\;\;\;\;\;\;\;\;\;$$
\be \le \rho \left((K_1 \cdots K_n )^{(\alpha)} \circ ((K_n \cdots K_1)^*)^{(\beta)}  \right) 
\le \rho (K_1 \cdots K_n )^{\alpha} \, \rho (K_n \cdots K_1 )^{\beta},
\label{glavnaSN_rho_s}
\ee 
\be
\rho\left(S_{\alpha ,\beta}(K)\right) \le \rho (K)^{\alpha + \beta},  
\label{Schwenk_rho_s}
\ee 
\be
\rho(S_{\alpha  ,\beta}(K_1)+ \cdots + S_{\alpha  ,\beta}(K_m)) \le \rho\left(S_{\alpha  ,\beta}(K_1+ \cdots+ K_m)\right) \le \rho(K_1+ \cdots+ K_m)^{\alpha + \beta}
\label{+several_s}
\ee

for all $\rho \in \{ r, \|\cdot\|\}$. We also have
\be
\rho\left(S_{\alpha ,\beta}(K_{1}) S_{\alpha ,\beta}(K_{2})\right) \le \rho(K_1 K_2)^{\alpha + \beta} 
\label{radius_two_rho_s}
\ee 
for $\rho=r$. Moreover, we have
\be
\rho (S_{\alpha ,\beta} (K_{1}) \cdots  S_{\alpha  ,\beta}(K_{n}))  \le \rho \left((K_1 \cdots K_n )^{(\alpha)} \circ ((K_n \cdots K_1)^*)^{(\beta)}  \right) 
\label{glavnaSN_rho_s2}
\ee
$$ \le \delta \rho \left((K_1 \cdots K_n )^{(\frac{\alpha}{\alpha + \beta})} \circ ((K_n \cdots K_1)^*)^{(\frac{\beta}{\alpha + \beta})}  \right) 
\le \delta \cdot \rho (K_1 \cdots K_n )^{\frac{\alpha}{\alpha +\beta}} \, \rho (K_n \cdots K_1 )^{\frac{\beta}{\alpha +\beta}} ,$$
where $\delta = \max\{\|K_1 \cdots K_n\|_{\infty}, \|K_n \cdots K_1\|_{\infty} \}^{\alpha + \beta -1}$, and

\be
\rho\left(S_{\alpha ,\beta}(K)\right) \le \|K\|_{\infty} ^{\alpha + \beta -1} \rho\left(S_{\frac{\alpha}{\alpha +\beta}}(K)\right) \le \|K\|_{\infty} ^{\alpha + \beta -1}\rho (K),  
\label{Schwenk_rho_s2}
\ee 
\be
\rho(S_{\alpha  ,\beta}(K_1)+ \ldots + S_{\alpha  ,\beta}(K_m)) \le \rho\left(S_{\alpha  ,\beta}(K_1+ \ldots+ K_m)\right) 
\label{+several_s2}
\ee
$$\le \|K_1+ \ldots+ K_m\|_{\infty} ^{\alpha + \beta -1} \rho\left(S_{\frac{\alpha}{\alpha +\beta}}(K_1+ \ldots+ K_m)\right)\le \|K_1+ \ldots+ K_m\|_{\infty} ^{\alpha + \beta-1} \rho(K_1+ \ldots+ K_m)$$
for all $\rho \in \{ r, r_{ess}, \gamma, \|\cdot\|, w \}$.
We also have

\be
\rho\left(S_{\alpha ,\beta}(K_{1}) S_{\alpha ,\beta}(K_{2})\right) \le \rho \left((K_1 K_2 )^{(\alpha)} \circ ((K_2 K_1)^*)^{(\beta)}  \right) 
\label{radius_two_rho_s2}
\ee
$$ \le \max\{\|K_1 K_2\|_{\infty}, \|K_2K_1\|_{\infty} \}^{\alpha + \beta -1} \rho \left((K_1 K_2 )^{(\frac{\alpha}{\alpha + \beta})} \circ ((K_2 K_1)^*)^{(\frac{\beta}{\alpha + \beta})}  \right)$$
$$\le \max\{\|K_1 K_2\|_{\infty}, \|K_2K_1\|_{\infty} \}^{\alpha + \beta -1}\rho(K_1 K_2)  \;\;\;\;\;\; \;\;\;\;\;\;\;\;\;\;\;\;\;\;\;\;\;\;\;\;\;\;\;\;\;\;\;\;\;\;\;\;\;$$
for all $\rho \in \{ r, r_{ess}\}$.
\end{proposition}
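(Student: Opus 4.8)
The plan is to follow the proof of Proposition~\ref{weig_sym} almost verbatim, replacing the single weight $\alpha\in[0,1]$ by the pair $(\alpha,\beta)$ with $\alpha+\beta\ge1$ and using, in place of (\ref{very_good}), the parts of Theorem~\ref{good_work}(ii) and of Theorem~\ref{severalH} that require only $s_n\ge1$. Throughout I will use a few elementary facts on $l^2(R)$: if a nonnegative matrix $K$ defines an operator on $l^2(R)$ then so does its transpose $K^*$ (whose kernel is $k(y,x)$), and one has $\|K^*\|_\infty=\|K\|_\infty$ and $\rho(K^*)=\rho(K)$ for every $\rho\in\{r,r_{ess},\gamma,\|\cdot\|,w\}$ (for $r$ and $r_{ess}$ because passing to the adjoint conjugates the spectrum, resp.\ the essential spectrum; for $\|\cdot\|$, $\gamma$ and $w$ by standard Hilbert space arguments), $\rho(AB)=\rho(BA)$ for $\rho\in\{r,r_{ess}\}$, and $\rho$ is monotone under $0\le A\le B$; moreover $l^2(R)\in\mathcal{L}$ and $l^2(R)$ together with its dual have order continuous norms.

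First I would prove (\ref{glavnaSN_rho_s}). Apply Theorem~\ref{good_work}(ii) with $l=n$ product factors and two Hadamard factors (so the ``$n$'' of that theorem equals $2$), with weights $\alpha_1=\alpha$, $\alpha_2=\beta$ (hence $s_2=\alpha+\beta\ge1$), and with $K_{i1}=K_i$, $K_{i2}=K_i^*$. Then the operator $H$ of Theorem~\ref{good_work} equals $S_{\alpha,\beta}(K_1)\cdots S_{\alpha,\beta}(K_n)$, while $K_{11}\cdots K_{n1}=K_1\cdots K_n$ and $K_{12}\cdots K_{n2}=K_1^*\cdots K_n^*=(K_n\cdots K_1)^*$. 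Since $s_n=\alpha+\beta\ge1$, inequality (\ref{very_good}) holds for $\rho\in\{\|\cdot\|,r\}$, and together with $\rho\big((K_n\cdots K_1)^*\big)=\rho(K_n\cdots K_1)$ it yields exactly (\ref{glavnaSN_rho_s}). The case $n=1$ is (\ref{Schwenk_rho_s}), and the case $n=2$, combined with $\rho(K_1K_2)=\rho(K_2K_1)$, is (\ref{radius_two_rho_s}). For (\ref{+several_s}) I would note that the kernel of $\sum_{i=1}^m S_{\alpha,\beta}(K_i)$ is $\sum_i k_i^\alpha(x,y)\,k_i^\beta(y,x)$, which by (\ref{H}) (applied with $f_i=k_i(x,y)$, $g_i=k_i(y,x)$ and exponents $\alpha,\beta$, permissible since $\alpha+\beta\ge1$) is at most $\big(\sum_i k_i(x,y)\big)^\alpha\big(\sum_i k_i(y,x)\big)^\beta$, the kernel of $S_{\alpha,\beta}(K_1+\cdots+K_m)$ (using $(K_1+\cdots+K_m)^*=K_1^*+\cdots+K_m^*$); monotonicity of $\rho$ and (\ref{Schwenk_rho_s}) then finish (\ref{+several_s}).

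Next I would obtain the refined chains by the analogous substitution in Theorem~\ref{severalH}: $l=n$, two Hadamard factors, $\alpha_1=\alpha$, $\alpha_2=\beta$, $K_{i1}=K_i$, $K_{i2}=K_i^*$. Then $H_1=K_1\cdots K_n$, $H_2=(K_n\cdots K_1)^*$, $M=\max\{\|K_1\cdots K_n\|_\infty,\|K_n\cdots K_1\|_\infty\}$ (using $\|K^*\|_\infty=\|K\|_\infty$), so the constant $M^{s_n-1}$ in Theorem~\ref{severalH} equals $\delta$, and the rescaled weights there are $\alpha/(\alpha+\beta)$ and $\beta/(\alpha+\beta)$. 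Since $L=l^2(R)$ and $L^*$ has order continuous norm, inequalities (\ref{very_good2}) hold for all $\rho\in\{r,r_{ess},\gamma,\|\cdot\|,w\}$, and after replacing $\rho\big((K_n\cdots K_1)^*\big)$ by $\rho(K_n\cdots K_1)$ they become precisely (\ref{glavnaSN_rho_s2}). Specializing to $n=1$ gives $\rho(S_{\alpha,\beta}(K))\le\|K\|_\infty^{\alpha+\beta-1}\rho\big(K^{(\alpha/(\alpha+\beta))}\circ(K^*)^{(\beta/(\alpha+\beta))}\big)=\|K\|_\infty^{\alpha+\beta-1}\rho\big(S_{\alpha/(\alpha+\beta)}(K)\big)$, and since $\alpha/(\alpha+\beta)\in[0,1]$, inequality (\ref{Schwenk_rho}) of Proposition~\ref{weig_sym} bounds the last factor by $\rho(K)$; this is (\ref{Schwenk_rho_s2}). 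The case $n=2$, together with $\rho(K_1K_2)=\rho(K_2K_1)$ for $\rho\in\{r,r_{ess}\}$, gives (\ref{radius_two_rho_s2}). Finally, (\ref{+several_s2}) follows from $\sum_i S_{\alpha,\beta}(K_i)\le S_{\alpha,\beta}\big(\sum_i K_i\big)$ (exactly as in the previous paragraph), monotonicity of $\rho$, and (\ref{Schwenk_rho_s2}) applied to $K=\sum_i K_i$.

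I expect the main difficulty to be organizational rather than conceptual: one must match the two-exponent situation to the $l\times n$ indexing of Theorems~\ref{good_work} and~\ref{severalH}, check at each step that the transpose $K^*$ of a nonnegative matrix inherits on $l^2(R)$ all the properties used (that it again defines a positive kernel operator, that $\|\cdot\|_\infty$, $r$, $r_{ess}$, $\gamma$, $\|\cdot\|$ and $w$ are unchanged under $K\mapsto K^*$, and that $r$, $r_{ess}$ are cyclic), and keep careful track of which subset of $\{r,r_{ess},\gamma,\|\cdot\|,w\}$ each displayed inequality is being asserted for (only $\{r,\|\cdot\|\}$ in (\ref{glavnaSN_rho_s}), (\ref{Schwenk_rho_s}), (\ref{+several_s}) and (\ref{radius_two_rho_s}); all five in (\ref{glavnaSN_rho_s2}), (\ref{Schwenk_rho_s2}) and (\ref{+several_s2}); only $\{r,r_{ess}\}$ in (\ref{radius_two_rho_s2})). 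No idea beyond the template of Proposition~\ref{weig_sym} and the auxiliary Theorems~\ref{good_work} and~\ref{severalH} is required.
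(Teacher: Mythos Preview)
Your proposal is correct and follows essentially the same route as the paper's own proof: apply Theorem~\ref{good_work}(ii) to get (\ref{glavnaSN_rho_s}), apply Theorem~\ref{severalH} (that is, (\ref{very_good2})) to get (\ref{glavnaSN_rho_s2}), specialize these to obtain (\ref{Schwenk_rho_s}), (\ref{Schwenk_rho_s2}), (\ref{radius_two_rho_s}) and (\ref{radius_two_rho_s2}), and combine (\ref{H}) with (\ref{Schwenk_rho_s}) and (\ref{Schwenk_rho_s2}) for (\ref{+several_s}) and (\ref{+several_s2}). One small slip in your final bookkeeping paragraph: (\ref{radius_two_rho_s}) is asserted only for $\rho=r$, not for $\{r,\|\cdot\|\}$, since its derivation uses the cyclic identity $\rho(K_1K_2)=\rho(K_2K_1)$.
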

\begin{proof} Inequalities (\ref{glavnaSN_rho_s}) are proved in similar way as  Inequalities (\ref{glavnaSN_rho})  by applying  Theorem \ref{good_work}(ii).  
Inequalities (\ref{glavnaSN_rho_s2}) follow from (\ref{very_good2}). Inequalities (\ref{Schwenk_rho_s}) and (\ref{Schwenk_rho_s2}) are special cases of   (\ref{glavnaSN_rho_s}) and  (\ref{glavnaSN_rho_s2}), respectively. 
Inequalities (\ref{+several_s}) and (\ref{+several_s2}) follow from (\ref{H}),  (\ref{Schwenk_rho_s}) and (\ref{Schwenk_rho_s2}), while Inequalities 
(\ref{radius_two_rho_s}) and (\ref{radius_two_rho_s2}) follow from (\ref{glavnaSN_rho_s}) and  (\ref{glavnaSN_rho_s2}).
\end{proof}

The following two results generalize \cite[Lemma 2.1]{Drn19}, \cite[Theorem 2.2]{Drn19} and \cite[Theorem 3.5]{P21a} by employing a similar but more general method of proof. 
\begin{lemma} (i) If $K$ is a positive kernel operator on $L^2(X,\mu)$ and $\alpha \in [0,1]$, then
\be
S_{\alpha}(K^2) \ge S_{\alpha}(K)^2.
\label{Roman_g}
\ee
(ii) If $K$ is a nonnegative matrix that defines an operator on $l^2 (R)$ and if $\alpha$ and $\beta$ are nonnegative numbers such that $\alpha + \beta \ge 1$, then 
\be
S_{\alpha, \beta}(K^2) \ge S_{\alpha , \beta}(K)^2.
\label{Roman_ge}
\ee
\end{lemma}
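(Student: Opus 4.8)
The plan is to reduce both inequalities, entry by entry (respectively, pointwise in $(x,y)$), to a single application of Hölder's inequality — in case (ii) this is literally Inequality (\ref{H}). Since an inequality between positive kernel operators is equivalent to the corresponding (a.e.) inequality between their kernels, it suffices to compare kernels.

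First I would write the kernels out explicitly. In case (i), let $k$ be the kernel of $K$, so that $K^2$ has kernel $k^{(2)}(x,y)=\int_X k(x,z)k(z,y)\,d\mu(z)$, $S_\alpha(K)$ has kernel $k^\alpha(x,y)k^{1-\alpha}(y,x)$, hence $S_\alpha(K)^2$ has kernel
\[ \int_X k^\alpha(x,z)\,k^{1-\alpha}(z,x)\,k^\alpha(z,y)\,k^{1-\alpha}(y,z)\,d\mu(z), \]
while $S_\alpha(K^2)$ has kernel $\big(k^{(2)}(x,y)\big)^\alpha\big(k^{(2)}(y,x)\big)^{1-\alpha}$. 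Fixing $x,y$ and setting $f(z):=k(x,z)k(z,y)$, $g(z):=k(y,z)k(z,x)$, one checks directly that $f(z)^\alpha g(z)^{1-\alpha}$ equals the integrand displayed above and that $\int_X f\,d\mu=k^{(2)}(x,y)$, $\int_X g\,d\mu=k^{(2)}(y,x)$. Thus (\ref{Roman_g}) is exactly the statement that
\[ \int_X f^\alpha g^{1-\alpha}\,d\mu \;\le\; \Big(\int_X f\,d\mu\Big)^\alpha\Big(\int_X g\,d\mu\Big)^{1-\alpha} \]
for a.e. $(x,y)$, which is Hölder's inequality with conjugate exponents $p=1/\alpha$, $q=1/(1-\alpha)$ (the cases $\alpha\in\{0,1\}$ being trivial): $\int f^\alpha g^{1-\alpha}\le\|f^\alpha\|_p\,\|g^{1-\alpha}\|_q=(\int f)^\alpha(\int g)^{1-\alpha}$. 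For part (ii) the computation is verbatim the same with $\mu$ the counting measure on $R$ and with the pair $(\alpha,1-\alpha)$ replaced by $(\alpha,\beta)$: writing $f_l:=k(i,l)k(l,j)$, $g_l:=k(j,l)k(l,i)$, the $(i,j)$-entry of $S_{\alpha,\beta}(K)^2$ is $\sum_l f_l^\alpha g_l^\beta$ and that of $S_{\alpha,\beta}(K^2)$ is $(\sum_l f_l)^\alpha(\sum_l g_l)^\beta$, so (\ref{Roman_ge}) is precisely Inequality (\ref{H}) (for finite $R$ directly, and for $R=\NN$ after letting the number of terms tend to $\infty$). Before carrying this out I would record, as already noted in the text, that everything is well defined: in case (i) each Hadamard product that occurs has exponent sum $\alpha+(1-\alpha)=1$, and in case (ii) exponent sum $\alpha+\beta\ge 1$, so by Theorem \ref{good_work}(ii) the operators $S_{\alpha,\beta}(K)$, $S_{\alpha,\beta}(K^2)$ and the square $S_{\alpha,\beta}(K)^2$ all define operators on $l^2(R)$ (and likewise on $L^2(X,\mu)$ in case (i)).

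I do not expect a genuine obstacle: the generalization over the unweighted case of \cite[Lemma 2.1]{Drn19} amounts precisely to replacing Cauchy–Schwarz by Hölder, i.e. allowing $\alpha\ne 1/2$. The only points requiring mild care are the bookkeeping identity $f^\alpha g^{1-\alpha}=k^\alpha(x,z)k^{1-\alpha}(z,x)k^\alpha(z,y)k^{1-\alpha}(y,z)$ (so that Hölder delivers exactly the claimed kernel inequality, not a weaker one), the measurability and finiteness needed to invoke Tonelli and Hölder pointwise in $(x,y)$ (automatic from nonnegativity of the kernels and boundedness of all the operators involved), and, in the matrix case with $R=\NN$, the routine passage from the finite-sum inequality (\ref{H}) to its countable version.
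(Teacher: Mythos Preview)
Your proof is correct and matches the paper's argument essentially line for line: both compute the kernels of $S_\alpha(K^2)$ and $S_\alpha(K)^2$ and reduce the comparison to H\"older's inequality with exponents $1/\alpha$ and $1/(1-\alpha)$, and for part~(ii) both invoke the discrete version (\ref{H}) (which the paper cites as \cite[Proposition 4.1]{P06}). Your write-up is in fact somewhat more explicit than the paper's, spelling out the auxiliary functions $f,g$ and the well-definedness via Theorem~\ref{good_work}(ii), but the approach is the same.
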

\begin{proof} The kernel of $S_{\alpha}(K^2)$ at $(x,y) \in X \times X$ equals
$$\left(\int _X k(x,z)k(z,y) d\mu (z)\right)^{\alpha} \left(\int _X k(y,z)k(z,x) d\mu (z)\right)^{1-\alpha}. $$
By Hoelder's inequality this is larger or equal to 
$$\int _X (k(x,z)k(z,y))^{\alpha} (k(y,z)k(z,x) )^{1-\alpha}d\mu (z) $$
$$= \int _X k(x,z)^{\alpha}k(z,x)^{1-\alpha}k(z,y)^{\alpha} k(y,z)^{1-\alpha}d\mu (z)$$
and this equals the kernel of $S_{\alpha}(K)^2$ at $(x,y)$, which proves (\ref{Roman_g}).

Inequality (\ref{Roman_ge}) is proved in a similar way by \cite[Proposition 4.1]{P06}. 
\end{proof}

\begin{theorem} (i) Let $K$ be a positive kernel operator on $L^2(X,\mu)$, $\alpha \in [0,1]$ and let 
$\rho_n = \rho (S_{\alpha}( K ^{2^n}))^{2^{-n}} $ for $n \in \mathbb{N}\cup \{0\}$ and $ \rho \in \{r, r_{ess}\}$. Then for each $n$
$$ \rho (S_{\alpha}(K))= \rho_0 \le \rho_1 \le \cdots \le \rho_n \le \rho (K).$$

(ii) Let $K$ be a nonnegative matrix that defines an operator on $l^2 (R)$ and $\alpha$ and $\beta$ nonnegative numbers such that $\alpha + \beta \ge 1$. If 
$r_n = \rho (S_{\alpha, \beta}( K ^{2^n}))^{2^{-n}} $ for $n \in \mathbb{N}\cup \{0\}$ and $\rho \in \{r, r_{ess}\}$, then 
$$ \rho (S_{\alpha, \beta}(K))= r_0 \le r_1 \le \cdots \le r_n \le \min\{ \rho (K)^{\alpha +\beta}, \|K^{2^n}\|_{\infty} ^{\frac{\alpha + \beta -1}{2^n}}\rho (K)\} \;\mathrm{for}\;
\rho =r,$$
$$ \rho (S_{\alpha, \beta}(K))= r_0 \le r_1 \le \cdots \le r_n \le  \|K^{2^n}\|_{\infty} ^{\frac{\alpha + \beta -1}{2^n}}\rho (K)\; \;\mathrm{for}\;
\rho =r_{ess} \;\; \mathrm{and}$$
$$ r_n \le \|K^{2^n}\|_{\infty} ^{\frac{\alpha + \beta -1}{2^n}} \rho\left(S_{\frac{\alpha}{\alpha +\beta}}(K^{2^n})\right)^{2^{-n}} \le \|K^{2^n}\|_{\infty} ^{\frac{\alpha + \beta -1}{2^n}}\rho (K).$$
\end{theorem}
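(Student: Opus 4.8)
The plan is to isolate two one-step facts and then chain them. The first is the one-step monotonicity $\rho_n \le \rho_{n+1}$ (resp. $r_n \le r_{n+1}$); the second is the one-step upper bound $\rho_n \le \rho(K)$ (resp. the various upper bounds for $r_n$). The full chains then follow by a trivial induction, together with the trivial identity $\rho_0 = \rho(S_\alpha(K))$ (resp. $r_0 = \rho(S_{\alpha,\beta}(K))$), which is just the case $n=0$.

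For part (i), first observe that $K^{2^n}$ is again a positive kernel operator on $L^2(X,\mu)$, so the preceding Lemma applies to it: by (\ref{Roman_g}) with $K$ replaced by $K^{2^n}$,
$$S_\alpha(K^{2^{n+1}}) = S_\alpha\big((K^{2^n})^2\big) \ge S_\alpha(K^{2^n})^2 .$$
Since for $\rho \in \{r, r_{ess}\}$ one has both monotonicity on positive operators and the power identity $\rho(A^2) = \rho(A)^2$ for every bounded operator $A$ — for $r$ by the spectral mapping theorem and for $r_{ess}$ directly from the limit/infimum formula (\ref{esslim=inf}) — applying $\rho$ gives $\rho(S_\alpha(K^{2^{n+1}})) \ge \rho(S_\alpha(K^{2^n}))^2$, and raising to the power $2^{-(n+1)}$ yields $\rho_{n+1} \ge \rho_n$. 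For the upper bound I would apply (\ref{Schwenk_rho}) of Proposition \ref{weig_sym} with $K$ replaced by $K^{2^n}$ to get $\rho(S_\alpha(K^{2^n})) \le \rho(K^{2^n}) = \rho(K)^{2^n}$, and take the $2^{-n}$-th root to obtain $\rho_n \le \rho(K)$.

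For part (ii) the scheme is identical. Here $K^{2^n}$ is again a nonnegative matrix defining an operator on $l^2(R)$; by Theorem \ref{good_work}(ii) (using $\alpha+\beta \ge 1$) the matrix $S_{\alpha,\beta}(K^{2^n})$ defines an operator on $l^2(R)$, and $\|K^{2^n}\|_\infty \le \|K^{2^n}\| < \infty$ by (\ref{lg}). The monotonicity $r_n \le r_{n+1}$ follows word for word as in (i), now from (\ref{Roman_ge}) in place of (\ref{Roman_g}). For the upper bounds I would apply, with $K$ replaced by $K^{2^n}$, inequality (\ref{Schwenk_rho_s}) (valid for $\rho = r$), giving $\rho(S_{\alpha,\beta}(K^{2^n})) \le \rho(K^{2^n})^{\alpha+\beta} = \rho(K)^{2^n(\alpha+\beta)}$ and hence $r_n \le \rho(K)^{\alpha+\beta}$; and inequality (\ref{Schwenk_rho_s2}) (valid for $\rho \in \{r, r_{ess}\}$), giving
$$\rho\big(S_{\alpha,\beta}(K^{2^n})\big) \le \|K^{2^n}\|_\infty^{\alpha+\beta-1}\, \rho\big(S_{\frac{\alpha}{\alpha+\beta}}(K^{2^n})\big) \le \|K^{2^n}\|_\infty^{\alpha+\beta-1}\, \rho(K^{2^n}) = \|K^{2^n}\|_\infty^{\alpha+\beta-1}\, \rho(K)^{2^n}.$$
Taking $2^{-n}$-th roots produces the two remaining displayed estimates, and taking the minimum for $\rho = r$ finishes the proof.

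I do not expect a genuine obstacle: the whole argument is an induction built on the preceding Lemma, the Schwenk-type bounds (\ref{Schwenk_rho}), (\ref{Schwenk_rho_s}), (\ref{Schwenk_rho_s2}), and the power identity $\rho(A^2) = \rho(A)^2$. The only points that need a little care are verifying that this power identity really holds for the essential spectral radius — which is immediate from (\ref{esslim=inf}) since the subsequence $\gamma(A^{2j})^{1/(2j)}$ has the same limit as $\gamma(A^m)^{1/m}$ — and checking that all standing hypotheses (positivity, the fact that the relevant Hadamard weighted geometric means define operators on the space, and finiteness of $\|K^{2^n}\|_\infty$) transfer from $K$ to each power $K^{2^n}$.
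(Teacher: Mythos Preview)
Your proposal is correct and follows essentially the same route as the paper: the one-step monotonicity comes from the lemma inequalities (\ref{Roman_g}) and (\ref{Roman_ge}) together with $\rho(A^2)=\rho(A)^2$, and the upper bounds come from (\ref{Schwenk_rho}), (\ref{Schwenk_rho_s}) and (\ref{Schwenk_rho_s2}) applied to $K^{2^n}$. The only cosmetic difference is that for part (ii) the paper cites (\ref{radius_two_rho_s2}) (with $K_1=K_2=K$) for the monotonicity step rather than (\ref{Roman_ge}), but these are equivalent here.
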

\begin{proof} To prove (i) we first observe 
 that by (\ref{Roman_g}) (or by  (\ref{radius_two_rho_first})) we have
\be
\rho (S_{\alpha}(K^2) )\ge \rho (S_{\alpha}(K)^2) = \rho (S_{\alpha}(K))^2.
\label{Roman2}
\ee
By  (\ref{Schwenk_rho}) $\rho (S_{\alpha}(K ^{2^n})) \le \rho (K ^{2^n}) =   \rho (K) ^{2^n}$ and so $\rho_n \le \rho (K)$. Since $\rho_{n-1} \le \rho_{n}$ for all $n\in \mathbb{N}$ by (\ref{Roman2}) the proof of (i) is completed.

In a similar way (ii) is proved by applying  (\ref{Schwenk_rho_s}),  (\ref{Schwenk_rho_s2}) and (\ref{radius_two_rho_s2}).
\end{proof}
The following result generalizes and extends \cite[Theorem 2.2 and Theorem 3.2 (3)]{H07}.
\begin{proposition}
Let $K$ be a positive kernel operators on $L^2(X,\mu)$ and $\alpha \in [0,1]$. Then 
for all $\rho \in \{ r, r_{ess}, \gamma, \|\cdot\|, w \}$ and $n\in \mathbb{N}$ we have
\be
\rho\left(S (K)\right) \le \rho\left(S_{\alpha}(K)\right) \le \rho (K)  \;\;\mathrm{and} 
\label{Schwenk_rho_good}
\ee 
\be
\rho\left(S (K^n)\right)^{\frac{1}{n}} \le \rho\left(S_{\alpha}(K^n)\right)^{\frac{1}{n}} \le \rho (K). 
\label{Schwenk_rho_good2}
\ee 
\end{proposition}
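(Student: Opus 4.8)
The plan is to deduce both inequality chains in (\ref{Schwenk_rho_good}) and (\ref{Schwenk_rho_good2}) by combining the already-established Proposition \ref{weig_sym} with the observation that the full geometric symmetrization $S(K)=S_{1/2}(K)$ is a \emph{weighted} symmetrization whose kernel is pointwise dominated by that of $S_{\alpha}(K)$ composed with $S_{1-\alpha}(K)$ in a way that lets us apply (\ref{very_good}) once more. The right-hand inequalities in both displays are immediate: the inequality $\rho(S_{\alpha}(K))\le\rho(K)$ is exactly (\ref{Schwenk_rho}), and applying it to $K^n$ in place of $K$ together with $\rho(K^n)=\rho(K)^n$ (valid for $\rho\in\{r,r_{ess}\}$ and, for $\rho\in\{\gamma,\|\cdot\|,w\}$, using instead submultiplicativity $\rho(K^n)\le\rho(K)^n$, which still gives the stated bound after taking $n$-th roots) yields the right-hand inequality in (\ref{Schwenk_rho_good2}).

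The substantive point is the left-hand inequality $\rho(S(K))\le\rho(S_{\alpha}(K))$. First I would record the pointwise kernel identity
$$
\sqrt{k(x,y)k(y,x)}=\bigl(k(x,y)^{\alpha}k(y,x)^{1-\alpha}\bigr)^{1/2}\bigl(k(y,x)^{\alpha}k(x,y)^{1-\alpha}\bigr)^{1/2},
$$
which shows that $S(K)=S_{\alpha}(K)^{(1/2)}\circ\bigl(S_{\alpha}(K)^{*}\bigr)^{(1/2)}=S\bigl(S_{\alpha}(K)\bigr)$, i.e.\ the full symmetrization of $K$ equals the full symmetrization of the positive kernel operator $S_{\alpha}(K)$. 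Now apply (\ref{Schwenk_rho}) (the case $\alpha=1/2$, which is the ordinary $S(\cdot)$) to the operator $S_{\alpha}(K)$ in place of $K$: this gives $\rho\bigl(S(S_{\alpha}(K))\bigr)\le\rho(S_{\alpha}(K))$, and by the identity just noted the left side is $\rho(S(K))$. This proves the first inequality of (\ref{Schwenk_rho_good}) for all $\rho\in\{r,r_{ess},\gamma,\|\cdot\|,w\}$ in one stroke. For (\ref{Schwenk_rho_good2}) one simply repeats the argument with $K^n$ replacing $K$: $S(K^n)=S\bigl(S_{\alpha}(K^n)\bigr)$, so $\rho(S(K^n))\le\rho(S_{\alpha}(K^n))$, and taking $n$-th roots gives the left-hand inequality; chaining with (\ref{Schwenk_rho}) applied to $K^n$ and the spectral-radius/submultiplicativity identity for $\rho(K^n)$ gives the full chain.

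The main obstacle is essentially bookkeeping rather than a deep difficulty: one must make sure that all five functionals $\rho\in\{r,r_{ess},\gamma,\|\cdot\|,w\}$ are legitimately covered at each step. For $r$ and $r_{ess}$ nothing extra is needed; for $\gamma$, $\|\cdot\|$ and $w$ one uses that these are submultiplicative (so $\rho(K^n)\le\rho(K)^n$, giving the stated upper bound after the $n$-th root) and that (\ref{Schwenk_rho}) was already proved for them in Proposition \ref{weig_sym} under the standing hypothesis that $L^2(X,\mu)$ and its dual have order continuous norms (automatic here, as $L^2$ is reflexive) and that $w$ makes sense only on $L^2$, which is exactly our setting. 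I would also note explicitly that $S_{\alpha}(K)$ is again a \emph{positive} kernel operator on $L^2(X,\mu)$ (its kernel $k^{\alpha}(x,y)k^{1-\alpha}(y,x)$ is nonnegative and measurable), so that the earlier results genuinely apply to it; this is the only place where one must be slightly careful, and it is routine. Hence the proof reduces to the two-line identity $S(K)=S(S_{\alpha}(K))$ plus an application of results already in hand.
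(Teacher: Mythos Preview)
Your proof is correct and takes essentially the same approach as the paper: the key identity $S(K)=S(S_{\alpha}(K))$ combined with (\ref{Schwenk_rho}) yields (\ref{Schwenk_rho_good}), and (\ref{Schwenk_rho_good2}) follows by substituting $K^n$ for $K$ and taking $n$-th roots. One minor terminological point: for $\rho=w$ the inequality $w(K^n)\le w(K)^n$ is Berger's power inequality rather than a consequence of submultiplicativity (the numerical radius is not submultiplicative in general), but the conclusion is unaffected.
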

\begin{proof} Since $S(K)=S(S_{\alpha}(K))$ Inequalities (\ref{Schwenk_rho_good}) follow from (\ref{Schwenk_rho}). Inequalities (\ref{Schwenk_rho_good2}) follow from (\ref{Schwenk_rho_good}).
\end{proof}
The following result generalizes and extends \cite[Theorems 2.3 and 3.3]{H07}. It is proved in similar way as \cite[Theorem 2.3]{H07} by applying (\ref{Schwenk_rho}). To avoid too much repetition of ideas we omit the details of the proof.
\begin{theorem} Let $K$ be a positive kernel operators on $L^2(X,\mu)$. \\
For $\rho \in \{ r, r_{ess}, \gamma, \|\cdot\|, w \}$ and $\alpha \in [0,1]$ define $f_{\rho} (\alpha)= \rho (S_{\alpha}(K))$. Then $f_{\rho}$ is decreasing in $[0, 0.5]$ and increasing in $[0.5, 1]$.
\end{theorem}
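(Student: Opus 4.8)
The plan is to reduce the statement to two facts that are already available --- the Schwenk-type inequality (\ref{Schwenk_rho}) and adjoint-invariance of $\rho$ --- by means of one short structural observation about the family $S_\alpha(K)$. First, recall that $S_\alpha(K)^* = S_{1-\alpha}(K)$ and that $\rho(T)=\rho(T^*)$ for every $\rho\in\{r,r_{ess},\gamma,\|\cdot\|,w\}$ on $L^2(X,\mu)$ (this adjoint-invariance is exactly what was used in the proof of Proposition \ref{weig_sym}). Hence
\[
f_\rho(\alpha)=\rho(S_\alpha(K))=\rho\bigl(S_\alpha(K)^*\bigr)=\rho(S_{1-\alpha}(K))=f_\rho(1-\alpha),
\]
so $f_\rho$ is symmetric about $\alpha=1/2$, and it suffices to prove that $f_\rho$ is decreasing on $[0,1/2]$; the behaviour on $[1/2,1]$ then follows by reflection.

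The second ingredient is the ``semigroup'' identity
\[
S_\beta\bigl(S_\alpha(K)\bigr)=S_{\,\alpha\beta+(1-\alpha)(1-\beta)}(K),\qquad \alpha,\beta\in[0,1],
\]
which I would verify directly on kernels: writing $a=k(x,y)$ and $b=k(y,x)$, the kernel of $S_\alpha(K)$ at $(x,y)$ is $a^\alpha b^{1-\alpha}$, so the kernel of its adjoint is $a^{1-\alpha}b^\alpha$, and forming $S_\beta$ of this operator yields the kernel $a^{\alpha\beta+(1-\alpha)(1-\beta)}\,b^{(1-\alpha)\beta+\alpha(1-\beta)}$, whose two exponents sum to $1$. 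Now fix $0\le\alpha_1<1/2$ and set $\theta(\beta):=\alpha_1\beta+(1-\alpha_1)(1-\beta)=(1-\alpha_1)-\beta(1-2\alpha_1)$. This is affine in $\beta$ with negative slope, so as $\beta$ runs over $[0,1]$ the exponent $\theta(\beta)$ decreases from $1-\alpha_1$ to $\alpha_1$ and therefore takes every value in $[\alpha_1,1-\alpha_1]$. Consequently, for each $\theta\in[\alpha_1,1-\alpha_1]$ there is $\beta\in[0,1]$ with $S_\theta(K)=S_\beta(S_{\alpha_1}(K))$, and applying (\ref{Schwenk_rho}) to the positive kernel operator $S_{\alpha_1}(K)$ in place of $K$ gives
\[
f_\rho(\theta)=\rho(S_\theta(K))=\rho\bigl(S_\beta(S_{\alpha_1}(K))\bigr)\le\rho(S_{\alpha_1}(K))=f_\rho(\alpha_1).
\]
In particular, whenever $0\le\alpha_1\le\alpha_2\le1/2$ we have $\alpha_2\in[\alpha_1,1-\alpha_1]$, hence $f_\rho(\alpha_2)\le f_\rho(\alpha_1)$ (the case $\alpha_1=\alpha_2=1/2$ being trivial), so $f_\rho$ is decreasing on $[0,1/2]$. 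Combining with the symmetry $f_\rho(\alpha)=f_\rho(1-\alpha)$ then shows $f_\rho$ is increasing on $[1/2,1]$, which completes the argument. An equivalent route avoids (\ref{Schwenk_rho}) altogether: write $S_{\alpha_2}(K)=S_{\alpha_1}(K)^{(\lambda)}\circ S_{1-\alpha_1}(K)^{(1-\lambda)}$ with $\lambda=\frac{1-\alpha_1-\alpha_2}{1-2\alpha_1}\in[0,1]$ and apply (\ref{gl1vecr}) together with $\rho(S_{1-\alpha_1}(K))=\rho(S_{\alpha_1}(K))$.

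I do not anticipate a genuine obstacle: the content is essentially the kernel identity $S_\beta\circ S_{\alpha_1}=S_{\theta(\beta)}$ plus the already-proved inequality (\ref{Schwenk_rho}). The only points that need a line of care are (i) checking that $S_{\alpha_1}(K)$, and $S_\beta$ of it, is a bona fide positive kernel operator defined on all of $L^2(X,\mu)$ so that (\ref{Schwenk_rho}) applies --- this follows from the inequality between the weighted arithmetic and geometric means, since $S_\alpha(H)\le\alpha H+(1-\alpha)H^*$ as recorded after Theorem \ref{good_work}, and $L^2$ together with its dual has order continuous norm so the cases $\rho\in\{r_{ess},\gamma,w\}$ are covered as in Proposition \ref{weig_sym}; and (ii) the degenerate endpoints $\alpha_1=0$ and $\beta\in\{0,1\}$, where $S_\alpha(K)$ collapses to $K$ or $K^*$ and the inequalities are trivial. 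Once the semigroup identity is in place, monotonicity is immediate, which is also why the original source can legitimately say the proof mirrors \cite[Theorem 2.3]{H07}.
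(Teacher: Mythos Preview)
Your proposal is correct and follows essentially the approach the paper indicates: the paper omits the details but states that the result ``is proved in similar way as \cite[Theorem 2.3]{H07} by applying (\ref{Schwenk_rho})'', and your argument does precisely this --- the composition identity $S_\beta(S_{\alpha_1}(K))=S_{\alpha_1\beta+(1-\alpha_1)(1-\beta)}(K)$ together with (\ref{Schwenk_rho}) applied to $S_{\alpha_1}(K)$ is the natural route, and its special case $S(S_\alpha(K))=S(K)$ already appears in the proof of the preceding proposition. Your alternative formulation via (\ref{gl1vecr}) with $S_{\alpha_2}(K)=S_{\alpha_1}(K)^{(\lambda)}\circ S_{1-\alpha_1}(K)^{(1-\lambda)}$ is equivalent, since (\ref{Schwenk_rho}) is itself derived from (\ref{gl1vecr}).
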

\section{Additional results on weighted geometric means}

The following refinement of inequality (\ref{gl1vecr}) was proved in \cite[Corollary 3.10]{P18b}.

\begin{theorem}
Let $K_1, \ldots ,K _n$ be positive kernel operators on a Banach function space $L$. 
If 
 $\alpha _1, \ldots ,\alpha _n$  are positive numbers such that 
$\sum _{i=1} ^n \alpha _i = 1$ and if  $m \in \NN$ then
\be
\rho (K_1 ^{( \alpha _1)} \circ \cdots \circ K_n ^{(\alpha _n)} ) \le \rho ((K_1 ^m ) ^{( \alpha _1)} \circ \cdots \circ (K_n ^m) ^{(\alpha _n)} ) ^{ \frac{1}{m}} \le
\rho (K_1)^{ \alpha _1} \, \cdots \rho (K_n)^{\alpha _n} 
\label{rad_ref}
\ee
for $\rho =r$.

If, in addition, $L$ and $L^*$ have order continuous norms then Inequalities (\ref{rad_ref}) hold also for $\rho=r_{ess}$.

\label{ref_powers}
\end{theorem}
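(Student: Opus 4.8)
The plan is to derive Inequalities (\ref{rad_ref}) from the already-available Theorem \ref{good_work}, specifically from the chain (\ref{very_good}) applied to a cleverly chosen array of operators. The key observation is that $(K_i^m)^{(\alpha_i)}$ is the Hadamard power of the $m$-fold ordinary product $K_i \cdots K_i$, so the right-hand block of (\ref{very_good}) with $l=m$ rows and $n$ columns, all entries in column $i$ equal to $K_i$, produces exactly the middle term of (\ref{rad_ref}). Concretely, set $K_{ji} := K_i$ for $j=1,\dots,m$ and $i=1,\dots,n$, so that $H_i = K_{1i}\cdots K_{mi} = K_i^m$, and
$$ H = \bigl(K_1^{(\alpha_1)} \circ \cdots \circ K_n^{(\alpha_n)}\bigr)^m . $$
Then (\ref{very_good}) gives $\rho(H) \le \rho\bigl((K_1^m)^{(\alpha_1)} \circ \cdots \circ (K_n^m)^{(\alpha_n)}\bigr) \le \rho(K_1^m)^{\alpha_1}\cdots\rho(K_n^m)^{\alpha_n}$.

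The second step is to extract the $m$-th root. For $\rho = r$ one has $r(H) = r\bigl((K_1^{(\alpha_1)}\circ\cdots\circ K_n^{(\alpha_n)})^m\bigr) = r(K_1^{(\alpha_1)}\circ\cdots\circ K_n^{(\alpha_n)})^m$ by the spectral mapping property of the spectral radius for powers; raising the displayed inequality to the power $1/m$ then yields the left inequality of (\ref{rad_ref}). Similarly, $\rho(K_i^m)^{\alpha_i} = \rho(K_i)^{m\alpha_i}$, so the right-hand term becomes $\bigl(\rho(K_1)^{\alpha_1}\cdots\rho(K_n)^{\alpha_n}\bigr)^m$, and the $1/m$-th power gives the right inequality of (\ref{rad_ref}). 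For the essential spectral radius case, under the hypothesis that $L$ and $L^*$ have order continuous norms, Theorem \ref{good_work} guarantees (\ref{very_good}) holds for $\rho = r_{ess}$, and one uses the analogous identity $r_{ess}(A^m) = r_{ess}(A)^m$ (valid since $r_{ess}$ is the spectral radius in the Calkin algebra, which again satisfies the spectral mapping theorem for powers) to conclude in exactly the same way.

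The only point requiring a little care — and the mild obstacle — is justifying that $r(A^m) = r(A)^m$ and $r_{ess}(A^m) = r_{ess}(A)^m$ for the relevant operators $A = K_1^{(\alpha_1)}\circ\cdots\circ K_n^{(\alpha_n)}$; this is standard (it follows from $r(A) = \lim_j \|A^j\|^{1/j}$ and, for $r_{ess}$, from (\ref{esslim=inf})), but it should be invoked explicitly. One should also note that $A$ is a genuine positive kernel operator defined on all of $L$, as recorded in the preliminaries, so all the quantities involved are finite and the manipulations are legitimate. No further ingredients are needed; the whole argument is a direct specialization of (\ref{very_good}) followed by taking $m$-th roots.
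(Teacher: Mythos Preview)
Your proposal is correct and follows essentially the same route as the paper. The paper itself does not reprove Theorem~\ref{ref_powers} (it is quoted from \cite[Corollary 3.10]{P18b}), but the identical argument is spelled out in the proof of Theorem~\ref{ugly_ref}: apply (\ref{basic2}) with all $l=m$ rows equal to the same tuple $(K_1,\dots,K_n)$ to obtain $(K_1^{(\alpha_1)}\circ\cdots\circ K_n^{(\alpha_n)})^m \le (K_1^m)^{(\alpha_1)}\circ\cdots\circ(K_n^m)^{(\alpha_n)}$, then use monotonicity, (\ref{gl1vecr}), and $r(A^m)=r(A)^m$ (respectively $r_{ess}(A^m)=r_{ess}(A)^m$) to extract the $m$-th root---exactly your specialization of (\ref{very_good}).
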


By iterating  (\ref{rad_ref}) we obtain its refinement.

\begin{corollary}
Let $K_1, \ldots ,K_n$ be positive kernel operators on a Banach function space $L$. 
If 
 $\alpha _1, \ldots ,\alpha _n$  are positive numbers such that 
$\sum _{i=1} ^n \alpha _i = 1$ and if  $m, l \in \NN$ then
\begin{eqnarray}
\nonumber
& & \rho (K_1 ^{( \alpha _1)} \circ \cdots \circ K_n ^{(\alpha _n)} )  \le  \rho  ((K_1 ^m ) ^{( \alpha _1)} \circ \cdots \circ (K_n ^m)  ^{(\alpha _n)} ) ^{ \frac{1}{m}} \\
&\le &  \rho ((K_1 ^{ml} ) ^{( \alpha _1)} \circ \cdots \circ (K_n ^{ml})  ^{(\alpha _n)} ) ^{ \frac{1}{ml}} \le
\rho (K_1)^{ \alpha _1} \, \cdots \rho (K_n)^{\alpha _n} 
\label{rad_ref_kl}
\end{eqnarray}
for $\rho =r$.

If, in addition, $L$ and $L^*$ have order continuous norms then Inequalities (\ref{rad_ref_kl}) hold also for $\rho=r_{ess}$.

\label{ref_powers_kl}
\end{corollary}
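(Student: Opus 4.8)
The plan is to obtain Corollary~\ref{ref_powers_kl} by applying Theorem~\ref{ref_powers} twice, with suitable choices of the exponent parameter, and then chaining the resulting inequalities. The key observation is that Theorem~\ref{ref_powers} is a statement of the form
$$\rho (K_1 ^{( \alpha _1)} \circ \cdots \circ K_n ^{(\alpha _n)} ) \le \rho ((K_1 ^m ) ^{( \alpha _1)} \circ \cdots \circ (K_n ^m) ^{(\alpha _n)} ) ^{ 1/m} \le \rho (K_1)^{ \alpha _1} \cdots \rho (K_n)^{\alpha _n},$$
and its conclusion holds for \emph{every} choice of positive kernel operators and of the integer parameter, under the standing hypothesis $\sum_i\alpha_i=1$.

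First I would apply Theorem~\ref{ref_powers} to the operators $K_1,\ldots,K_n$ with exponent $m$; this gives the first inequality of (\ref{rad_ref_kl}),
$$\rho (K_1 ^{( \alpha _1)} \circ \cdots \circ K_n ^{(\alpha _n)} ) \le \rho ((K_1 ^m ) ^{( \alpha _1)} \circ \cdots \circ (K_n ^m)  ^{(\alpha _n)} ) ^{ 1/m},$$
together with the bound by $\rho(K_1)^{\alpha_1}\cdots\rho(K_n)^{\alpha_n}$ which we keep in reserve. Next I would apply Theorem~\ref{ref_powers} a second time, now to the positive kernel operators $K_1^m,\ldots,K_n^m$ (which are again positive kernel operators on $L$) with exponent $l$; since $(K_i^m)^l=K_i^{ml}$, this yields
$$\rho ((K_1 ^m ) ^{( \alpha _1)} \circ \cdots \circ (K_n ^m) ^{(\alpha _n)} ) \le \rho ((K_1 ^{ml} ) ^{( \alpha _1)} \circ \cdots \circ (K_n ^{ml})  ^{(\alpha _n)} ) ^{ 1/l} \le \rho (K_1^m)^{ \alpha _1} \cdots \rho (K_n^m)^{\alpha _n}.$$
Raising the whole chain to the power $1/m$ and using $\rho(K_i^m)^{1/m}=\rho(K_i)$ for $\rho\in\{r,r_{ess}\}$ (which holds since $r(K^m)=r(K)^m$ and, under the order-continuity hypotheses, $r_{ess}(K^m)=r_{ess}(K)^m$ by (\ref{esslim=inf})) converts the last term into $\rho(K_1)^{\alpha_1}\cdots\rho(K_n)^{\alpha_n}$, and splices the two chains together into (\ref{rad_ref_kl}). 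The statement about the order-continuous-norm case for $\rho=r_{ess}$ follows because both invocations of Theorem~\ref{ref_powers} are valid for $r_{ess}$ under that hypothesis.

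I do not expect any genuine obstacle here: the content is entirely in Theorem~\ref{ref_powers}, and the corollary is a clean iteration. The only point requiring a word of care is the bookkeeping of exponents---making sure that after raising the $l$-step inequality to the power $1/m$ one indeed gets $1/(ml)$ in the middle exponent and that the tail term collapses correctly via $\rho(K_i^m)^{1/m}=\rho(K_i)$---but this is routine. One could also phrase the argument inductively (iterating over a factorization of the exponent into primes), but the two-step version above is the most transparent and matches the statement exactly.
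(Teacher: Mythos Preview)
Your proposal is correct and follows exactly the approach indicated in the paper, which simply says ``By iterating (\ref{rad_ref}) we obtain its refinement.'' You have spelled out precisely this iteration: apply Theorem~\ref{ref_powers} once with exponent $m$, then again to $K_1^m,\ldots,K_n^m$ with exponent $l$, raise to the $1/m$ power, and use $\rho(K_i^m)=\rho(K_i)^m$ to collapse the tail term.
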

The following result 
follows  from (\ref{rad_ref_kl}) and  Theorem \ref{severalH}. 
\begin{corollary}
Given $L$ in $\mathcal{L}$, let $K_1, \ldots , K_n$ be nonnegative matrices that define operators on $L$ and
$\alpha_1, \ldots, \alpha _n$ nonnegative numbers such that $s_n = \sum _{i=1} ^n \alpha _i \ge 1$. Let
$$ M = \max _{i=1,\ldots, n} \|K_i\|_{\infty},   \;\; \\ \\\\ \beta=M^{s_n-1} $$
and  $\beta _i =\frac{\alpha _i}{s_n}$ for $i=1, \ldots, n$.

Then
\begin{eqnarray}
\nonumber
 \rho (K_1 ^{( \alpha _1)} \circ \cdots \circ K_n ^{(\alpha _n)} )  &\le &  
\beta \rho(  K_1^{(\beta_1)} \circ \cdots \circ K_n^{(\beta_n)} ) \le 
\beta \rho ((K_1 ^m ) ^{( \beta _1)} \circ \cdots \circ (K_n ^m)  ^{(\beta _n)} ) ^{ \frac{1}{m}} \\
&\le & \beta  \rho ((K_1 ^{ml} ) ^{( \beta _1)} \circ \cdots \circ (K_n ^{ml})  ^{(\beta _n)} ) ^{ \frac{1}{ml}} \le
\beta \rho (K_1)^{ \beta _1} \, \cdots \rho (K_n)^{\beta _n} 
\label{new_geom_ref1}
\end{eqnarray}

for all $m, l \in \NN$ and $\rho=r$.

If, in addition, $L$ and $L^*$ have order continuous norms then Inequalities (\ref{new_geom_ref1}) hold also for $\rho=r_{ess}$.
\label{useful_cor}
\end{corollary}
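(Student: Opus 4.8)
The plan is to obtain the first inequality in (\ref{new_geom_ref1}) from Theorem \ref{severalH} and the remaining links from Corollary \ref{ref_powers_kl} applied with the weights $\beta_1,\dots,\beta_n$; the whole argument is a bookkeeping combination of these two quoted facts.

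First I would record that $\sum_{i=1}^n\beta_i=\frac{1}{s_n}\sum_{i=1}^n\alpha_i=1$, so $\beta_1,\dots,\beta_n$ is an admissible weight system. Since $L\in\mathcal{L}$ is a Banach function space over the counting measure and each $K_i$ defines a positive kernel (matrix) operator on $L$, so does every power $K_i^{ml}$; moreover each weighted geometric mean appearing in (\ref{new_geom_ref1}) is a positive kernel operator defined on all of $L$, being dominated by the corresponding convex combination $\sum_i\beta_i K_i^{ml}$ via the weighted arithmetic–geometric mean inequality (cf.\ (\ref{c_ineq})). Also $\beta=M^{s_n-1}<\infty$ because $M\le\max_i\|K_i\|<\infty$ by (\ref{lg}). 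Thus all quantities in the statement are well defined.

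Next I would specialize Theorem \ref{severalH} to $l=1$ with $K_{1i}:=K_i$. Then $H=K_1^{(\alpha_1)}\circ\cdots\circ K_n^{(\alpha_n)}$, $H_i=K_i$, $M=\max_i\|K_i\|_\infty$, $\beta=M^{s_n-1}$ and $\beta_i=\alpha_i/s_n$, so the first two links of (\ref{new_geom_2_H}) read
$$\rho\bigl(K_1^{(\alpha_1)}\circ\cdots\circ K_n^{(\alpha_n)}\bigr)\le\beta\,\rho\bigl(K_1^{(\beta_1)}\circ\cdots\circ K_n^{(\beta_n)}\bigr)$$
for $\rho\in\{r,\|\cdot\|\}$, and also for $\rho\in\{r_{ess},\gamma\}$ when $L$ and $L^*$ have order continuous norms. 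In particular this holds for $\rho=r$ unconditionally and for $\rho=r_{ess}$ under the extra hypothesis.

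Finally I would apply Corollary \ref{ref_powers_kl} to the positive kernel operators $K_1,\dots,K_n$ with the weights $\beta_1,\dots,\beta_n$ and integers $m,l$, obtaining
$$\rho\bigl(K_1^{(\beta_1)}\circ\cdots\circ K_n^{(\beta_n)}\bigr)\le\rho\bigl((K_1^m)^{(\beta_1)}\circ\cdots\circ(K_n^m)^{(\beta_n)}\bigr)^{1/m}\le\rho\bigl((K_1^{ml})^{(\beta_1)}\circ\cdots\circ(K_n^{ml})^{(\beta_n)}\bigr)^{1/(ml)}\le\rho(K_1)^{\beta_1}\cdots\rho(K_n)^{\beta_n}$$
for $\rho=r$, and for $\rho=r_{ess}$ under the extra hypothesis. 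Multiplying this chain by $\beta\ge 0$ and concatenating it with the inequality of the previous paragraph yields (\ref{new_geom_ref1}). I do not anticipate a genuine obstacle: the only delicate points are checking that the $\beta_i$ sum to $1$ and that all Hadamard powers and weighted geometric means in play are operators on $L$, both dispatched by the arithmetic–geometric mean domination noted above.
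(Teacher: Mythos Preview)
Your proposal is correct and follows essentially the same route as the paper, which simply records that the corollary ``follows from (\ref{rad_ref_kl}) and Theorem \ref{severalH}'': you specialize Theorem \ref{severalH} with $l=1$ to get the first link and then invoke Corollary \ref{ref_powers_kl} with the normalized weights $\beta_i$ for the rest. The only cosmetic point is that the statement allows some $\alpha_i=0$ while Corollary \ref{ref_powers_kl} is phrased for strictly positive weights, but this is harmless (drop the corresponding factors) and the paper itself does not comment on it.
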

Next we prove (with a standard method from e.g. \cite{DP05} and \cite{P06}) that in the 
case of sequence spaces $L\in \mathcal{L}$ 
 inequalities (\ref{rad_ref_kl}) for the spectral radius hold  also under the condition 
$\sum _{i=1} ^n \alpha _i \ge 1$. In this case we also prove additional refinements of (\ref{rad_ref}).
\begin{theorem} 
\label{ugly_ref}
Given $L\in \mathcal{L}$, let 
$K_1, \ldots ,K _n$ be nonnegative matrices that define operators on $L$.  If 
 $\alpha _1, \ldots ,\alpha _n$  are nonnegative numbers such that 
$s_n = \sum _{i=1} ^n \alpha _i \ge 1$ and if  $m, l \in \NN$ and $\beta _i = \frac{\alpha _i}{ s_n}$ for all $i=1,\ldots, n $, then we have 
\begin{eqnarray}
\nonumber
& & r (K_1 ^{( \alpha _1)} \circ \cdots \circ K_n ^{(\alpha _n)} )  \le  r ((K_1 ^m ) ^{( \alpha _1)} \circ \cdots \circ (K_n ^m)  ^{(\alpha _n)} ) ^{ \frac{1}{m}} \\
\nonumber
&\le &  r ((K_1 ^{ml} ) ^{( \alpha _1)} \circ \cdots \circ (K_n ^{ml})  ^{(\alpha _n)} ) ^{ \frac{1}{ml}} \le 
r ((K_1 ^{m l}) ^{(\beta _1)} \circ \cdots \circ (K_n ^{ml}) ^{(\beta _n)} ) ^{ \frac{s_n}{ml}} \\
& \le &
r(K_1)^{ \alpha _1} \, \cdots r(K_n)^{\alpha _n} 
\label{rad_ref_kl_ref_sn}
\end{eqnarray}
and
\begin{eqnarray}
\nonumber
& &r (K_1 ^{( \alpha _1)} \circ \cdots \circ K_n ^{(\alpha _n)} ) \le  r ((K_1 ^m ) ^{( \alpha _1)} \circ \cdots \circ (K_n ^m) ^{(\alpha _n)} ) ^{ \frac{1}{m}}  \\
\nonumber
&\le &r ((K_1 ^m ) ^{(\beta _1)} \circ \cdots \circ (K_n ^m) ^{(\beta _n)} ) ^{ \frac{s_n}{m}} 
\le 
r ((K_1 ^{m l}) ^{(\beta _1)} \circ \cdots \circ (K_n ^{ml}) ^{(\beta _n)} ) ^{ \frac{s_n}{ml}} \\
&\le &
r(K_1)^{ \alpha _1} \, \cdots r(K_n)^{\alpha _n} .
\label{rad_ref_sm}
\end{eqnarray}
\end{theorem}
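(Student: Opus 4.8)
The plan is to reduce everything to the already-established inequalities, principally Theorem \ref{ref_powers}/Corollary \ref{ref_powers_kl} (which give the refinements through powers) and Theorem \ref{good_work}(ii) together with Theorem \ref{severalH} (which handle the passage from exponents $\alpha_i$ with $\sum\alpha_i\ge 1$ to the normalized exponents $\beta_i=\alpha_i/s_n$ with $\sum\beta_i=1$). The key observation is that for $L\in\mathcal L$, when $s_n\ge 1$ the Hadamard product $K_1^{(\alpha_1)}\circ\cdots\circ K_n^{(\alpha_n)}$ still defines a positive operator on $L$ (this is exactly the content invoked in Theorem \ref{good_work}(ii) and \ref{severalH}), so all the spectral radii appearing in \eqref{rad_ref_kl_ref_sn} and \eqref{rad_ref_sm} are well defined.

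First I would handle \eqref{rad_ref_sm}. The first inequality, $r(K_1^{(\alpha_1)}\circ\cdots\circ K_n^{(\alpha_n)})\le r((K_1^m)^{(\alpha_1)}\circ\cdots\circ(K_n^m)^{(\alpha_n)})^{1/m}$, should follow by the standard trick (from \cite{DP05},\cite{P06}) of writing the operator whose $m$-th power dominates the relevant product: using \eqref{t_dobro} or the submultiplicativity built into \eqref{basic2}, one gets $\big(K_1^{(\alpha_1)}\circ\cdots\circ K_n^{(\alpha_n)}\big)^m\le (K_1^m)^{(\alpha_1)}\circ\cdots\circ(K_n^m)^{(\alpha_n)}$ exactly as in the normalized case, and then monotonicity of $r$ under domination of positive operators gives the claim. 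The middle inequality $r((K_1^m)^{(\alpha_1)}\circ\cdots)^{1/m}\le r((K_1^m)^{(\beta_1)}\circ\cdots)^{s_n/m}$ is where the reweighting enters: apply Theorem \ref{severalH} (with $l=1$, $K_{1i}=K_i^m$, so $H_i=K_i^m$ and $\beta=M^{s_n-1}$ replaced appropriately — actually here one wants the cleaner bound $H_1^{(\alpha_1)}\circ\cdots\circ H_n^{(\alpha_n)}\le \big(H_1^{(\beta_1)}\circ\cdots\circ H_n^{(\beta_n)}\big)$ up to the factor $\beta$) — but to get a \emph{clean} factor $s_n/m$ exponent rather than a multiplicative constant, one instead uses \eqref{H}: since $\alpha_i=s_n\beta_i$ and $\sum\beta_i=1$, write $k_i^m(i,j)^{\alpha_i}=\big(k_i^m(i,j)^{\beta_i}\big)^{s_n}$ and apply the pointwise inequality \eqref{H} to the iterates defining the $s_n$-th power of the operator, comparing with the $\beta$-weighted Hadamard product; this is the same computation as in \cite[Theorem 5.1]{P06}. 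The last inequality $r((K_1^m)^{(\beta_1)}\circ\cdots)^{s_n/m}\le r((K_1^{ml})^{(\beta_1)}\circ\cdots)^{s_n/ml}\le r(K_1)^{\alpha_1}\cdots r(K_n)^{\alpha_n}$ is then just Corollary \ref{ref_powers_kl} applied to the normalized exponents $\beta_i$ (which sum to $1$), raised to the power $s_n$, using $\beta_i s_n=\alpha_i$ and $r(K_i)^{\beta_i s_n}=r(K_i)^{\alpha_i}$.

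For \eqref{rad_ref_kl_ref_sn} the argument is the same, only the order of the two operations (taking powers and reweighting) is swapped: first push all the way to the $ml$-th Hadamard powers with the original exponents $\alpha_i$ via the iterated power inequality (first two inequalities, same as the opening step of \eqref{rad_ref_sm} applied twice, with $m$ then $ml$), then reweight at the very end from $\alpha_i$ to $\beta_i$ on the $ml$-th powers, picking up the exponent $s_n/(ml)$, and finally invoke Corollary \ref{ref_powers_kl} for the normalized weights. Both chains thus terminate at the same quantity $r(K_1)^{\alpha_1}\cdots r(K_n)^{\alpha_n}$, and \eqref{rad_ref_sm} versus \eqref{rad_ref_kl_ref_sn} simply record the two natural interleavings.

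The main obstacle I anticipate is the bookkeeping in the reweighting step — making sure that the pointwise inequality \eqref{H} is applied to the right collection of summands so that one genuinely obtains $r(\text{$\alpha$-weighted Hadamard product of $P$-th powers})\le r(\text{$\beta$-weighted Hadamard product of $P$-th powers})^{s_n}$ with a clean exponent and \emph{no} spurious constant $\beta=M^{s_n-1}$. This works precisely because one is comparing a product/iterate expansion termwise, and the substitution $\alpha_i=s_n\beta_i$ with $\sum\beta_i=1$ makes the exponents in \eqref{H} (namely $s_n$ copies, each a product over $i$ of $\beta_i$-weighted factors) add up correctly; the subtlety is that this clean form is available for $r$ (via the Gelfand-type limit over iterates on a sequence space $L\in\mathcal L$, where coordinatewise domination of the matrix iterates is enough) but not obviously for $\|\cdot\|$, $\gamma$ or $w$, which is why the theorem is stated only for $r$. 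I would also need to double-check that all intermediate Hadamard products indeed define operators on $L$ — this is guaranteed by \eqref{c_ineq} and the domination by $\beta_1 K_1^{ml}+\cdots+\beta_n K_n^{ml}$ (or the $\alpha$-analogue, using $s_n\ge1$), exactly as in Theorem \ref{good_work}(ii).
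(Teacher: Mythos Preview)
Your overall architecture matches the paper's: establish the power-refinement $r(K_1^{(\alpha_1)}\circ\cdots\circ K_n^{(\alpha_n)})\le r((K_1^m)^{(\alpha_1)}\circ\cdots)^{1/m}$ for $s_n\ge 1$ via \eqref{basic2} and monotonicity of $r$, iterate to reach the $ml$-level, and then reweight from $\alpha_i$ to $\beta_i$ and invoke Corollary~\ref{ref_powers_kl} for the normalized exponents. The paper organises it just this way, proving first the $s_n\ge 1$ version of \eqref{rad_ref_kl}, then \eqref{rad_ref_sm}, then reading off \eqref{rad_ref_kl_ref_sn}.

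The one place where you make life harder than necessary is the reweighting step. You first reach for Theorem~\ref{severalH}, correctly observe that it introduces an unwanted constant $\beta=M^{s_n-1}$, and then try to rescue the clean exponent via a direct appeal to \eqref{H} applied to ``iterates defining the $s_n$-th power''. That description is muddled (and $s_n$ need not be an integer). The paper's route is far shorter: since $\alpha_i=s_n\beta_i$, one has the \emph{pointwise equality}
\[
(K_1^m)^{(\alpha_1)}\circ\cdots\circ(K_n^m)^{(\alpha_n)}
=\Big((K_1^m)^{(\beta_1)}\circ\cdots\circ(K_n^m)^{(\beta_n)}\Big)^{(s_n)},
\]
and then \eqref{gl3tr} (Theorem~\ref{good_work}(ii), special case of a single operator with $t=s_n\ge 1$) gives directly $r(G^{(s_n)})\le r(G)^{s_n}$ with no constant. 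What you sketch with \eqref{H} is essentially a reproof of \eqref{gl3tr}, so the content is there, but you should recognise and cite \eqref{gl3tr} rather than rebuild it.
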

\begin{proof} First we prove that (\ref{rad_ref_kl}) holds also under our assumptions.
By (\ref{basic2}) we have
\begin{eqnarray}
\nonumber
& &\left(K_1 ^{( \alpha _1)} \circ \cdots \circ K_n ^{(\alpha _n)} \right)^m = \left(K_1 ^{( \alpha _1)} \circ \cdots \circ K_n ^{(\alpha _n)} \right) \cdots \left(K_1 ^{( \alpha _1)} \circ \cdots \circ K_n ^{(\alpha _n)} \right) \\
& \le &  (K_1 ^m ) ^{( \alpha _1)} \circ \cdots \circ (K_n ^m) ^{(\alpha _n)} .
\label{Roman_good}
\end{eqnarray}
It follows from (\ref{Roman_good}) and  (\ref{gl1vecr}) that 
$$r (K_1 ^{( \alpha _1)} \circ \cdots \circ K_n ^{(\alpha _n)} )^m = r \left (\left(K_1 ^{( \alpha _1)} \circ \cdots \circ K_n ^{(\alpha _n)} \right)^m \right)$$
$$ \le  r ((K_1 ^m ) ^{( \alpha _1)} \circ \cdots \circ (K_n ^m) ^{(\alpha _n)} ) \le 
r(K_1 ^m)^{ \alpha _1} \, \cdots r(K_n ^m)^{\alpha _n}=(r(K_1)^{ \alpha _1} \, \cdots r(K_n)^{\alpha _n})^m  $$
which proves (\ref{rad_ref}) in this case. By iterating as before one obtains (\ref{rad_ref_kl}) under our assumptions.

Let us prove (\ref{rad_ref_sm}). 
Since $s_n \ge 1$ and
$\alpha _i = \beta _i s_n$ it follows by the first inequality in  (\ref{rad_ref_kl}) in the case $s_n \ge 1$, (\ref{gl3tr}), (\ref{gl1vecr}) and  (\ref{rad_ref_kl}) that
\begin{eqnarray}
\nonumber
& &r (K_1 ^{( \alpha _1)} \circ \cdots \circ K_n ^{(\alpha _n)} ) \le  r ((K_1 ^m ) ^{( \alpha _1)} \circ \cdots \circ (K_n ^m) ^{(\alpha _n)} ) ^{ \frac{1}{m}}  \\
\nonumber
&= &  r \left( \left( (K_1 ^m ) ^{( \beta _1)} \circ \cdots \circ (K_n ^m) ^{(\beta _n)} \right)^{(s_n)} \right) ^{ \frac{1}{m}}\le
r ((K_1 ^{m }) ^{(\beta _1)} \circ \cdots \circ (K_n ^{m}) ^{(\beta _n)} ) ^{ \frac{s_n}{m}} \\
\nonumber
&\le &
r ((K_1 ^{ml }) ^{( \beta_1 )} \circ \cdots \circ (K_n ^{ml}) ^{(\beta_n )} ) ^{ \frac{s_n}{ml}} 
\le  ( r (K_1 ^{ml } ) ^{ \alpha_1 } \cdots  r(K_n ^{ml }) ^{\alpha_n} ) ^{ \frac{1}{ml}} \\
&=&
r(K_1)^{ \alpha _1} \, \cdots r(K_n)^{\alpha _n} ,
\nonumber
\end{eqnarray}
which 
proves  (\ref{rad_ref_sm}). Now (\ref{rad_ref_kl_ref_sn}) follows from  (\ref{rad_ref_kl}) in the case $s_n \ge 1$ and (\ref{rad_ref_sm}), which completes the proof.
\end{proof}

We conclude the article by extending the main results of \cite{Z18} and some results of \cite{P18a}. The following result is a new variation of \cite[Theorem 4.1]{P18a} for even $m$. By $\sigma _m$ we denote the group of permutations of the set $\{1, \ldots , m\}$.

\begin{theorem} Let $m$ be even, $\{\tau, \nu\} \subset \sigma_m$ and let
$ H_1, \ldots , H_m$ be positive kernel operators on $ L^2(X,\mu)$. For $j=1, \ldots, \frac{m}{2} $ denote $A_j =H_{\tau{(2j-1)}} ^*H_{\tau{(2j)}}$ and $A_{\frac{m}{2} +j} =A_j ^*=H_{\tau{(2j)}} ^*H_{\tau{(2j-1)}}$. Let $P_i = A_{\nu (i)} \cdots A_{\nu (m)} A_{\nu (1)} \cdots A_{\nu (i-1)}$ for $i=1, \ldots , m$. 

\noindent (i) Then 
$$\|H_1^{(\frac{1}{m})}\circ\cdots \circ H_m^{(\frac{1}{m})}\|\le r(A_1^{(\frac{1}{m})}\circ\cdots \circ A_m^{(\frac{1}{m})})^{\frac{1}{2}}$$

\be
\le r\left(P_1^{(\frac{1}{m})}\circ P_2^{(\frac{1}{m})}\circ \cdots \circ P_m^{(\frac{1}{m})}\right)^{\frac{1}{2m}}\le   r\left( A_{\nu (1)} \cdots A_{\nu (m)}\right)^{\frac{1}{2m}}.
\label{KA1}
\ee

\label{KA}

\noindent (ii) If $ H_1, \ldots , H_m$ are nonnegative matrices that define operators on $l^2 (R)$ and if $\alpha \ge \frac{1}{m}$, then

$$\|H_1^{(\alpha)}\circ\cdots \circ H_m ^{(\alpha)}\|\le r(A_1^{(\alpha)}\circ\cdots \circ A_m^{(\alpha)})^{\frac{1}{2}}$$

\be
\le r\left(P_1^{(\alpha)}\circ P_2^{(\alpha)}\circ \cdots \circ P_m^{(\alpha)}\right)^{\frac{1}{2m}}\le   r\left( A_{\nu (1)} \cdots A_{\nu (m)}\right)^{\frac{\alpha}{2}}.
\label{KA2}
\ee
\end{theorem}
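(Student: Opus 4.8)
The plan is to reduce everything to the inequalities already available in the excerpt, chiefly Theorem \ref{good_work}(i) (for part (i), where we work on $L^2(X,\mu)$) and Theorem \ref{good_work}(ii) together with Theorem \ref{severalH} (for part (ii), where $s_n = m\alpha \ge 1$). First I would record the elementary fact that, since $m$ is even and the $A_j$ come in adjoint pairs $A_{\frac{m}{2}+j}=A_j^*$, the Hadamard geometric mean $A_1^{(1/m)}\circ\cdots\circ A_m^{(1/m)}$ is exactly $(B^{(1/m)}\circ\cdots)$ where $B$ collects the factors $H_{\tau(2j-1)}^*H_{\tau(2j)}$ and their adjoints; this is the operator whose square root controls $\|H_1^{(1/m)}\circ\cdots\circ H_m^{(1/m)}\|$. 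The first inequality $\|H_1^{(1/m)}\circ\cdots\circ H_m^{(1/m)}\|\le r(A_1^{(1/m)}\circ\cdots\circ A_m^{(1/m)})^{1/2}$ should come from the standard trick $\|T\|^2=\|T^*T\|=r(T^*T)$ for the operator norm on a Hilbert space applied to $T=H_1^{(1/m)}\circ\cdots\circ H_m^{(1/m)}$, combined with inequality (\ref{basic2})/(\ref{very_good}): one checks that $T^*T \le$ (a Hadamard geometric mean built from the products $H_i^*H_j$), and these products are precisely the $A_j$'s after relabelling via $\tau$. I expect this ``matching up the indices'' bookkeeping to be the first place where care is needed, but it is purely combinatorial.

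Next, for the middle inequality $r(A_1^{(1/m)}\circ\cdots\circ A_m^{(1/m)})\le r(P_1^{(1/m)}\circ\cdots\circ P_m^{(1/m)})^{1/m}$, I would invoke the ``powers'' refinement, Theorem \ref{ref_powers} / Corollary \ref{ref_powers_kl}, or more directly the product inequality (\ref{very_good}) from Theorem \ref{good_work}(i). The key observation is that the cyclic products $P_i = A_{\nu(i)}\cdots A_{\nu(i-1)}$ are cyclic permutations of a single product $A_{\nu(1)}\cdots A_{\nu(m)}$, so $P_1\cdots P_m$ is an $m$-fold product whose spectral radius is $r(A_{\nu(1)}\cdots A_{\nu(m)})^m$ (each $A$ appears $m$ times, arranged so the product is a power up to cyclic equivalence — again using $r(XY)=r(YX)$). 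Applying (\ref{very_good}) with the $l$ groups of $n=m$ operators being exactly the rows of the $P_i$'s gives $r(A_1^{(1/m)}\circ\cdots\circ A_m^{(1/m)})^m = r\big((A_1^{(1/m)}\circ\cdots\circ A_m^{(1/m)})$ written as a product of the factored rows$\big)\le r(P_1^{(1/m)}\circ\cdots\circ P_m^{(1/m)})$, which is the desired middle step after taking $m$-th roots. The final inequality $r(P_1^{(1/m)}\circ\cdots\circ P_m^{(1/m)})\le r(A_{\nu(1)}\cdots A_{\nu(m)})$ is then just (\ref{gl1vecr}): the geometric-mean spectral radius is bounded by the product of the spectral radii $r(P_i)^{1/m}$, and each $r(P_i)=r(A_{\nu(1)}\cdots A_{\nu(m)})$ by cyclicity, so the product is $r(A_{\nu(1)}\cdots A_{\nu(m)})$.

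For part (ii), the strategy is identical but every application of Theorem \ref{good_work}(i) is replaced by Theorem \ref{good_work}(ii) (valid because $L\in\mathcal L$ and the weights $\alpha,\ldots,\alpha$ sum to $s_m=m\alpha\ge 1$), so that $K^{(t)}$ and the Hadamard geometric means define operators on $l^2(R)$ and inequalities (\ref{t_dobro}), (\ref{gl3tr}), (\ref{gl1vecr}), (\ref{very_good}) all apply; the exponent $\frac{1}{2}$ on the right of (\ref{KA1}) becomes $\frac{\alpha}{2}$ because the homogeneity degree of the Hadamard powers is now $m\alpha$ instead of $1$, and similarly one can bring in Theorem \ref{severalH} to get the sharper intermediate bounds with the constant $\beta=M^{s_m-1}$ if a fully quantitative version is wanted (though the statement as written only needs the plain inequalities). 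The main obstacle, as flagged above, will be the careful combinatorial verification that $T^*T$ (resp. its analogue) is dominated by the claimed Hadamard geometric mean of the $A_j$'s after the relabelling through $\tau$, and that the cyclic products $P_i$ multiply up to a genuine power of $A_{\nu(1)}\cdots A_{\nu(m)}$ up to the cyclic-invariance of the spectral radius; once these identifications are made explicit, each inequality in the chain is a direct citation of a numbered result in the excerpt. I would write the proof of (i) in full and then remark that (ii) follows \emph{mutatis mutandis} by replacing Theorem \ref{good_work}(i) with Theorem \ref{good_work}(ii), to avoid repetition.
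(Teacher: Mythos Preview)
Your proposal is correct and follows essentially the same route as the paper: use $\|T\|^2=r(T^*T)$ on $T=H_1^{(1/m)}\circ\cdots\circ H_m^{(1/m)}$, reorder the Hadamard factors in $T^*$ and $T$ separately via commutativity so that the column products in (\ref{very_good}) become exactly the $A_j$'s, and then handle the remaining two inequalities by the cyclic-shift trick and (\ref{gl1vecr}). The paper proves the first inequality in detail and then simply cites \cite[Inequalities~(4.2)]{P18a} for the other two; your self-contained derivation of those---writing $(A_{\nu(1)}^{(1/m)}\circ\cdots\circ A_{\nu(m)}^{(1/m)})^m$ as a product of $m$ cyclically shifted copies and applying (\ref{basic2})/(\ref{very_good}) to land on $P_1^{(1/m)}\circ\cdots\circ P_m^{(1/m)}$, then (\ref{gl1vecr}) together with $r(P_i)=r(P_1)$---is precisely how that cited result is proved, so there is no genuine difference in method.

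One correction: your parenthetical remark that ``$P_1\cdots P_m$ is an $m$-fold product whose spectral radius is $r(A_{\nu(1)}\cdots A_{\nu(m)})^m$'' is false in general (the $P_i$ are cyclic rotations of one another, but their ordinary product is not a power of $P_1$, nor is its spectral radius $r(P_1)^m$). Fortunately you never use this claim; your actual argument for the middle inequality goes through $(A_1^{(1/m)}\circ\cdots\circ A_m^{(1/m)})^m$ and (\ref{very_good}), which is correct. Delete that sentence and the write-up will be clean. For part~(ii) your plan to repeat the argument verbatim with Theorem~\ref{good_work}(ii) in place of (i) is exactly what the paper does.
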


\begin{proof} First we prove (\ref{KA1}).
By 
\be
\|H\|=r(H^* H)^{\frac{1}{2}}=r(H H^*)^{\frac{1}{2}},
\label{L2}
\ee
(\ref{very_good}) and commutativity of Hadamard product we have
$$\|H_1^{(\frac{1}{m})}\circ\cdots \circ H_m^{(\frac{1}{m})}\|=r((H_1^{(\frac{1}{m})}\circ\cdots \circ H_m^{(\frac{1}{m})})^*(H_1^{(\frac{1}{m})}\circ\cdots \circ H_m^{(\frac{1}{m})}))^{\frac{1}{2}}=$$
$$r[((H_{\tau{(1)}}^*)^{(\frac{1}{m})}\circ\cdots \circ(H_{\tau{(m-1)}}^*)^{(\frac{1}{m})}\circ(H_{\tau{(2)}}^*)^{(\frac{1}{m})}\circ\cdots \circ(H_{\tau{(m)}}^*)^{(\frac{1}{m})}))\cdot $$
$$(H_{\tau{(2)}}^{(\frac{1}{m})}\circ\cdots \circ H_{\tau{(m)}}^{(\frac{1}{m})}\circ H_{\tau{(1)}}^{(\frac{1}{m})}\circ\cdots \circ H_{\tau{(m-1)}}^{(\frac{1}{m})}))]^{\frac{1}{2}}$$
$$\le r((H_{\tau{(1)}}^*H_{\tau{(2)}})^{(\frac{1}{m})}\circ\cdots \circ(H_{\tau{(m-1)}}^*H_{\tau{(m)}})^{(\frac{1}{m})} \circ(H_{\tau{(2)}}^*H_{\tau{(1)}})^{(\frac{1}{m})}\circ\cdots \circ(H_{\tau{(m)}}^*H_{\tau{(m-1)}})^{(\frac{1}{m})}))^{\frac{1}{2}}$$
$$= r(A_1^{(\frac{1}{m})}\circ\cdots \circ A_m^{(\frac{1}{m})})^{\frac{1}{2}} =r(A_{\nu (1)}^{(\frac{1}{m})}\circ\cdots \circ A_{\nu (m)}^{(\frac{1}{m})})^{\frac{1}{2}},$$
which proves the first inequality in (\ref{KA}). The second and the third inequality in (\ref{KA1}) follow from   \cite[Inequalities (4.2)]{P18a}.

Inequalities (\ref{KA2}) are proved in a similar manner by applying Theorem \ref{good_work}(ii).

\end{proof}

By interchanging $H_i$ with $H_i ^*$ for all $i$ in Theorem \ref{KA} we obtain the following result.
\begin{corollary}
Let $m$ be even, $\tau \in \sigma_m$, $\beta \in [0,1]$ and let
$ H_1, \ldots , H_m$ be positive kernel operators on $ L^2(X,\mu)$. Let $A_j$ for $j=1, \ldots, m $ be as in Theorem \ref{KA} and denote  $B_j =H_{\tau{(2j-1)}} H^* _{\tau{(2j)}}$ and $B_{\frac{m}{2} +j} =B_j ^*=H_{\tau{(2j)}} H^* _{\tau{(2j-1)}}$ for $j=1, \ldots, \frac{m}{2} $. 

\noindent (i) Then 
$$\|H_1^{(\frac{1}{m})}\circ\cdots \circ H_m^{(\frac{1}{m})}\|\le r(B_1^{(\frac{1}{m})}\circ\cdots \circ B_m^{(\frac{1}{m})})^{\frac{1}{2}}$$
and 
$$\|H_1^{(\frac{1}{m})}\circ\cdots \circ H_m^{(\frac{1}{m})}\|\le  r(A_1^{(\frac{1}{m})}\circ\cdots \circ A_m^{(\frac{1}{m})})^{\frac{\beta}{2}}r(B_1^{(\frac{1}{m})}\circ\cdots \circ B_m^{(\frac{1}{m})})^{\frac{1-\beta}{2}}.$$

\noindent (ii) If $ H_1, \ldots , H_m$ are nonnegative matrices that define operators on $l^2 (R)$ and if $\alpha \ge \frac{1}{m}$, then
$$\|H_1 ^{(\alpha)}\circ\cdots \circ H_m^{(\alpha)}\|\le r(B_1 ^{(\alpha)}\circ\cdots \circ B_m ^{(\alpha)})^{\frac{1}{2}}$$
and 
$$\|H_1^{(\alpha)} \circ\cdots \circ H_m^{(\alpha)}\|\le  r(A_1^{(\alpha)}\circ\cdots \circ A_m^{(\alpha)})^{\frac{\beta}{2}}r(B_1 ^{(\alpha)}\circ\cdots \circ B_m ^{(\alpha)})^{\frac{1-\beta}{2}}.$$
\label{goodAB}
\end{corollary}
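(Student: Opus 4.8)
The plan is to apply Theorem \ref{KA} as it stands, but to the adjoint operators $H_1^*,\ldots,H_m^*$ in place of $H_1,\ldots,H_m$, and then to track how the relevant quantities transform. First I would record the two structural facts that make the substitution legitimate. In the setting of (i), if $H$ is a positive kernel operator on $L^2(X,\mu)$ with kernel $k(x,y)$, then $H^*$ is again a positive kernel operator, with kernel $k(y,x)$; in the setting of (ii), if a nonnegative matrix $H$ defines a bounded operator on $l^2(R)$, then its transpose $H^*$ does too, being the Hilbert-space adjoint. Next, since the kernel of $(H_1^*)^{(1/m)}\circ\cdots\circ(H_m^*)^{(1/m)}$ at $(x,y)$ equals $\prod_i k_i(y,x)^{1/m}$, we have $(H_1^*)^{(1/m)}\circ\cdots\circ(H_m^*)^{(1/m)}=\big(H_1^{(1/m)}\circ\cdots\circ H_m^{(1/m)}\big)^*$, and therefore $\|(H_1^*)^{(1/m)}\circ\cdots\circ(H_m^*)^{(1/m)}\|=\|H_1^{(1/m)}\circ\cdots\circ H_m^{(1/m)}\|$, since $\|T^*\|=\|T\|$ on a Hilbert space. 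The same identity holds with the exponent $\alpha$ in place of $1/m$, which is the version needed for (ii).

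Then I would carry out the substitution. Writing $\widetilde H_i:=H_i^*$ and forming the operators $\widetilde A_j$ as in Theorem \ref{KA} out of the $\widetilde H_i$, one checks directly that $\widetilde A_j=\widetilde H_{\tau(2j-1)}^*\,\widetilde H_{\tau(2j)}=H_{\tau(2j-1)}H_{\tau(2j)}^*=B_j$ for $j=1,\ldots,\frac{m}{2}$, and correspondingly $\widetilde A_{\frac{m}{2}+j}=\widetilde A_j^*=B_j^*=B_{\frac{m}{2}+j}$; that is, the family $\{\widetilde A_j\}$ is exactly $\{B_j\}$. Applying the first inequality of Theorem \ref{KA}(i) to $\widetilde H_1,\ldots,\widetilde H_m$ and using the norm identity above then yields $\|H_1^{(1/m)}\circ\cdots\circ H_m^{(1/m)}\|\le r(B_1^{(1/m)}\circ\cdots\circ B_m^{(1/m)})^{1/2}$, which is the first assertion of (i). For the second assertion I would combine this with the first inequality of Theorem \ref{KA}(i) applied directly to $H_1,\ldots,H_m$, namely $\|H_1^{(1/m)}\circ\cdots\circ H_m^{(1/m)}\|\le r(A_1^{(1/m)}\circ\cdots\circ A_m^{(1/m)})^{1/2}$: since a nonnegative number bounded by both $c^{1/2}$ and $d^{1/2}$ is bounded by $c^{\beta/2}d^{(1-\beta)/2}$ for every $\beta\in[0,1]$ (raise the two bounds to the powers $\beta$ and $1-\beta$ and multiply), the claimed inequality follows at once. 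Part (ii) is obtained in exactly the same way, invoking Theorem \ref{KA}(ii) under the hypothesis $\alpha\ge\frac{1}{m}$ together with the corresponding norm identity for Hadamard $\alpha$-powers.

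There is essentially no genuine obstacle here, the statement being a corollary whose proof is bookkeeping. The only points that require a moment of care are the verification that taking adjoints (respectively transposes) preserves the class of operators under consideration and sends the Hadamard geometric mean to its adjoint (so that its norm is unchanged), and the identification $\widetilde A_j=B_j$; once these are in place, everything reduces to the already established Theorem \ref{KA} and the elementary weighted geometric mean trick.
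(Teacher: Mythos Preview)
Your proposal is correct and follows essentially the same route as the paper, which simply says the corollary is obtained ``by interchanging $H_i$ with $H_i^*$ for all $i$ in Theorem \ref{KA}.'' You have filled in the details of this substitution carefully (the adjoint identity for Hadamard means, the identification $\widetilde A_j=B_j$, and the weighted-geometric-mean combination for the second inequality), all of which are accurate.
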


\begin{remark}{\rm In the special case of the identity permutation $\mu$ in Theorem \ref{KA} it holds
$$ r\left( A_{\mu (1)} \cdots A_{\mu (m)}\right)^{\frac{1}{2}}=\| H_{\tau{(1)}}^*H_{\tau{(2)}}H_{\tau{(3)}}^*H_{\tau{(4)}}\cdots H_{\tau{(m-1)}}^*H_{\tau{(m)}}\|$$
by (\ref{L2}).
}
\end{remark}


The following two results extend, generalize and refine \cite[Theorem 2.8]{Z18} and give an extension and a different refinement of \cite[Inequality (4.16)]{P18a} in the case $\alpha\ge\frac{2}{m}$. 
\begin{theorem}
 Let $m$ be even, $\alpha\ge\frac{2}{m}$, $\tau \in \sigma_m$ and
let $H_1,\ldots,H_m$ be nonnegative matrices that define operators on $l^2(R)$. Let $A_j$ for
$j=1, \ldots, m $ be as in Theorem \ref{KA} and denote $S_i =  A_{i} \cdots A_{\frac{m}{2}} A_{1} \cdots A_{i-1}$ for $i=1, \ldots , \frac{m}{2}$.  Then
$$\|H_1^{(\alpha)}\circ\cdots \circ H_m ^{(\alpha)}\|\le r(A_1^{(\alpha)}\circ\cdots \circ A_m^{(\alpha)})^{\frac{1}{2}} \le  r(A_1^{(\alpha)}\circ\cdots \circ A_{\frac{m}{2}}^{(\alpha)})$$
$$= r((H_{\tau{(1)}}^{*}H_{\tau{(2)}})^{(\alpha)}\circ( H_{\tau{(3)}}^{*}H_{\tau{(4)}})^{(\alpha)}\circ \cdots \circ(H_{\tau{(m-1)}}^{*}H_{\tau{(m)}})^{(\alpha)})$$
\be
\label{KA3}
\le r\left(S_1^{(\alpha)}\circ S_2^{(\alpha)}\circ \cdots \circ S_{\frac{m}{2}}^{(\alpha)}\right)^{\frac{2}{m}}\le r(H_{\tau{(1)}}^{*}H_{\tau{(2)}}H_{\tau{(3)}}^{*}H_{\tau{(4)}}\cdots H_{\tau{(m-1)}}^{*}H_{\tau{(m)}})^{\alpha}.
\ee
\end{theorem}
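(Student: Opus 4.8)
The plan is to prove the displayed chain (\ref{KA3}) link by link, from left to right, using Theorem \ref{KA}(ii), the structural identity $A_{\frac m2+j}=A_{j}^{*}$, commutativity of the Hadamard product, and cyclic invariance of the spectral radius. Throughout, $L=l^{2}(R)\in\mathcal{L}$ and each $A_{j}=H_{\tau(2j-1)}^{*}H_{\tau(2j)}$ is a nonnegative matrix defining a bounded operator on $L$ (the adjoint on $l^{2}(R)$ of a bounded operator with nonnegative matrix again has nonnegative matrix, and products of such operators behave likewise).

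The first inequality, $\|H_{1}^{(\alpha)}\circ\cdots\circ H_{m}^{(\alpha)}\|\le r(A_{1}^{(\alpha)}\circ\cdots\circ A_{m}^{(\alpha)})^{1/2}$, is precisely the first inequality of Theorem \ref{KA}(ii), available here since $\alpha\ge\frac2m\ge\frac1m$. For the second one, put $B:=A_{1}^{(\alpha)}\circ\cdots\circ A_{\frac m2}^{(\alpha)}$; since $A_{\frac m2+j}=A_{j}^{*}$ and the Hadamard power commutes with transposition, comparing kernels at $(x,y)$ (the kernel of the left-hand side being $\prod_{j=1}^{m}a_{j}(x,y)^{\alpha}=\big(\prod_{j=1}^{m/2}a_{j}(x,y)^{\alpha}\big)\big(\prod_{j=1}^{m/2}a_{j}(y,x)^{\alpha}\big)$) shows $A_{1}^{(\alpha)}\circ\cdots\circ A_{m}^{(\alpha)}=B\circ B^{*}=S_{1,1}(B)$. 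Since $B$ defines an operator on $L$ (a Hadamard weighted geometric power whose weights sum to $\frac m2\alpha\ge1$, cf. Theorem \ref{good_work}(ii)), inequality (\ref{Schwenk_rho_s}) with $\alpha=\beta=1$ gives $r(B\circ B^{*})\le r(B)^{2}$, i.e. $r(A_{1}^{(\alpha)}\circ\cdots\circ A_{m}^{(\alpha)})^{1/2}\le r(B)$, which is the asserted bound; the ensuing equality is merely the definition of the $A_{j}$.

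For the fourth inequality I would exploit commutativity of the Hadamard product to write, for each $p=1,\ldots,\frac m2$, the factor $A_{1}^{(\alpha)}\circ\cdots\circ A_{\frac m2}^{(\alpha)}$ as $A_{p}^{(\alpha)}\circ A_{p+1}^{(\alpha)}\circ\cdots\circ A_{p-1}^{(\alpha)}$ with indices read cyclically mod $\frac m2$, so that $\big(A_{1}^{(\alpha)}\circ\cdots\circ A_{\frac m2}^{(\alpha)}\big)^{m/2}=\prod_{p=1}^{m/2}\big(K_{p1}^{(\alpha)}\circ\cdots\circ K_{p,m/2}^{(\alpha)}\big)$ with $K_{pq}=A_{p+q-1}$ (indices mod $\frac m2$). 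Inequality (\ref{basic2}), valid in the range $s_{n}=\frac m2\alpha\ge1$ over $L\in\mathcal{L}$ exactly as in the proof of Theorem \ref{ugly_ref} (being the iterated form of (\ref{H})), then gives $\big(A_{1}^{(\alpha)}\circ\cdots\circ A_{\frac m2}^{(\alpha)}\big)^{m/2}\le S_{1}^{(\alpha)}\circ\cdots\circ S_{\frac m2}^{(\alpha)}$, because the $q$-th Hadamard column produces the cyclic product $A_{q}A_{q+1}\cdots A_{\frac m2}A_{1}\cdots A_{q-1}=S_{q}$. Applying $r$, using its monotonicity on positive operators together with $r(X^{m/2})=r(X)^{m/2}$, and taking a $\frac2m$-th root yields the fourth inequality. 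Finally, each $S_{i}$ is a cyclic rearrangement of $A_{1}\cdots A_{\frac m2}$, so $r(S_{i})=r(A_{1}\cdots A_{\frac m2})$; inequality (\ref{gl1vecr}) for $\rho=r$ in the regime $s_{n}\ge1$ (Theorem \ref{good_work}(ii)) gives $r\big(S_{1}^{(\alpha)}\circ\cdots\circ S_{\frac m2}^{(\alpha)}\big)\le\prod_{i=1}^{m/2}r(S_{i})^{\alpha}=r(A_{1}\cdots A_{\frac m2})^{m\alpha/2}$, and a $\frac2m$-th root together with $A_{1}\cdots A_{\frac m2}=H_{\tau(1)}^{*}H_{\tau(2)}\cdots H_{\tau(m-1)}^{*}H_{\tau(m)}$ closes the chain.

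The routine but load-bearing part is the bookkeeping that every Hadamard power, Hadamard product, and matrix power occurring above (notably $A_{1}^{(\alpha)}\circ\cdots\circ A_{m}^{(\alpha)}$, $B$, and $S_{1}^{(\alpha)}\circ\cdots\circ S_{\frac m2}^{(\alpha)}$) genuinely defines a bounded operator on $l^{2}(R)$; this is exactly where $\alpha\ge\frac2m$ enters, forcing every relevant weight-sum to be $\ge1$, and together with (\ref{H}) it also legitimises the $s_{n}\ge1$ forms of (\ref{basic2}) and (\ref{gl1vecr}) supplied by Theorem \ref{good_work}(ii). The permutation $\tau$ is inert, serving only to relabel $H_{1},\ldots,H_{m}$.
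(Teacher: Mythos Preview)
Your proof is correct and follows essentially the same route as the paper's. The only cosmetic differences are that for the second inequality you package $r(B\circ B^{*})\le r(B)^{2}$ as the special case $\alpha=\beta=1$ of (\ref{Schwenk_rho_s}), whereas the paper applies (\ref{gl1vecr}) from Theorem~\ref{good_work}(ii) directly; and for the fourth inequality you invoke (\ref{basic2}) plus monotonicity of $r$, whereas the paper cites (\ref{very_good}) in one stroke --- both amount to the same cyclic-rearrangement trick.
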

\begin{proof}
By the first inequality in (\ref{KA2}) and (\ref{gl1vecr}) in Theorem \ref{good_work}(ii) we have 
$$\|H_1^{(\alpha)}\circ\cdots \circ H_m ^{(\alpha)}\|\le r(A_1^{(\alpha)}\circ\cdots \circ A_m^{(\alpha)})^{\frac{1}{2}}$$
$$= r(A_1^{(\alpha)}\circ\cdots \circ A_{\frac{m}{2} }^{(\alpha)} \circ (A_1 ^*)^{(\alpha)}\circ\cdots \circ (A^*_{\frac{m}{2} } )^{(\alpha)})^{\frac{1}{2}}$$
$$\le (r(A_1^{(\alpha)}\circ\cdots \circ A_{\frac{m}{2} }^{(\alpha)})
 r ((A_1 ^{(\alpha)}\circ\cdots \circ A_{\frac{m}{2}} ^{(\alpha)})^*))^{\frac{1}{2}}= r(A_1^{(\alpha)}\circ\cdots \circ A_{\frac{m}{2}}^{(\alpha)})$$
$$= r((H_{\tau{(1)}}^{*}H_{\tau{(2)}})^{(\alpha)}\circ( H_{\tau{(3)}}^{*}H_{\tau{(4)}})^{(\alpha)}\circ \cdots \circ(H_{\tau{(m-1)}}^{*}H_{\tau{(m)}})^{(\alpha)}).$$

Since
$$((H_{\tau{(1)}}^{*}H_{\tau{(2)}})^{(\alpha)}\circ(H_{\tau{(3)}}^{*}H_{\tau{(4)}}) ^{(\alpha)}\circ\cdots \circ(H_{\tau{(m-1)}}^{*} H_{\tau{(m)}})^{(\alpha)})^{\frac{m}{2}}=$$
$$((H_{\tau{(1)}}^{*}H_{\tau{(2)}})^{(\alpha)}\circ\cdots \circ(H_{\tau{(m-1)}}^{*}H_{\tau{(m)}})^{(\alpha)})((H_{\tau{(3)}}^{*}H_{\tau{(4)}}) ^{(\alpha)}\circ\cdots\circ(H_{\tau{(1)}}^{*}H_{\tau{(2)}})^{(\alpha)})$$
$$\cdots((H_{\tau{(m-1)}}^{*} H_{\tau{(m)}})^{(\alpha)}\circ\cdots\circ(H_{\tau{(m-3)}}^* H_{\tau{(m-2)}})^{(\alpha)}),$$
we obtain by (\ref{very_good})  that
$$ r((H_{\tau{(1)}}^{*}H_{\tau{(2)}})^{(\alpha)}\circ(H_{\tau{(3)}}^{*}H_{\tau{(4)}})^{(\alpha)}\circ\cdots \circ(H_{\tau{(m-1)}}^{*}H_{\tau{(m)}})^{(\alpha)})\le$$
$$r(S_1^{(\alpha)}\circ S_2^{(\alpha)}\circ\cdots\circ S_{\frac{m}{2}}^{(\alpha)})^{\frac{2}{m}}\le (r(S_1)^{\alpha}\cdots
r(S_{\frac{m}{2}})^{\alpha})^{\frac{2}{m}}$$
$$= r(H_{\tau{(1)}}^{*}H_{\tau{(2)}}H_{\tau{(3)}}^{*}H_{\tau{(4)}}\cdots H_{\tau{(m-1)}}^{*}H_{\tau{(m)}})^{\alpha},$$
where the last equality follows from $r(S_1)=\cdots=r(S_{\frac{m}{2}}).$
\end{proof}

\begin{corollary}
Let $m$ be even, $\alpha\ge\frac{2}{m}$,  $\tau \in \sigma_m$, $\beta \in [0,1]$ and
let $H_1,\ldots,H_m$ be nonnegative matrices that define operators on $l^2(R)$. Let $A_j$ and $B_j$ for
$j=1, \ldots, m $ be as in Corollary \ref{goodAB} and denote $S_i =  A_{i} \cdots A_{\frac{m}{2}} A_{1} \cdots A_{i-1}$ and $T_i =  B_{i} \cdots B_{\frac{m}{2}} B_{1} \cdots B_{i-1}$ for $i=1, \ldots , \frac{m}{2}$.  Then
\be
\label{KA4}
\|H_1^{(\alpha)} \circ\cdots \circ H_m^{(\alpha)}\|\le  r(A_1^{(\alpha)}\circ\cdots \circ A_m^{(\alpha)})^{\frac{\beta}{2}}r(B_1 ^{(\alpha)}\circ\cdots \circ B_m ^{(\alpha)})^{\frac{1-\beta}{2}}
\ee
$$\le r((H_{\tau{(1)}}^{*}H_{\tau{(2)}})^{(\alpha)}\circ( H_{\tau{(3)}}^{*}H_{\tau{(4)}})^{(\alpha)}\circ \cdots \circ(H_{\tau{(m-1)}}^{*}H_{\tau{(m)}})^{(\alpha)})^{\beta}\cdot$$
$$ r((H_{\tau{(1)}}H^{*} _{\tau{(2)}})^{(\alpha)}\circ( H_{\tau{(3)}}H^{*} _{\tau{(4)}})^{(\alpha)}\circ \cdots \circ(H_{\tau{(m-1)}}H^{*} _{\tau{(m)}})^{(\alpha)})^{1-\beta}$$
$$\le r\left(S_1^{(\alpha)}\circ S_2^{(\alpha)}\circ \cdots \circ S_{\frac{m}{2}}^{(\alpha)}\right)^{\frac{2\beta}{m}} r\left(T_1^{(\alpha)}\circ T_2^{(\alpha)}\circ \cdots \circ T_{\frac{m}{2}}^{(\alpha)}\right)^{\frac{2(1-\beta)}{m}}$$
$$
\le r(H_{\tau{(1)}}^{*}H_{\tau{(2)}}H_{\tau{(3)}}^{*}H_{\tau{(4)}}\cdots H_{\tau{(m-1)}}^{*}H_{\tau{(m)}})^{\alpha \beta} r(H_{\tau{(1)}}H^{*} _{\tau{(2)}}H_{\tau{(3)}}H^{*} _{\tau{(4)}}\cdots H_{\tau{(m-1)}}H^{*} _{\tau{(m)}})^{\alpha (1-\beta)}.
$$
\end{corollary}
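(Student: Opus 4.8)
The plan is to deduce this Corollary from the preceding Theorem by the standard device of ``interchanging $H_i$ with $H_i^*$'' together with a Hadamard geometric-mean splitting of the exponent. First I would record that, by definition, $B_j = H_{\tau(2j-1)}H^*_{\tau(2j)}$ and $B_{m/2+j}=B_j^*$ are exactly the operators obtained from the $A_j$'s of the Theorem upon replacing every $H_i$ by $H_i^*$; since $(H_i^*)^* = H_i$, applying the Theorem verbatim to the family $H_1^*,\ldots,H_m^*$ (which are again nonnegative matrices defining operators on $l^2(R)$, and $\|H_1^{(\alpha)}\circ\cdots\circ H_m^{(\alpha)}\| = \|(H_1^*)^{(\alpha)}\circ\cdots\circ(H_m^*)^{(\alpha)}\|$ because the Hadamard product norm is invariant under taking adjoints of all factors) yields the chain
\[
\|H_1^{(\alpha)}\circ\cdots\circ H_m^{(\alpha)}\| \le r(B_1^{(\alpha)}\circ\cdots\circ B_m^{(\alpha)})^{\frac12} \le r((H_{\tau(1)}H^*_{\tau(2)})^{(\alpha)}\circ\cdots)^{\,} \le r(T_1^{(\alpha)}\circ\cdots\circ T_{m/2}^{(\alpha)})^{\frac2m} \le r(H_{\tau(1)}H^*_{\tau(2)}\cdots H_{\tau(m-1)}H^*_{\tau(m)})^{\alpha}.
\]
This is the ``$B$/$T$'' analogue of \eqref{KA3}, and the ``$A$/$S$'' chain \eqref{KA3} itself is the Theorem as stated.

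Next I would bring in the parameter $\beta\in[0,1]$. The key tool is the submultiplicativity of the spectral radius across a Hadamard geometric mean, i.e.\ inequality \eqref{gl1vecr} for $\rho=r$ in the two-factor form $r(P^{(\beta)}\circ Q^{(1-\beta)}) \le r(P)^{\beta} r(Q)^{1-\beta}$, applied with $P = A_1^{(\alpha)}\circ\cdots\circ A_m^{(\alpha)}$ and $Q = B_1^{(\alpha)}\circ\cdots\circ B_m^{(\alpha)}$. The first line of \eqref{KA4} is then precisely the second displayed inequality of Corollary \ref{goodAB}(ii), so nothing new is needed there. For the remaining three inequalities of \eqref{KA4}, I would run the $A$-chain and the $B$-chain in parallel: raising each inequality in the $A$-chain to the power $\beta$ and each inequality in the $B$-chain to the power $1-\beta$ and multiplying the two, monotonicity of $t\mapsto t^\beta$ and $t\mapsto t^{1-\beta}$ on $[0,\infty)$ turns the two separate chains into the single claimed four-term chain, with the common refining middle terms appearing as the products $r(\cdots A\cdots)^\beta r(\cdots B\cdots)^{1-\beta}$, then $r(S_\bullet)^{2\beta/m} r(T_\bullet)^{2(1-\beta)/m}$, then the final product of the two ``alternating'' ordinary-product spectral radii raised to $\alpha\beta$ and $\alpha(1-\beta)$ respectively.

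So the whole argument is: (1) apply the Theorem once to $\{H_i\}$ to get the $A$/$S$ chain; (2) apply it again to $\{H_i^*\}$ to get the $B$/$T$ chain, using adjoint-invariance of the Hadamard-product norm; (3) glue the two chains with the two-factor case of \eqref{gl1vecr} and with the elementary monotonicity of real powers. I do not expect a serious obstacle; the only point requiring a little care is step (2)'s bookkeeping — one must check that replacing $H_i$ by $H_i^*$ sends $A_j = H_{\tau(2j-1)}^*H_{\tau(2j)}$ to $(H_{\tau(2j-1)}^*)^*(H_{\tau(2j)}^*) = H_{\tau(2j-1)}H^*_{\tau(2j)} = B_j$ (and correspondingly $S_i\mapsto T_i$, and the alternating product $H^*_{\tau(1)}H_{\tau(2)}\cdots\mapsto H_{\tau(1)}H^*_{\tau(2)}\cdots$), and that the hypotheses of the Theorem — $m$ even, $\alpha\ge\frac2m$, nonnegative matrices defining operators on $l^2(R)$ — are preserved under taking adjoints. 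Once that identification is made the result is immediate, so the proof can simply say: ``By interchanging $H_i$ with $H_i^*$ for all $i$ in the previous theorem and combining the resulting inequalities with \eqref{gl1vecr} applied to $\rho=r$ we obtain \eqref{KA4}.''
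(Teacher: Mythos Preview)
Your proposal is correct and matches the paper's intended argument: the paper states this result as a Corollary without proof, leaving it to follow from the preceding Theorem (inequalities \eqref{KA3}) applied once to $\{H_i\}$ and once to $\{H_i^*\}$, together with Corollary \ref{goodAB}(ii) for the first inequality, and then multiplying the $\beta$- and $(1-\beta)$-powers of the two chains --- exactly as you describe. One small remark: your invocation of \eqref{gl1vecr} with $P=A_1^{(\alpha)}\circ\cdots\circ A_m^{(\alpha)}$ and $Q=B_1^{(\alpha)}\circ\cdots\circ B_m^{(\alpha)}$ is not actually needed, since the first line of \eqref{KA4} already comes verbatim from Corollary \ref{goodAB}(ii), and the remaining lines use only the elementary fact that $a\le b$ and $a\le c$ imply $a\le b^\beta c^{1-\beta}$ (monotonicity of powers), not a Hadamard-mean inequality.
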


The following result extends \cite[Theorem 2.13]{Z18} and \cite[Theorem 4.1]{P18a}.

\begin{theorem}
Let $H_1,\ldots,H_m$ be positive kernel operators on $L^{2}(X,\mu)$ and $\{\tau,\nu \} \subset \sigma_m$. Denote
$Q_j=H_{\tau{(j)}}^*H_{\nu{(j)}}\cdots H_{\tau{(m)}}^*H_{\nu{(m)}}H_{\tau{(1)}}^*H_{\nu{(1)}}\cdots H_{\tau{(j-1)}}^*H_{\nu{(j-1)}}$ for $j=1,\ldots,m$. 

\noindent (i) Then
$$\|H_1^{(\frac{1}{m})}\circ\cdots \circ H_m^{(\frac{1}{m})}\|\le r((H_{\tau{(1)}}^*H_{\nu{(1)}})^{(\frac{1}{m})}\circ\cdots \circ(H_{\tau{(m)}}^*H_{\nu{(m)}})^{(\frac{1}{m})})^{\frac{1}{2}}$$
\be
\le r(Q_1^{(\frac{1}{m})}\circ\cdots \circ Q_m^{(\frac{1}{m})})^{\frac{1}{2m}}\le r(H_{\tau{(1)}}^*H_{\nu{(1)}}\cdots H_{\tau{(m)}}^*H_{\nu{(m)}})^{\frac{1}{2m}}.
\label{KAperm}
\ee

\noindent (ii) If $ H_1, \ldots , H_m$ are nonnegative matrices that define operators on $l^2 (R)$ and if $\alpha \ge \frac{1}{m}$, then
$$\|H_1 ^{(\alpha)}\circ\cdots \circ H_m ^{(\alpha)}\|\le r((H_{\tau{(1)}}^*H_{\nu{(1)}})^{(\alpha)} \circ\cdots \circ(H_{\tau{(m)}}^*H_{\nu{(m)}})^{(\alpha)})^{\frac{1}{2}}$$
\be
\le r(Q_1 ^{(\alpha)}\circ\cdots \circ Q_m ^{(\alpha)})^{\frac{1}{2m}}\le r(H_{\tau{(1)}}^*H_{\nu{(1)}}\cdots H_{\tau{(m)}}^*H_{\nu{(m)}})^{\frac{\alpha}{2}}.
\label{KAperm2}
\ee
\label{12}
\end{theorem}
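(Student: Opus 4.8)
The plan is to imitate closely the proof of Theorem \ref{KA}, replacing the operators $A_j$ by the products $H_{\tau(j)}^* H_{\nu(j)}$ and using the same three ingredients: the identity (\ref{L2}) relating $\|\cdot\|$ to $r$, the submultiplicativity inequality (\ref{very_good}) for the Hadamard weighted geometric mean under products, and the commutativity of the Hadamard product. First I would write, using (\ref{L2}) with $H=H_1^{(1/m)}\circ\cdots\circ H_m^{(1/m)}$,
\be
\nonumber
\|H_1^{(\frac 1m)}\circ\cdots\circ H_m^{(\frac 1m)}\|
= r\bigl((H_1^{(\frac 1m)}\circ\cdots\circ H_m^{(\frac 1m)})^*(H_1^{(\frac 1m)}\circ\cdots\circ H_m^{(\frac 1m)})\bigr)^{\frac 12}.
\ee
Since the adjoint of a Hadamard power is the Hadamard power of the adjoint and the Hadamard product is commutative, the operator inside $r(\cdot)$ equals the product of $\bigl((H_{\tau(1)}^*)^{(\frac 1m)}\circ\cdots\circ(H_{\tau(m)}^*)^{(\frac 1m)}\bigr)$ with $\bigl(H_{\nu(1)}^{(\frac 1m)}\circ\cdots\circ H_{\nu(m)}^{(\frac 1m)}\bigr)$ (here the ordering of the factors inside each Hadamard product is irrelevant, so I match index $j$ of the first factor with index $j$ of the second). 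Applying (\ref{very_good}) with $l=2$ to this product gives the bound
$r\bigl((H_{\tau(1)}^*H_{\nu(1)})^{(\frac 1m)}\circ\cdots\circ(H_{\tau(m)}^*H_{\nu(m)})^{(\frac 1m)}\bigr)^{\frac 12}$, which is the first inequality in (\ref{KAperm}).

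For the second and third inequalities in (\ref{KAperm}) I would raise the middle kernel operator $G:=(H_{\tau(1)}^*H_{\nu(1)})^{(\frac 1m)}\circ\cdots\circ(H_{\tau(m)}^*H_{\nu(m)})^{(\frac 1m)}$ to the $m$-th power and use (\ref{basic2})/(\ref{very_good}) once more. Writing out $G^m$ as a product of $m$ cyclically shifted copies of $G$, inequality (\ref{basic2}) shows that $G^m$ is dominated (entrywise) by $Q_1^{(1/m)}\circ\cdots\circ Q_m^{(1/m)}$, where $Q_j$ is exactly the cyclically shifted product $H_{\tau(j)}^*H_{\nu(j)}\cdots H_{\tau(m)}^*H_{\nu(m)}H_{\tau(1)}^*H_{\nu(1)}\cdots H_{\tau(j-1)}^*H_{\nu(j-1)}$ defined in the statement. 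Hence $r(G)^m=r(G^m)\le r(Q_1^{(1/m)}\circ\cdots\circ Q_m^{(1/m)})$, which yields the second inequality after taking $\frac{1}{2m}$-th powers; the third inequality then follows from (\ref{gl1vecr}) applied to $Q_1^{(1/m)}\circ\cdots\circ Q_m^{(1/m)}$ together with the fact that the $Q_j$ are cyclic permutations of one another so that $r(Q_1)=\cdots=r(Q_m)=r(H_{\tau(1)}^*H_{\nu(1)}\cdots H_{\tau(m)}^*H_{\nu(m)})$.

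For part (ii) the argument is identical, except that one invokes Theorem \ref{good_work}(ii) in place of the Banach-function-space statements: for $L\in\mathcal L$ and $\alpha\ge\frac 1m$ we have $s_m=m\alpha\ge 1$, so $H_1^{(\alpha)}\circ\cdots\circ H_m^{(\alpha)}$, the operators $(H_{\tau(j)}^*H_{\nu(j)})^{(\alpha)}$, and the relevant Hadamard geometric means all define bounded operators on $l^2(R)$, and inequalities (\ref{gl1vecr}), (\ref{basic2}), (\ref{very_good}) hold for $\rho=r$ and $\rho=\|\cdot\|$. The only place requiring a little care is the exponent bookkeeping: in (\ref{KAperm2}) the final exponent is $\frac{\alpha}{2}$ rather than $\frac{1}{2m}$, which comes out correctly because raising $G^m$ through (\ref{very_good}) produces $r(Q_j)^{\alpha}$ (not $r(Q_j)^{1/m}$), and $(r(Q_1)^{\alpha}\cdots r(Q_m)^{\alpha})^{1/(2m)}\cdot 1 = r(H_{\tau(1)}^*H_{\nu(1)}\cdots H_{\tau(m)}^*H_{\nu(m)})^{\alpha/2}$. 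I expect the main obstacle to be purely notational: correctly tracking the cyclic shifts so that the product expansion of $G^m$ matches the stated $Q_j$ exactly, and making sure the commutativity-of-Hadamard-product step legitimately lines up the $\tau$-indexed adjoint factors with the $\nu$-indexed factors; there is no conceptual difficulty beyond what is already handled in Theorem \ref{KA} and \cite[Theorem 4.1]{P18a}.
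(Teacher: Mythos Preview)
Your proposal is correct and follows essentially the same approach as the paper's own proof: use (\ref{L2}) and commutativity of the Hadamard product to rewrite $\|H_1^{(1/m)}\circ\cdots\circ H_m^{(1/m)}\|$ as $r$ of a product of two Hadamard means indexed by $\tau$ and $\nu$, apply (\ref{very_good}) to get the first inequality, then expand $G^m$ as a product of cyclic shifts and apply (\ref{very_good}) again together with $r(Q_1)=\cdots=r(Q_m)$ for the remaining inequalities; part (ii) is handled identically via Theorem~\ref{good_work}(ii). The only cosmetic difference is that the paper writes the final step as $(r(Q_1)\cdots r(Q_m))^{1/(2m^2)}$ before collapsing it, whereas you phrase it directly via (\ref{gl1vecr}).
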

\begin{proof} First we prove (\ref{KAperm}). By (\ref{L2}) and (\ref{very_good}) we have
$$\|H_1^{(\frac{1}{m})}\circ\cdots \circ H_m^{(\frac{1}{m})}\|= r((H_1^{(\frac{1}{m})}\circ\cdots \circ H_m^{(\frac{1}{m})})^*(H_1^{(\frac{1}{m})}\circ\cdots \circ H_m^{(\frac{1}{m})}))^{\frac{1}{2}}=$$
$$r(((H_{\tau{(1)}}^*)^{(\frac{1}{m})}\circ\cdots \circ (H_{\tau{(m)}}^*)^{(\frac{1}{m})})((H_{\nu{(1)}})^{(\frac{1}{m})}\circ\cdots \circ(H_{\nu{(m)}})^{(\frac{1}{m})}))^{\frac{1}{2}}$$
$$\le r((H_{\tau{(1)}}^*H_{\nu{(1)}})^{(\frac{1}{m})}\circ\cdots \circ(H_{\tau{(m)}}^*H_{\nu{(m)}})^{(\frac{1}{m})})^{\frac{1}{2}}.$$
Notice that
$$ ((H_{\tau{(1)}}^*H_{\nu{(1)}})^{(\frac{1}{m})}\circ\cdots \circ(H_{\tau{(m)}}^*H_{\nu{(m)}})^{(\frac{1}{m})})^m=((H_{\tau{(1)}}^*H_{\nu{(1)}})^{(\frac{1}{m})}\circ\cdots \circ(H_{\tau{(m)}}^*H_{\nu{(m)}})^{(\frac{1}{m})})$$
$$((H_{\tau{(2)}}^*H_{\nu{(2)}})^{(\frac{1}{m})}\circ\cdots \circ(H_{\tau{(1)}}^*H_{\nu{(1)}})^{(\frac{1}{m})})\cdots
((H_{\tau{(m)}}^*H_{\nu{(m)}})^{(\frac{1}{m})}\circ\cdots \circ(H_{\tau{(m-1)}}^*H_{\nu{(m-1)}})^{(\frac{1}{m})}).$$
 It follows by (\ref{very_good}) that
$$r((H_{\tau{(1)}}^*H_{\nu{(1)}})^{(\frac{1}{m})}\circ\cdots \circ(H_{\tau{(m)}}^*H_{\nu{(m)}})^{(\frac{1}{m})})^{\frac{1}{2}}\le r(Q_1^{(\frac{1}{m})}\circ\cdots \circ Q_m^{(\frac{1}{m})})^{\frac{1}{2m}}$$
$$\le ( r(Q_1)\cdots r(Q_m))^{\frac{1}{2m^2}}=r(H_{\tau{(1)}}^*H_{\nu{(1)}}\cdots H_{\tau{(m)}}^*H_{\nu{(m)}})^{\frac{1}{2m}},$$
where the last equality follows from $r(Q_1)= \ldots =r(Q_m)$. This
 completes the proof of (\ref{KAperm}). The proof of (\ref{KAperm2}) is similar by applying Theorem \ref{good_work}(ii).
\end{proof}

The following corollary is a refinement of \cite[Inequality (4.11)]{P18a}, which differs from 
refinements in \cite[Inequalities (4.15) and (4.17)]{P18a}. It also extends and generalizes \cite[Corollary 2.15]{Z18}.
\begin{corollary}
\label{lih}
Let $m$ be odd and let $H_1,\ldots,H_m$ be positive kernel operators on $L^{2}(X,\mu)$. 

\noindent (i) Then
$$\|H_1^{(\frac{1}{m})}\circ\cdots \circ H_m^{(\frac{1}{m})}\|$$
$$\le r((H_1^*H_2)^{(\frac{1}{m})}\circ\cdots \circ(H_{m-2}^*H_{m-1})^{(\frac{1}{m})}\circ(H_m^*H_1)^{(\frac{1}{m})}\circ(H_2^*H_3)^{(\frac{1}{m})}\circ\cdots \circ(H_{m-1}^*H_m)^{(\frac{1}{m})})^{\frac{1}{2}}$$
\be
\le r(H_1^*H_2\cdots H_{m-2}^*H_{m-1}H_m^*H_1H_2^*H_3\cdots H_{m-1}^*H_m)^{\frac{1}{2m}}.
\label{lih1}
\ee

\noindent (ii) If $ H_1, \ldots , H_m$ are nonnegative matrices that define operators on $l^2 (R)$ and if $\alpha \ge \frac{1}{m}$, then
$$\|H_1 ^{(\alpha)}\circ\cdots \circ H_m ^{(\alpha)}\|$$
$$\le r((H_1^*H_2)^{(\alpha)}\circ\cdots \circ(H_{m-2}^*H_{m-1})^{(\alpha)}\circ(H_m^*H_1)^{(\alpha)}\circ(H_2^*H_3)^{(\alpha)}\circ\cdots \circ(H_{m-1}^*H_m)^{(\alpha)})^{\frac{1}{2}}$$
\be
\le r(H_1^*H_2\cdots H_{m-2}^*H_{m-1}H_m^*H_1H_2^*H_3\cdots H_{m-1}^*H_m)^{\frac{\alpha}{2}}.
\label{lih2}
\ee

\end{corollary}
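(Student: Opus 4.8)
The plan is to deduce Corollary \ref{lih} directly from Theorem \ref{12} by choosing the permutations $\tau$ and $\nu$ appropriately, exploiting the fact that $m$ is odd so that a single cyclic shift of a sequence of length $m$ produces exactly the pairing that appears in the statement. First I would set, in Theorem \ref{12}, $\tau = \mathrm{id}$ (the identity permutation) and $\nu$ the cyclic shift $\nu(j) = j+1$ for $j = 1, \ldots, m-1$ and $\nu(m) = 1$. With this choice $H_{\tau(j)}^* H_{\nu(j)} = H_j^* H_{j+1}$ for $j < m$ and $H_{\tau(m)}^* H_{\nu(m)} = H_m^* H_1$, so the Hadamard weighted geometric mean appearing in the middle term of (\ref{KAperm}) becomes exactly
$$(H_1^*H_2)^{(\frac{1}{m})}\circ\cdots \circ(H_{m-1}^*H_m)^{(\frac{1}{m})}\circ(H_m^*H_1)^{(\frac{1}{m})},$$
which, by commutativity of the Hadamard product, is the operator displayed in (\ref{lih1}) (the factor $(H_m^*H_1)^{(1/m)}$ may be moved past $(H_{m-1}^*H_m)^{(1/m)}$ to land between $(H_{m-2}^*H_{m-1})^{(1/m)}$ and $(H_2^*H_3)^{(1/m)}$, matching the stated order). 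This immediately gives the first inequality in (\ref{lih1}).

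Next I would unravel the ordinary product $Q_1 = H_{\tau(1)}^* H_{\nu(1)} \cdots H_{\tau(m)}^* H_{\nu(m)}$ under the same choice of $\tau,\nu$: it equals
$$H_1^*H_2\,H_2^*H_3\cdots H_{m-1}^*H_m\,H_m^*H_1,$$
but to match the exact product written in (\ref{lih1}) I need the ordering $H_1^*H_2\cdots H_{m-2}^*H_{m-1}H_m^*H_1H_2^*H_3\cdots H_{m-1}^*H_m$. Here is where oddness of $m$ enters: one checks that the cyclic word $(H_1^*H_2)(H_2^*H_3)\cdots(H_m^*H_1)$ of $m$ length-two blocks, when traversed starting from a suitable block and with step two around the cycle (which is a single cycle precisely because $\gcd(2,m)=1$ for odd $m$), visits the blocks in exactly the order $H_1^*H_2,\ H_3^*H_4,\ \ldots,\ H_{m-2}^*H_{m-1},\ H_m^*H_1,\ H_2^*H_3,\ \ldots,\ H_{m-1}^*H_m$. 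Since the spectral radius is invariant under cyclic permutation of a product, $r(Q_1) = r(H_1^*H_2\cdots H_{m-2}^*H_{m-1}H_m^*H_1H_2^*H_3\cdots H_{m-1}^*H_m)$, so the last term of (\ref{KAperm}) is exactly the right-hand side of (\ref{lih1}). Combining, (\ref{lih1}) follows from the chain (\ref{KAperm}), where the middle inequality $r(Q_1^{(1/m)}\circ\cdots\circ Q_m^{(1/m)})^{1/(2m)}$ of Theorem \ref{12} is simply dropped (used as an intermediate bound that we do not display).

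For part (ii), the argument is verbatim the same, invoking (\ref{KAperm2}) in place of (\ref{KAperm}) with the identical choice $\tau=\mathrm{id}$, $\nu$ the cyclic shift, and using the hypothesis $\alpha\ge\frac{1}{m}$; the exponent $\frac{\alpha}{2}$ on the right-hand side of (\ref{lih2}) comes from the corresponding exponent in (\ref{KAperm2}), and the same cyclic-reordering observation identifies $r(Q_1)$ with $r(H_1^*H_2\cdots H_{m-1}^*H_m)$ written in the stated order. The only genuinely non-routine point — the main obstacle, though a mild one — is verifying carefully that the step-two traversal of the $m$-cycle of blocks reproduces precisely the block order written in the corollary; this is a small combinatorial bookkeeping check that relies essentially on $m$ being odd, and it is exactly the reason this corollary is stated only for odd $m$ (for even $m$ the analogous step-two traversal splits into two cycles, which is the situation handled separately in the even-$m$ results above). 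Everything else is a direct substitution into Theorem \ref{12} together with commutativity of the Hadamard product and cyclic invariance of the spectral radius.
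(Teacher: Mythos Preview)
Your argument for the first inequality in (\ref{lih1}) is fine: with $\tau=\mathrm{id}$ and $\nu$ the cyclic shift, the Hadamard factors $H_j^*H_{j+1}$ agree (up to commuting the Hadamard product) with those in the statement. The problem is in the second inequality. With your choice of $\tau,\nu$, the product $Q_1$ that Theorem~\ref{12} produces is
\[
Q_1 \;=\; (H_1^*H_2)(H_2^*H_3)\cdots(H_{m-1}^*H_m)(H_m^*H_1)\;=\;B_1B_2\cdots B_m,
\]
whereas the right-hand side of (\ref{lih1}) is $r(B_1B_3B_5\cdots B_mB_2B_4\cdots B_{m-1})^{1/(2m)}$, i.e.\ the blocks in step-two order. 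You then assert that $r(B_1B_2\cdots B_m)=r(B_1B_3\cdots B_{m-1})$ ``by cyclic invariance,'' but this is where the argument breaks. Cyclic invariance of the spectral radius gives only $r(C_1\cdots C_n)=r(C_kC_{k+1}\cdots C_nC_1\cdots C_{k-1})$; it does \emph{not} allow you to permute the factors by a step-two traversal. The fact that the map $j\mapsto 2j$ is a generator of $\mathbb{Z}/m\mathbb{Z}$ for odd $m$ tells you that the step-two traversal visits every block exactly once, but the resulting word $B_1B_3B_5\cdots$ is \emph{not} a cyclic rotation of $B_1B_2B_3\cdots$. Already for $m=3$ one has $r(B_1B_2B_3)$ versus $r(B_1B_3B_2)$, and these are generically different even when $B_i=H_i^*H_{i+1}$.

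The paper avoids this problem by choosing the permutations so that the factors $H_{\tau(j)}^*H_{\nu(j)}$ come out \emph{directly} in the step-two order: take $\tau(j)=2j-1$ for $1\le j\le\frac{m+1}{2}$, $\tau(j)=2\bigl(j-\frac{m+1}{2}\bigr)$ for $\frac{m+3}{2}\le j\le m$, and $\nu(j)=2j$ for $1\le j\le\frac{m-1}{2}$, $\nu(j)=2\bigl(j-\frac{m-1}{2}\bigr)-1$ for $\frac{m+1}{2}\le j\le m$. With this choice $H_{\tau(j)}^*H_{\nu(j)}$ runs through $H_1^*H_2,\,H_3^*H_4,\ldots,H_{m-2}^*H_{m-1},\,H_m^*H_1,\,H_2^*H_3,\ldots,H_{m-1}^*H_m$ in that order, so the ordinary product $Q_1$ in Theorem~\ref{12} is exactly the product appearing in (\ref{lih1}), and no rearrangement is needed. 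The role of odd $m$ is precisely that these $\tau,\nu$ are genuine permutations of $\{1,\ldots,m\}$. Your approach with the identity and a cyclic shift does give a valid inequality from Theorem~\ref{12}, but with the different right-hand side $r(H_1^*H_2H_2^*H_3\cdots H_m^*H_1)^{1/(2m)}$, which is not the one in the corollary.
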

\begin{proof}
The result follows by taking  the permutations
$\tau{(j)}=2j-1$ for $1\le j\le\frac{m+1}{2}$; $\tau{(j)}=2(j-\frac{m+1}{2})$ for $\frac{m+3}{2}\le j \le m$ and $\nu{(j)}=2j$ for $1\le j\le\frac{m-1}{2}$; $\nu{(j)}=2(j-\frac{m-1}{2})-1$ for $\frac{m+1}{2}\le j\le m$ in Theorem \ref{12}.
\end{proof}

The following corollary gives new lower bounds for  the operator norm of the Jordan triple product $ABA$,
which differ from the one obtained in \cite[Corollary 4.10]{P18a}. The result follows from Corollary \ref{lih} and Theorem \ref{12} by taking $H_1=A$, $H_2=B^*$ and $H_3=A$. 
\begin{corollary}
 Let $A$ and $B$ be positive kernel operators on $L^2(X, \mu)$. 

 \noindent (i) Then
\begin{eqnarray}
\nonumber
& & \|A^{\left(\frac{1}{3}\right)} \circ (B^*)^{\left(\frac{1}{3}\right)} \circ  A^{\left(\frac{1}{3}\right)} \|  \\
\nonumber
 &\le & r ^{\frac{1}{2}} \left( (A ^* B^*)^{\left(\frac{1}{3}\right)} \circ  
 (A^*A) )^{\left(\frac{1}{3}\right)} \circ (B A)^{\left(\frac{1}{3}\right)} \right) \\
\nonumber
 &\le & r ^{\frac{1}{6}} \left( (A ^* B^*A ^*ABA)^{\left(\frac{1}{3}\right)} \circ 
 ( A^*  ABAA^*B^*)^{\left(\frac{1}{3}\right)} \circ (B AA ^* B^*A^*A) ^{\left(\frac{1}{3}\right)} \right) \\
& \le & \|ABA\|^{\frac{1}{3}}.
\label{tri_lih_jordan}
\end{eqnarray}
\noindent (ii)
If $A$ and $B$  are nonnegative matrices that define operators on $l^2 (R)$ and  if $\alpha \ge \frac{1}{3}$, then
\begin{eqnarray}
\nonumber
& & \|A^{\left(\alpha \right)} \circ (B^*)^{\left( \alpha\right)} \circ A^{\left( \alpha \right)} \| \\
\nonumber
 &\le & r ^{\frac{1}{2}} \left( (A ^* B^*)^{\left(\alpha \right)} \circ  
 (A^*A) )^{\left(\alpha \right)} \circ (B A)^{\left(\alpha \right)} \right) \\
\nonumber
 &\le & r ^{\frac{1}{6}} \left( (A ^* B^*A ^*ABA)^{\left(\alpha \right)} \circ 
 ( A^*  ABAA^*B^*)^{\left(\alpha \right)} \circ (B AA ^* B^*A^*A) ^{\left(\alpha \right)} \right) \\
& \le & \|ABA\|^{\alpha}.
\label{matrix_jordan}
\end{eqnarray}
\label{jordan}
\end{corollary}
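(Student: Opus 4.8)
The plan is to specialize Theorem~\ref{12} (equivalently, the odd case $m=3$ of Corollary~\ref{lih}) to $H_1=A$, $H_2=B^*$, $H_3=A$, and then to reinterpret the outermost term via \eqref{L2}. Thus I would not need any new idea: everything is an instance of Theorem~\ref{12}, and the only work is the substitution.

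Concretely, I would first fix the two permutations from the proof of Corollary~\ref{lih} at $m=3$, namely $\tau(1)=1,\ \tau(2)=3,\ \tau(3)=2$ and $\nu(1)=2,\ \nu(2)=1,\ \nu(3)=3$. With $H_1=A$, $H_2=B^*$, $H_3=A$ one has $H_2^*=B$, so the three factors $H_{\tau(j)}^*H_{\nu(j)}$ become $A^*B^*$, $A^*A$, $BA$ for $j=1,2,3$, and the cyclic products $Q_1,Q_2,Q_3$ of Theorem~\ref{12} become $A^*B^*A^*ABA$, $A^*ABAA^*B^*$, $BAA^*B^*A^*A$. Plugging these into \eqref{KAperm} reproduces verbatim the first three members of \eqref{tri_lih_jordan} in part~(i); plugging them into \eqref{KAperm2} (whose hypothesis $\alpha\ge\tfrac1m=\tfrac13$ is exactly the hypothesis of part~(ii)) reproduces the first three members of \eqref{matrix_jordan}.

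It then remains to rewrite the last member. For part~(i) the last term of \eqref{KAperm} is $r\bigl(H_{\tau(1)}^*H_{\nu(1)}\cdots H_{\tau(3)}^*H_{\nu(3)}\bigr)^{1/6}=r(A^*B^*A^*ABA)^{1/6}$; since $A^*B^*A^*ABA=(ABA)^*(ABA)$, relation \eqref{L2} applied with $H=ABA$ gives $r(A^*B^*A^*ABA)=\|ABA\|^2$, so this term equals $\|ABA\|^{1/3}$. Part~(ii) is identical, using Theorem~\ref{12}(ii): the last term is $r(A^*B^*A^*ABA)^{\alpha/2}=\|ABA\|^{\alpha}$, again by \eqref{L2}.

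The only delicate point — and it is purely combinatorial, not analytic — is the bookkeeping: one must check that the chosen $\tau,\nu$ really produce the ordered products $A^*B^*$, $A^*A$, $BA$ inside the Hadamard weighted geometric mean and the three cyclic products $A^*B^*A^*ABA$, $A^*ABAA^*B^*$, $BAA^*B^*A^*A$ listed in the statement, while keeping careful track of the adjoints via $(B^*)^*=B$ and $(ABA)^*=A^*B^*A^*$. Once these substitutions are verified, both \eqref{tri_lih_jordan} and \eqref{matrix_jordan} are immediate specializations of Theorem~\ref{12}, so there is no further obstacle.
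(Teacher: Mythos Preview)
Your proposal is correct and matches the paper's own proof, which simply states that the result follows from Corollary~\ref{lih} and Theorem~\ref{12} by taking $H_1=A$, $H_2=B^*$, $H_3=A$. You have in fact supplied more detail than the paper does, including the explicit identification of the permutations, the verification of the $Q_j$, and the use of \eqref{L2} to rewrite $r(A^*B^*A^*ABA)^{1/(2m)}$ as $\|ABA\|^{1/3}$ (resp.\ $\|ABA\|^{\alpha}$).
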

The following result generalizes \cite[Inequality (2.12)]{Z18}.
\begin{lemma} Let $\alpha \ge \frac{1}{2}$ and let $C$ be a nonnegative matrix that defines an operator on $l^2 (R)$. Then 
$$r(C^{(\alpha)}\circ (C^*)^{(\alpha)}) \le r(C^{(\alpha)} \circ C^{(\alpha)})\le r(C)^{2\alpha}.$$
\label{Zh}
\end{lemma}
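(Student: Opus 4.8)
The plan is to prove the two inequalities separately. For the second inequality $r(C^{(\alpha)} \circ C^{(\alpha)}) \le r(C)^{2\alpha}$, I would invoke inequality (\ref{gl3tr}) from Theorem \ref{good_work}(ii): since $\alpha \ge \frac12$, we have $2\alpha \ge 1$, and $C^{(\alpha)} \circ C^{(\alpha)}$ is the Hadamard square of $C^{(\alpha)}$, i.e.\ $(C^{(\alpha)})^{(2)} = C^{(2\alpha)}$ as a kernel/matrix. More directly, one applies (\ref{gl1vecr}) or the submultiplicativity-type bound: writing $C^{(\alpha)}\circ C^{(\alpha)} = (C\circ C)^{(\alpha)}$ is \emph{not} what we want; rather, the correct reading is that $C^{(\alpha)}\circ C^{(\alpha)}$ has entries $c(i,j)^{2\alpha}$, so it equals $C^{(2\alpha)}$, and then $r(C^{(2\alpha)}) = r((C^{(2)})^{(\alpha)}) \le \cdots$. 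Actually the cleanest route: by (\ref{t_dobro}) with $t = 2\alpha \ge 1$ and $n=1$ this is trivial, so instead use (\ref{gl3tr}) in the form $r(C^{(t)}) \le r(C)^t$ which is the $n=1$ instance of (\ref{gl3tr}) (with $K_1 = C$), giving $r(C^{(2\alpha)}) \le r(C)^{2\alpha}$. This handles the right-hand inequality.

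For the left-hand inequality $r(C^{(\alpha)}\circ (C^*)^{(\alpha)}) \le r(C^{(\alpha)}\circ C^{(\alpha)})$, the natural approach is a direct entrywise (matrix monotonicity) comparison combined with a symmetrization/permutation-similarity argument, since $r(C^{(\alpha)}\circ(C^*)^{(\alpha)})$ need not obviously dominate or be dominated by $r(C^{(2\alpha)})$ entrywise. The matrix $M := C^{(\alpha)}\circ(C^*)^{(\alpha)}$ has $(i,j)$ entry $c(i,j)^\alpha c(j,i)^\alpha$ and is symmetric, hence selfadjoint on $l^2(R)$; its spectral radius equals its norm. The matrix $N := C^{(\alpha)}\circ C^{(\alpha)} = C^{(2\alpha)}$ has $(i,j)$ entry $c(i,j)^{2\alpha}$. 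I would estimate $r(M) = \|M\|$ by a Schur-test / Cauchy--Schwarz argument: for $x \in l^2(R)_+$,
$$\langle M x, x\rangle = \sum_{i,j} c(i,j)^\alpha c(j,i)^\alpha x_i x_j \le \sum_{i,j} c(i,j)^{2\alpha} x_i x_j = \langle N x, x\rangle,$$
where the middle step uses $c(i,j)^\alpha c(j,i)^\alpha x_i x_j \le \frac12\big(c(i,j)^{2\alpha}x_i^2 + c(j,i)^{2\alpha}x_j^2\big)$, summing and using symmetry of the index swap. Since $M$ is selfadjoint and positive, $r(M) = \sup\{\langle Mx,x\rangle : \|x\|_2 = 1, x \ge 0\}$ (the positive-vector restriction is legitimate because $M$ has nonnegative entries), and $\langle Nx,x\rangle \le r_{\mathrm{num}}(N)$-type bounds give $\sup_{\|x\|=1,x\ge 0}\langle Nx,x\rangle = w(N) = \ldots$; but since we only need $r(M) \le r(N)$, and $r(N) = \|N^{(1/2)}\ldots\|$... here one must be careful: $\langle Nx,x\rangle$ is bounded by $w(N)$, not immediately by $r(N)$, unless $N$ is selfadjoint — but $N = C^{(2\alpha)}$ need not be symmetric.

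The main obstacle is precisely this last point: passing from the quadratic-form bound $\langle Mx,x\rangle \le \langle Nx,x\rangle$ to a spectral-radius bound $r(M) \le r(N)$ when $N$ is not selfadjoint. I would circumvent it by instead comparing $M$ with the \emph{symmetrized} operator: note $N$ and $N^* = (C^*)^{(2\alpha)}$ have the same spectral radius (transpose), and $S_\alpha$-type reasoning (Proposition \ref{weig_sym}, or its matrix analogue) shows $r(C^{(\alpha)}\circ(C^*)^{(\alpha)}) = r(S_\alpha(C^{(2\alpha)})^{1/1}\ldots)$ — more concretely, observe $M = (C^{(2\alpha)})^{(1/2)} \circ ((C^{(2\alpha)})^*)^{(1/2)} = S(C^{(2\alpha)}) = S(N)$, the geometric symmetrization of $N$. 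Then by inequality (\ref{Schwenk_rho}) of Proposition \ref{weig_sym} (valid for $\rho = r$), $r(S(N)) \le r(N) = r(C^{(2\alpha)}) \le r(C)^{2\alpha}$, which gives \emph{both} inequalities at once. So the real plan is: (1) identify $C^{(\alpha)}\circ(C^*)^{(\alpha)}$ as the geometric symmetrization $S(C^{(2\alpha)})$; (2) apply $r(S(\cdot)) \le r(\cdot)$ from (\ref{Schwenk_rho}) — or rather its matrix version, since $C^{(2\alpha)}$ is a matrix on $l^2(R)$ defining an operator by Theorem \ref{good_work}(ii) as $2\alpha \ge 1$; (3) apply (\ref{gl3tr}) with $t = 2\alpha$ to bound $r(C^{(2\alpha)}) \le r(C)^{2\alpha}$. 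The one subtlety to verify carefully is that $C^{(2\alpha)}$ indeed defines an operator on $l^2(R)$ so that the symmetrization machinery and (\ref{Schwenk_rho}) apply — this is exactly guaranteed by Theorem \ref{good_work}(ii) since $t = 2\alpha \ge 1$.
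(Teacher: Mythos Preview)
Your final plan is correct and is essentially the paper's argument: the paper rewrites $C^{(\alpha)}\circ (C^*)^{(\alpha)} = (C^{(\alpha)}\circ C^{(\alpha)})^{(1/2)}\circ((C^*)^{(\alpha)}\circ (C^*)^{(\alpha)})^{(1/2)}$ and applies (\ref{gl1vecr}) from Theorem~\ref{good_work}(ii) twice, which is exactly your identification $M=S(C^{(2\alpha)})$ followed by (\ref{Schwenk_rho}) and then (\ref{gl3tr})/(\ref{gl1vecr}) with $s_n=2\alpha\ge1$. The quadratic-form detour via $\langle Mx,x\rangle\le\langle Nx,x\rangle$ was unnecessary, and you correctly diagnosed why it stalls.
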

\begin{proof} 
By (\ref{gl1vecr}) in Theorem \ref{good_work}(ii) applied twice we have
$$r(C^{(\alpha)}\circ (C^*)^{(\alpha)}) = r((C^{(\alpha)} \circ C^{(\alpha)})^{(\frac{1}{2})}\circ ((C^*)^{(\alpha)} \circ (C^*)^{(\alpha)})^{(\frac{1}{2})})$$
$$ \le r(C^{(\alpha)} \circ C^{(\alpha)})^{\frac{1}{2}}r ((C^*)^{(\alpha)} \circ (C^*)^{(\alpha)})^{\frac{1}{2}}=r(C^{(\alpha)} \circ C^{(\alpha)})\le r(C)^{2\alpha},$$
which completes the proof.
\end{proof}
The following result generalizes \cite[Theorem 2.17]{Z18} and refines \cite[Inequalities (4.9)]{P18a}. It follows e.g. from Theorem \ref{12} (or \cite[Inequalities (4.9)]{P18a}) and Lemma \ref{Zh}.
\begin{corollary} Let $\alpha \ge \frac{1}{2}$ and let $A$ and $B$ be nonnegative matrices that define operators on $l^2 (R)$. Then 

$$
\|A^{(\alpha)} \circ  B^{(\alpha)} \| \le r ^{\frac{1}{2}} \left ( (A^* B ) ^{(\alpha)}\circ (B ^* A)^{(\alpha)} \right) $$
\be
\le r ^{\frac{1}{2}} \left ( (A^* B ) ^{(\alpha)}\circ (A^* B ) ^{(\alpha)}\right) 
\le r ^{\alpha} (A^* B ),  
\label{matrixT}
\ee
\label{ok2}
\end{corollary}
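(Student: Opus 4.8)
The statement to prove is Corollary \ref{ok2}, namely that for $\alpha \ge \frac{1}{2}$ and nonnegative matrices $A$, $B$ defining operators on $l^2(R)$,
\[
\|A^{(\alpha)} \circ B^{(\alpha)}\| \le r^{1/2}\big((A^*B)^{(\alpha)} \circ (B^*A)^{(\alpha)}\big) \le r^{1/2}\big((A^*B)^{(\alpha)} \circ (A^*B)^{(\alpha)}\big) \le r^\alpha(A^*B).
\]
The first inequality should follow from Theorem \ref{12}. The second comes directly from Lemma \ref{Zh} applied to a suitable matrix. The third comes again from Lemma \ref{Zh} (its second inequality). So the whole argument is a short chain of citations.

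First I would obtain the leftmost inequality. In Theorem \ref{12}, part (ii), take $m=2$, the identity permutation for $\tau$, and the transposition $\nu = (1\;2)$ (so that $\nu(1)=2$, $\nu(2)=1$), with $H_1 = A$, $H_2 = B$. The hypothesis $\alpha \ge \frac{1}{m} = \frac{1}{2}$ matches our assumption. Then $H_{\tau(1)}^* H_{\nu(1)} = A^* B$ and $H_{\tau(2)}^* H_{\nu(2)} = B^* A$, and the first inequality in (\ref{KAperm2}) reads exactly
\[
\|A^{(\alpha)} \circ B^{(\alpha)}\| \le r\big((A^*B)^{(\alpha)} \circ (B^*A)^{(\alpha)}\big)^{1/2}.
\]
(Alternatively, as the paper notes, one can cite \cite[Inequalities (4.9)]{P18a} directly.) This handles the first step with no computation beyond unwinding the definitions.

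Next I would apply Lemma \ref{Zh} with $C = A^* B$. Since $C$ is a product of two nonnegative matrices defining operators on $l^2(R)$, it too defines an operator on $l^2(R)$, so the lemma applies for $\alpha \ge \frac12$ and gives
\[
r\big((A^*B)^{(\alpha)} \circ ((A^*B)^*)^{(\alpha)}\big) \le r\big((A^*B)^{(\alpha)} \circ (A^*B)^{(\alpha)}\big) \le r(A^*B)^{2\alpha}.
\]
Here one observes $(A^*B)^* = B^*A$, so the left-hand side is $r\big((A^*B)^{(\alpha)} \circ (B^*A)^{(\alpha)}\big)$, which matches the quantity under the square root in the first inequality. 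Taking square roots throughout and concatenating with the first step yields precisely (\ref{matrixT}), completing the proof.

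The argument is essentially free of obstacles; the only point requiring a moment's care is matching the permutations in Theorem \ref{12} so that the pair $(A^*B, B^*A)$ appears in the right order, and noting that $C = A^*B$ indeed defines an operator on $l^2(R)$ so that Lemma \ref{Zh} is applicable. Everything else is a direct substitution into already-established inequalities.
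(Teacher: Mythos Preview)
Your proof is correct and follows precisely the approach indicated in the paper: the first inequality is obtained from Theorem \ref{12}(ii) (with the specific choice $m=2$, $\tau=\mathrm{id}$, $\nu=(1\,2)$ that you spell out), and the remaining two inequalities come from Lemma \ref{Zh} applied to $C=A^*B$, using $(A^*B)^*=B^*A$. Your write-up simply makes explicit the substitutions that the paper leaves to the reader.
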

\begin{remark} {\rm Several results of Section 3 can be further refined by applying  Theorems \ref{ref_powers} and \ref{ugly_ref} in the proofs. We omit the details.
}
\end{remark}

\bigskip

\baselineskip 5mm

{\it Acknowledgments.}
The first author acknowledges a partial support of Erasmus+ European Mobility program (grant KA103), COST Short Term Scientific Mission program (action CA18232) and the Slovenian Research Agency (grants P1-0222 and P1-0288). The first author thanks the colleagues and staff at the Faculty of Mechanical Engineering and Institute of Mathematics, Physics and Mechanics for their hospitality during the research stay in Slovenia. The second author acknowledges a partial support of  the Slovenian Research Agency (grants P1-0222, J1-8133 and J2-2512).\\


\vspace{2mm}

\noindent


\begin{thebibliography}{9999}

\bibitem{AA02} Y.A. Abramovich and C.D. Aliprantis,
{\it An invitation to operator theory}, American Mathematical Society, Providence, 2002.

\bibitem{AB85} C.D. Aliprantis and O. Burkinshaw,
    {\it Positive operators}, Academic Press, Orlando, 1985.




 \bibitem{A10}  K.M.R. Audenaert, Spectral radius of Hadamard product versus conventional product for non-negative matrices, 
{\it Linear Algebra Appl.} 432 (2010), 366--368

\bibitem{BP03} K. Balachandran and J.Y. Park, Existence of solutions and controllability of nonlinear integrodifferential systems in Banach spaces, {\it  Math. Problems in Engineering} 2 (2003), 65--79. 

\bibitem{BR97} R.B. Bapat and T.E.S. Raghavan, {\it Nonnegative Matrices and Applications}, Encyclopedia of Mathe- \\
matics and its Applications 64, Cambridge University Press, 1997.



\bibitem{BS88} C. Bennett and R. Sharpley, {\it Interpolation of Operators},
         Academic Press, Inc., Orlando, 1988.  


\bibitem{CZ16} D. Chen and Y. Zhang,  On the spectral radius of Hadamard products of nonnegative matrices, {\it Banach J. Math. Anal.}  9 (2015), 127--133.

\bibitem{C79} J.E. Cohen, Random evolutions and the spectral radius of a nonnegative matrix,
            {\it Math. Proc. Camb. Phil. Soc.} 86 (1979), 345--350.

\bibitem{C81} J.E. Cohen, Convexity of the dominant eigenvalue of an essentially nonnegative matrix, 
{\it Proc. Amer. Math. Soc} 81 (1981), 657--658.

\bibitem{CR07} G.P. Curbera and W.J. Ricker, Compactness properties of Sobolev imbeddings for rearrangement invariant norms, 
 {\it Transactions AMS} 359  (2007),  1471--1484. 


 
 \bibitem{DLR13} P. Degond, J.-G. Liu and C. Ringhofer, 
Evolution of the distribution of wealth in an economic environment driven by local Nash equilibria,  {\it Journal of Statistical Physics} 154 (2014), 751--780.
 

\bibitem{DN84} E. Deutsch, M. Neumann, Derivates of the Perron root at an essentially nonnegative matrix and the group
inverse of an M-matrix, {\it J. Math. Anal. Appl.} 102 (1984), 1--29.

\bibitem{D92} R. Drnov\v sek, Spectral inequalities for compact
        integral operators on Banach function spaces,
        {\it Math. Proc. Camb. Phil. Soc.} 112 (1992), 589--598.
        
\bibitem{Drn19}  R. Drnov\v sek, Sequences of bounds for the spectral radius of a positive operator, {\it Linear Algebra Appl.} 574, (2019), 40--45.        






\bibitem{DP05} R. Drnov\v sek, A. Peperko, Inequalities for the Hadamard
        weighted geometric mean of positive kernel operators on Banach function spaces,
        {\it Positivity} 10 (2006), 613--626.

\bibitem{DP10} R. Drnov\v sek and A. Peperko,  On the spectral radius of positive operators on Banach
sequence spaces,  {\it Linear Algebra Appl.}  433 (2010), 241--247.

\bibitem{DP16} R. Drnov\v sek and A. Peperko, Inequalities on the spectral radius and the operator norm of Hadamard products of positive operators on sequence spaces, {\it Banach J. Math. Anal.} 10 (2016), 800--814.

%





\bibitem{E84} L. Elsner, On convexity properties of the spectral radius of nonnegative matrices,
{\it Linear Algebra Appl. 61}  (1984), 31--35.

\bibitem{EJS88} L. Elsner, C.R. Johnson and J.A. Dias Da Silva,
    The Perron root of a weighted geometric mean of nonnegative matrices,
    {\it Linear Mult. Algebra} 24 (1989), 1--13.

\bibitem{F81} S. Friedland, Convex spectral functions, {\it Lin. Mult. Alg.} 9 (1981), 299--316.







\bibitem{HJ91} R.A. Horn, C.R. Johnson, {\it Topics in matrix analysis}, Cambridge University Press, 1999.

 \bibitem{HZ11}  R.A. Horn and F. Zhang, Bounds on the spectral radius of a Hadamard product of nonnegative or positive semidefinite matrices, 
 {\it Electron. J. Linear Algebra} 20 (2010), 90--94.


\bibitem{HLS97} W. Huang, C.-K. Li, H. Schneider, Norms and inequalities related to Schur products of rectangular matrices,
{\it SIAM J. Matrix Anal. Appl.} 18 (1997), 334--347.


\bibitem{J82} K. J\"{o}rgens, {\it Linear integral operators}, Surveys and Reference Works in Mathematics 7, Pitman Press, 1982.



\bibitem{KO85} S. Karlin, F. Ost, Some monotonicity properties of Schur powers of matrices and related inequalities,
{\it Linear Algebra Appl.} 68 (1985), 47--65.

\bibitem{Ka82} T. Kato, Superconvexity of the spectral radius, and convexity of the spectral bound and the type,
          {\it Math. Z.} 180 (1982), 265--273.
         
\bibitem{KM99} A.V. Kazhikhov and  A.E. Mamontov, Transport equations and Orlicz spaces, 535--544. In: Jeltsch R., Fey M. (eds) Hyperbolic Problems: Theory, Numerics, Applications. International Series of Numerical Mathematics, vol 130. Birkh\"{a}user, Basel, 1999. 

\bibitem{K61} J.F.C. Kingman, A convexity property of positive matrices,
        {\it Quart. J. Math. Oxford Ser. (2)} 12 (1961), 283--284.

\bibitem{LL05} J. Lafferty  and G. Lebanon, Diffusion kernels on statistical manifolds, {\it Journal of Machine Learning Research} 6 (2005), 129--163.







\bibitem{Me91} P. Meyer-Nieberg, {\it Banach lattices},
    Springer-Verlag, Berlin, 1991.

\bibitem{MP13} V. M\"uller and A. Peperko, Generalized spectral radius and its max algebra version,
{\it Linear Algebra Appl.} 439  (2013), 1006--1016.

\bibitem{MP18} V. M\"{u}ller, A. Peperko, Lower spectral radius and spectral mapping theorem for suprema
preserving mappings, {\it Discrete Dinamical Systems – Series A} 38 (2018), no 8, 4117--4132. 



\bibitem{P06} A. Peperko, Inequalities for the spectral radius of nonnegative functions,
 {\it Positivity} 13 (2009), 255--272.  


\bibitem{P11} A. Peperko, On the functional inequality for the spectral radius of compact operators, {\it Linear Mult. Algebra}
 59 (2011), no. 4, 357--364.

\bibitem{P12} A. Peperko, Bounds on the generalized and the joint spectral radius of Hadamard products of bounded sets of positive operators on sequence 
spaces, {\it Linear Algebra  Appl.} 437 (2012), 189--201.

\bibitem{P17} A. Peperko,  Bounds on the joint and generalized spectral radius of the Hadamard geometric mean of bounded sets of positive kernel operators,  
{\it Linear Algebra  Appl.} 533 (2017), 418--427.

\bibitem{P18a} A. Peperko, Inequalities on the spectral radius, operator norm and
numerical radius of the Hadamard weighted geometric mean of positive kernel operators, {\it Linear Mult. Algebra}, 67:8 (2019), 1637--1652. 

\bibitem{P18b}  A. Peperko, Inequalities on the joint and generalized spectral and essential spectral radius  of the Hadamard geometric mean of bounded sets of positive kernel operators,  {\it Linear Mult. Algebra} 67 (2019), 2159--2172.

\bibitem{P18c} A. Peperko, Logaritmic convexity of fixed points of stochastic kernel operators, {\it Positivity} 23 (2019), 
367--377.

\bibitem{P21a} A. Peperko, Inequalities for the spectral radius and essential spectral radius of positive operators on Banach sequence spaces, Positivity (2021),  https://doi.org/10.1007/s11117-021-00833-6 .

\bibitem{RLP18} A. Rosenmann, F. Lehner, A. Peperko,  Polynomial convolutions in max-plus algebra, {\it Linear Algebra Appl.} 578 (2019), 370--401.


\bibitem{Sch11} A.R. Schep, Bounds on the spectral radius of Hadamard products of positive operators on $l_p$-spaces,  {\it Electronic J. Linear Algebra} 22, 
(2011), 443--447.

\bibitem{Sch11b} A.R. Schep, Corrigendum for "Bounds on the spectral radius of Hadamard products of positive operators on $l_p$-spaces", (2011), {\it preprint
at Research gate}: Corrigendum-Hadamard    

\bibitem{Sch86} A.J. Schwenk, Tight bounds on the spectral radius of asymmetric nonnegative
        matrices, {\it Linear Algebra Appl.} 75 (1986), 257--265.





\bibitem{H07}  S.-Q. Shen, T.-Z. Huang, Several inequalities for the largest singular value and the spectral radius of matrices, {\it Math. Inequal. Appl.} 10 (4) (2007), 713--722.


%
%
%
%
%
%
%


\bibitem{W75} A.S. Wagh, Green function theory of dynamic conductivity, {\it Physica A: Statistical Mechanics and its Applications} 81 (1975), 369--390. 


\bibitem{Za83} A.C. Zaanen,
    {\it Riesz spaces II}, North Holland, Amsterdam, 1983.


\bibitem{Z18} Y. Zhang, Some spectral norm inequalities on Hadamard products of nonnegative matrices, {\it Linear Algebra Appl.} 556 (2018), 162--170.


\end{thebibliography}
\end{document}